\theoremstyle{plain}
\newtheorem{theorem}{Theorem}[section]
\newtheorem{lemma}[theorem]{Lemma}
\newtheorem{proposition}[theorem]{Proposition}
\theoremstyle{definition}
\newcommand{\bbR}{\mathbb{R}}
\newcommand{\bbP}{\mathbb{P}}
\newcommand{\bbE}{\mathbb{E}}
\newcommand{\ud}{\mathrm{d}}
\numberwithin{equation}{section}
\title{Stable random walks in cones}
\author[W.\ Cygan]{Wojciech Cygan}
\address[Wojciech Cygan]{
University of Wroc\l{}aw\\
Institute of Mathematics\\
pl.\ Grunwaldzki 2/4\\ 50--384 Wroc\l{}aw, Poland}
\email{wojciech.cygan@uwr.edu.pl}
\author[D.\ Denisov]{Denis Denisov}
\address[Denis Denisov]{Department of Mathematics\\
University of Manchester\\
Manchester, UK}
\email{denis.denisov@manchester.ac.uk}
\author{Zbigniew Palmowski}
\address[Zbigniew Palmowski]{Department of Applied Mathematics\\
Wroc\l{a}w University of Science and Technology\\
Wroc\l{a}aw, Poland}
\email{zbigniew.palmowski@gmail.com}
\author{Vitali Wachtel}
\address[Vitali Wachtel]{Faculty of Mathematics\\
Bielefeld University\\
Bielefeld, Germany}
\email{wachtel@math.uni-bielefeld.de}
\subjclass[2010]{
Primary 60G50; secondary 60G40, 60F17}
\keywords{Random walk, exit time, harmonic function, conditioned process, quasi-stationary distribution}
 \thanks{
    D. Denisov was supported by a Leverhulme Trust Research Project Grant  RPG-2021-105.
    Z. Palmowski acknowledges that the research is partially supported by Polish National Science Centre Grant No. 2023/51/B/ST1/01270.
        V. Wachtel was supported by the Deutsche Forschungsgemeinschaft (DFG, German Research
Foundation)—Project ID 317210226—SFB 1283.
    }
\begin{document}
\allowdisplaybreaks[4]

\begin{abstract}
In this paper we consider a multidimensional random walk killed on  leaving a right circular cone
with a distribution of increments belonging to the normal domain of attraction of
an $\alpha$-stable and rotationally-invariant law with $\alpha \in (0,2)\setminus \{1\}$.
Based on  \cite{MR3771748} describing the tail behaviour of the exit time of $\alpha$-stable process from a cone and using some properties of Martin kernel of the isotropic $\alpha$-stable process, in this paper
we construct a positive harmonic function of the discrete time random walk under consideration. Then we
find the asymptotic tail of the distribution of the exit
time of this random walk from the cone. We also prove the corresponding conditional
functional limit theorem.
\end{abstract}

\maketitle

\section{Introduction}
\subsection{Main results}
It is known that fluctuation theory serves as a powerful machinery to analyse and understand the asymptotic behaviour of one-dimensional random walks. Accompanied by the distinguished Doob $h$-transform method this approach attracted much of attention over few last decades and allowed one to study random walls conditioned to stay positive. Most relevant for the current work are the papers concerned with invariance principles, such as \cite{Iglehart}, \cite{Bolthausen}, \cite{Doney-conditional} and \cite{Caravenna}. As the positive half-line is a prototype of a \textit{cone} in one dimension, it was a natural further research task to focus on random walks conditioned to stay in a cone, or other unbounded domains in higher dimensions. Since fluctuation identities are not applicable in the multidimensional setting, there was need for a novel approach, which was recently developed in  \cite{MR3342657} for random walks having finite second (or higher) moments. In the present work we aim to understand the asymptotic behaviour of random walks that have infinite second moments and are constrained to stay in a circular cone.

More precisely,
we consider a random walk $\{S(n):\, n\ge1 \}$ in $\bbR^d$ of the form
\begin{align}\label{RW}
S(n) = X(1)+\ldots + X(n),
\end{align}
where $X(1), X(2),\ldots$ are independent copies of a generic random vector $X$
with
the absolutely continuous law with the density $p_X(y)$.
We assume that $X$ belongs to the normal domain of attraction of
an $\alpha$-stable and rotationally-invariant law $p_Z(y)\ud y$ in $\bbR^d$, for $\alpha \in (0,2)\setminus \{1\}$.
That is, the function $p_Z(y)$ is density of $Z(1)$ for the isotropic $\alpha$-stable process $\{Z(t): t\ge0\}$.

The assumption that $X$ belongs to the normal domain of attraction means that a properly rescaled walk converges in distribution towards the process $Z$, that is,
\begin{equation}
\label{eq:weak-conv}
\frac{S(nt)}{n^{1/\alpha}}\Rightarrow Z(t).
\end{equation}
In particular, since there is no centering in relation \eqref{eq:weak-conv}, we have to assume that $\bbE X(1) = \bbE Z(1) =0$ in the case when $\alpha>1$.
We additionally assume  that for some $\nu>0$,
\begin{align}\label{eq:Ass-densities-diff}
\left\vert p_X(y)-p_Z(y)\right\vert \le C \frac{1}{(1+|y|)^{d+\alpha +\nu}},\quad y\in \bbR^d.
\end{align}
For an angle $\theta \in (0,\pi)$ we denote by $K= \{ x\in \bbR^d : \measuredangle{(x,e_d)}<\theta \}$ the \textit{right circular cone} of aperture $\theta$, where $e_d=(0,0\ldots ,0,1)$.
Let
\begin{align}\label{exit-time-cone}
\tau_x = \inf \{ n\ge1: x + S(n) \notin K \}
\end{align}
be the first exit time from a cone $K$ of the random walk $x+S(n)$, which starts at $x\in K$.

We investigate the asymptotic behaviour of
$\bbP(\tau_x>n) $
and the random walk conditioned to stay in cone $K$.
There is a considerable amount of research on the asymptotic behaviour of  random walks with
\emph{finite second moments} in cones and other Lipszhitz domains
which we  briefly revise in Section~\ref{sec:lit_rev} below.
To the best of our knowledge, the present article is the first work where the
asymptotic behaviour of multidimensional random walks
with infinite variance in unbounded domains is analysed.
The methodology that we have developed to treat the random walks in the domain of attraction
of $\alpha$-stable processes is quite general and we
hope that it will be applicable in a number of other situations.
One of the difficulties of the $\alpha$-stable case is that
one cannot assume existence of additional moments as it is usually done in the finite variance case.
Instead, we enforce the local (closeness) assumption displayed at \eqref{eq:Ass-densities-diff}, cf.\ also Subsection~\ref{rem:local}.

A key object in our investigations is the so-called Martin kernel with pole at infinity of the cone $K$ related to the isotropic $\alpha$-stable process $Z(t)$ in $\bbR^d$.
Its existence, uniqueness and main properties were investigated in the seminal paper \cite[Theorem 3.2]{Banuelos-Bogdan-2004} where among other things it was proved that there exists a unique function $M\colon \bbR^d \to [0,\infty )$ such that $M\equiv 0$ on $K^c$ and, for any open and bounded set $B\subset K$,
\begin{align}\label{M-harm}
M(x) = \bbE_x [M(Z(\tau_B^Z))],
\end{align}
where $\tau_B^Z$ is the first exit time from $B$ of the $\alpha$-stable process $Z(t)$.
Identity \eqref{M-harm} means that $M$ is $\alpha$-harmonic in $K$.
The function $M$ is locally bounded and homogeneous of order $\beta = \beta (K, \alpha) \ge0$, that is,
\begin{align}\label{M-homog}
M(x) = |x|^\beta M(x/|x|),\quad x\neq 0.
\end{align}
The number $\beta$ is called \textit{the index of the cone} $K$. We have $\beta =0$ if $K^c$ is a polar set for $Z(t)$ and $0<\beta <\alpha$ otherwise.

We shall first formulate the two main results of the article and only later we will discuss their connections with other existing related results that appeared in the literature.
Our first result concerns the asymptotic behaviour of the tails of the exit time $\tau_x$.
We will write $h(n)\sim g(n)$, as $n\to \infty$, if $\lim_{n\to\infty} h(n)/g(n)=1$.
\begin{theorem}\label{mainthm1}
Assume that the generic increment $X$ of the random walk $S(n)$
has an absolutely continuous law with the density belonging to the domain of attraction of
an $\alpha$-stable and rotationally-invariant law for $\alpha \in (0,2)\setminus \{1\}$.
Assume also that \eqref{eq:Ass-densities-diff} holds true
and that $\bbE X =0$ if $\alpha >1$.
Then
the function
\begin{equation}\label{harmonicfunction0}
V(x)=\lim_{n\to\infty}\bbE[M(x+S(n));\tau_x>n]\end{equation}
is well-defined and is harmonic
for the walk $S(n)$ killed at exiting from the circular cone $K$, that is,
$$
V(x)=\bbE[V(x+S(1));\tau_x>1],\quad x\in K.
$$
Furthermore, for every $x\in K$,
\begin{align}\label{eq:tau-asymp-main}
\bbP(\tau_x>n)\sim \varkappa\frac{V(x)}{n^{\beta/\alpha}},\quad \mathrm{as}\ n\to \infty,
\end{align}
for some constant $\varkappa>0$.
\end{theorem}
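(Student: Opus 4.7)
The plan is to follow the program of Denisov--Wachtel \cite{MR3342657}, developed there for random walks with finite variance in Weyl chambers, with the $\alpha$-harmonic Martin kernel $M$ taking the role of their polynomial harmonic function. Concretely, I will set
$V_n(x):=\bbE[M(x+S(n));\tau_x>n]$
and show in sequence that the limit $V(x):=\lim_{n\to\infty}V_n(x)$ exists and is finite, that it is harmonic for the killed walk, and that the tail asymptotic \eqref{eq:tau-asymp-main} follows by coupling the killed walk at late times with the killed isotropic stable process and invoking the sharp asymptotic $\bbP_y(\tau^Z>t)\sim\varkappa M(y)/t^{\beta/\alpha}$ established in \cite{MR3771748}.

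The existence of $V$ is the technical core and reduces to summability of the one-step increments. By the Markov property,
$V_{n+1}(x)-V_n(x)=\bbE\bigl[(TM-M)(x+S(n))\Ind_{\tau_x>n}\bigr]$,
where $TM(y):=\bbE[M(y+X);\,y+X\in K]$. The $\alpha$-harmonicity identity \eqref{M-harm} together with $M\equiv0$ on $K^c$ upgrade, via Dynkin's formula, to the semigroup relation $\bbE_y[M(Z(1));\tau^Z>1]=M(y)$ for the stable process. Comparing $T$ with its stable counterpart, the defect $(TM-M)(y)$ decomposes into (i) a "re-entry" term, arising from stable paths that leave and return to $K$ within a unit of time, which a two-big-jump estimate bounds by $C|y|^{\beta-2\alpha}$ when $y$ sits at distance $\asymp|y|$ from $\partial K$; and (ii) a density-difference term, controlled by \eqref{eq:Ass-densities-diff}, which after a Taylor-type cancellation exploiting the centering $\bbE X=0$ in the regime $\alpha>1$ is of order $C|y|^{\beta-\alpha-\nu'}$ for some $\nu'>0$. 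Localizing the expectation to the bulk event $\{|x+S(n)|\asymp n^{1/\alpha},\ \mathrm{dist}(x+S(n),\partial K)\asymp n^{1/\alpha}\}$ — whose complement is controlled by the functional convergence \eqref{eq:weak-conv} and known boundary decay for the killed stable process — then produces $|V_{n+1}(x)-V_n(x)|\le C(x)\,n^{-1-\delta}$ for some $\delta>0$, so the limit exists.

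Given $V$, the harmonicity $V(x)=\bbE[V(x+X);\tau_x>1]$ follows by passing to the limit in the one-step Markov identity $V_{n+1}(x)=\bbE[V_n(x+X);\tau_x>1]$ through dominated convergence, using the increment bound above as a majorant. Strict positivity is obtained by exhibiting, for each $x\in K$, an event of positive probability on which the walk penetrates deeply into $K$ and $M(x+S(n))$ is of order $n^{\beta/\alpha}$. Finally, for the tail asymptotic I apply the Markov property at $m=\lfloor n^{1-\varepsilon}\rfloor$ to write
$\bbP(\tau_x>n)=\bbE\bigl[\bbP_{x+S(m)}(\tau>n-m);\tau_x>m\bigr]$
and approximate the killed walk over the remaining $n-m$ steps by the killed $\alpha$-stable process on the same time horizon. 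On the bulk event the inner probability becomes $\varkappa M(x+S(m))/n^{\beta/\alpha}(1+o(1))$ by \cite{MR3771748}, so the right-hand side equals $(1+o(1))\varkappa V_m(x)/n^{\beta/\alpha}\to \varkappa V(x)/n^{\beta/\alpha}$. The principal obstacle is this final substitution: what is needed is a local-limit-type comparison of the killed-walk and killed-stable transition kernels, uniform enough in the starting point to legitimately swap probabilities inside the expectation, and it is precisely here that the quantitative density hypothesis \eqref{eq:Ass-densities-diff} — strictly stronger than weak convergence — becomes indispensable.
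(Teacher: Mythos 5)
Your high-level skeleton (define $V_n(x)=\bbE[M(x+S(n));\tau_x>n]$, prove convergence by controlling the one-step defect $f=TM-M$, then apply the Markov property at an intermediate time $m$ and import the stable-process tail from \cite{MR3771748}) does match the paper's strategy, but the technical core of your convergence argument has a genuine gap. You propose to show $|V_{n+1}(x)-V_n(x)|\le C(x)n^{-1-\delta}$ by bounding $f$ in the bulk and dismissing the complement of the event $\{|x+S(n)|\asymp n^{1/\alpha},\ \delta(x+S(n))\asymp n^{1/\alpha}\}$ via ``functional convergence and known boundary decay''. This does not work: the weak convergence \eqref{eq:weak-conv} gives no quantitative control of $\bbP(\tau_x>n,\ \delta(x+S(n))\le r)$ at the scales $r=O(1)$ or $r=o(n^{1/\alpha})$ where the error bound $|f(y)|\le CM(y)/\delta(y)^{\alpha+\varepsilon}$ (which moreover is only available for $\delta(y)\ge 1$) ceases to be small relative to $M(y)$, and near the boundary $|f|$ can be of the same order as $M\asymp n^{(\beta-\alpha/2)/\alpha}$. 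The paper does not attempt a term-by-term bound in $n$ at all; instead it constructs the compensator $U_\Lambda(x)=\int_K G_K(x,y)\Lambda(y)\,\ud y$ with $\Lambda=M/(1+\delta)^{\alpha+\varepsilon/2}$, proves (Lemmas \ref{lem:U_Lambda-o-small-of-M}--\ref{supermartingale1}) that $W=M+U_\Lambda$ is superharmonic for the killed walk with strictly negative defect $\le -c_0\Lambda$, and deduces the occupation bound $\bbE\bigl[\sum_{k<\tau_x}\Lambda(x+Rx_0+S(k))\bigr]\le CW(x+Rx_0)$; absolute summability of the error along the whole path, not a pointwise rate, is what makes the martingale $L_n=M(x+Rx_0+S(n\wedge\tau_x))-\sum_{k<n\wedge\tau_x}f(\cdot)$ usable and yields existence of the (shifted) limit $V_R$. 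The shift by $Rx_0$, forced by the restriction $\delta\ge1$ in all estimates, and the later identification that $V_R$ does not depend on $R$, are also missing from your outline.

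For the tail asymptotics your proposal stops exactly where the work is: you flag the substitution of the killed-stable tail for the killed-walk tail as ``the principal obstacle'' and suggest a local-limit-type comparison of killed kernels is needed. The paper needs no such comparison. It splits the position at time $m$ into three regions ($\delta\le\varepsilon m^{1/\alpha}$, bulk, and $|z|>Am^{1/\alpha}$); on the bulk the starting points are macroscopically inside the cone, so ordinary weak convergence plus Theorem 3.1 of \cite{MR3771748} give $\bbP(\tau_z>n-m)\sim\varkappa M(z)n^{-\beta/\alpha}$ uniformly, while the boundary and far regions are controlled by the a priori upper bound $\bbP(\tau_x>n)\le CW(x+Rx_0)n^{-\beta/\alpha}$ (Lemma \ref{lem:UB}, proved by a recursive halving argument) and the truncated-expectation bound of Lemma \ref{lem:ETail} --- both of which again rest on the supermartingale built from $W$. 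So without the compensator construction you lack not only the convergence of $V_n$ but also the tools to control the non-bulk regions in the final step; supplying these is the substance of the proof rather than a technicality.
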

In fact, the constant $\varkappa$ from \eqref{eq:tau-asymp-main} is the same as the constant given in equation (3.3) of \cite{MR3771748} and it is determined in terms of the Kelvin transform of the Martin kernel $M$.
To be more precise, we recall the definition of
the heat kernel of the process $Z(t)$ killed  on leaving cone $K$ which is given by
\begin{align}\label{eq:killed-heat-kernel}
p_K (t,x,y) = p_Z(t,x,y) - \bbE \big[p_Z(t-\tau_K, Z(\tau_K), y);\, \tau_K^Z <t\big],\quad x,y\in \mathbb{R}^d,\ t>0,
\end{align}
where $\tau_K^Z$ is the first exit time of the process $Z(t)$ from $K$.
The related killed Green function is defined as
\begin{align}\label{eq:killed-Green-stable}
G_K (x,y) = \int_0^\infty p_K (t,x,y)\, \ud t.
\end{align}
The constant $\varkappa$ is then computed as follows
\begin{align*}
\varkappa = \left( \lim_{K \ni x\to 0}
\frac{G_K (x,e_d)}{M(x)} \right)
 \int_K \int_K
|y|^{\alpha -d} M(y/|y|^2) p_K (1,y,z)\nu_K (z)\, \ud z\, \ud y,
\end{align*}
where $e_d=(0,\ldots ,0,1)\in \bbR^d$; $\nu_K (z) = \int_{K^c}\nu (z-y)\, \ud y,$ and $\nu (y) $ stays for the density of the L\'{e}vy jump measure of the process $Z(t)$.

The second main result concerns the conditional
functional limit theorem for the random walk conditioned to stay in cone $K$.

\begin{theorem}\label{mainthm2}
Under assumptions of Theorem \ref{mainthm1}, the following stochastic process
\[
\left(\frac{x+S(nt)}{n^{1/\alpha}}\, \Big| \, \tau_x>n\right)_{t\in [0,1]}
\]
converges weakly in the Skorohod space $D[0,1]$ to a process $me_K(t)$ which we shall call the meander of the $\alpha$-stable process $Z(t)$ in the cone K.
\end{theorem}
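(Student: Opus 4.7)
The proof would follow the classical two-step scheme for invariance principles of conditioned walks: finite-dimensional convergence followed by tightness in $D[0,1]$. Because $\beta>0$ and $Z$ is self-similar one has $\bbP_0(\tau^Z_K>1)=0$, so the meander $me_K$ must be built as a limit. The natural construction is the Doob $h$-transform of $Z$ killed on leaving $K$ with respect to the space--time harmonic function $(s,y)\mapsto \bbP_y(\tau^Z_K>1-s)$, started from $0$; using the tail asymptotic $\bbP_y(\tau^Z_K>s)\sim c\,M(y)/s^{\beta/\alpha}$ established in \cite{MR3771748}, the finite-dimensional laws of $me_K$ admit an explicit description in terms of the killed stable heat kernel $p_K$ from \eqref{eq:killed-heat-kernel} and the Martin kernel $M$.

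\textbf{Finite-dimensional convergence.} Fix $0<t_1<\cdots<t_k<1$ and a bounded continuous $f\colon(\bbR^d)^k\to\bbR$. Applying the Markov property of $S$ at time $\lfloor nt_k\rfloor$ one obtains
\[
\bbE_x\!\left[f\!\left(\tfrac{x+S(\lfloor nt_1\rfloor)}{n^{1/\alpha}},\ldots,\tfrac{x+S(\lfloor nt_k\rfloor)}{n^{1/\alpha}}\right);\tau_x>n\right]
=\bbE_x\!\left[f\cdot g_n(x+S(\lfloor nt_k\rfloor));\tau_x>\lfloor nt_k\rfloor\right],
\]
where $g_n(y):=\bbP_y(\tau_y>n-\lfloor nt_k\rfloor)$. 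Inserting the tail asymptotic $\bbP_y(\tau_y>m)\sim\varkappa V(y)/m^{\beta/\alpha}$ from Theorem~\ref{mainthm1}, using the homogeneity $M(y)=n^{\beta/\alpha}M(y/n^{1/\alpha})$, and the asymptotic identification $V(y)\sim M(y)$ as $|y|\to\infty$ with $y/|y|$ in any compact subset of $K\cap\bbS^{d-1}$ (which should be verified as an auxiliary step from the definition \eqref{harmonicfunction0} of $V$), one can pass to the limit via the unconditioned weak convergence \eqref{eq:weak-conv} and then divide by $\bbP(\tau_x>n)\sim\varkappa V(x)/n^{\beta/\alpha}$. The resulting expression
\[
\frac{(1-t_k)^{-\beta/\alpha}}{V(x)}\,\bbE_0\!\left[f(Z(t_1),\ldots,Z(t_k))\,M(Z(t_k));\tau^Z_K>t_k\right]
\]
matches the corresponding finite-dimensional distribution of $me_K$ as defined above.

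\textbf{Tightness and main obstacle.} On any interval $[0,1-\varepsilon]$, the same Markov decomposition shows that the conditional law of $n^{-1/\alpha}(x+S(\lfloor n\cdot\rfloor))$ is absolutely continuous with respect to the unconditioned law, with density proportional to $g_n(x+S(\lfloor n(1-\varepsilon)\rfloor))\Ind_{\{\tau_x>\lfloor n(1-\varepsilon)\rfloor\}}$; combining the tail asymptotic and the relation $V\sim M$ bounds this density by a quantity of the form $M(y/n^{1/\alpha})$ evaluated at the rescaled endpoint, so tightness on $[0,1-\varepsilon]$ can be transferred from the Donsker--Skorohod tightness of the unconditioned walk by a standard truncation of $M$. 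The short interval $[1-\varepsilon,1]$ is handled by an Aldous-type modulus-of-continuity estimate: applying the Markov property at $\lfloor n(1-\varepsilon)\rfloor$ and the tail asymptotic, the conditional probability of an oscillation larger than a fixed $\delta$ inside $[1-\varepsilon,1]$ is of order $\varepsilon^{\beta/\alpha}$, which vanishes as $\varepsilon\to 0$. I expect the most delicate point to be precisely this analysis near the terminal time: conditioning on $\{\tau_x>n\}$ could in principle create atypically large jumps close to time $n$, and these must be ruled out quantitatively using the two-sided heat-kernel estimates for $p_K$ inherited from \cite{MR3771748}. A secondary but essential technicality is upgrading the pointwise tail asymptotic of Theorem~\ref{mainthm1} to one that is uniform in the starting point $y$ such that $y/m^{1/\alpha}$ lies in a prescribed compact subset of $K$, as is needed to substitute it inside an expectation against the weakly converging unconditioned walk.
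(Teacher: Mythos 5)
Your finite-dimensional step contains a genuine error, not just a missing technicality. After applying the Markov property at time $\lfloor nt_k\rfloor$, you substitute $\bbP_y(\tau_y>m)\sim\varkappa V(y)m^{-\beta/\alpha}$ (and $V(y)\sim M(y)$) at $y=x+S(\lfloor nt_k\rfloor)$, where $|y|\asymp n^{1/\alpha}$ and the remaining time is $n(1-t_k)\asymp n$. The asymptotics of Theorem \ref{mainthm1}, and its continuous analogue $\bbP_w(\tau^Z_K>1)\sim\varkappa M(w)$ from \cite{MR3771748}, are valid only when the starting point is small relative to the time scale, i.e.\ $|y|=o\big(m^{1/\alpha}\big)$; in the regime you use them the survival probability is a nondegenerate function of $y/(n(1-t_k))^{1/\alpha}$ and is \emph{not} proportional to $M(y)$. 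Consequently the displayed limit is not the meander: weighting the endpoint by $M(Z(t_k))$ produces the $M$-Doob transform (the process conditioned to stay in $K$ forever), whereas the meander weights by $y\mapsto\bbP_y(\tau^Z_K>1-t_k)$. As written the formula is even degenerate, because the vertex is regular for $K^c$, so $\bbP_0(\tau^Z_K>t_k)=0$ and $\bbE_0[f\,M(Z(t_k));\tau^Z_K>t_k]=0$; any correct description must go through the entrance law $n_t(y)=\lim_{x\to0}p_K(t,x,y)/\bbP_x(\tau^Z_K>t)$. Also, the auxiliary claims you lean on — $V\sim M$ at infinity, and a uniform version of the discrete tail asymptotics for $|y|\asymp m^{1/\alpha}$ — are respectively unproved and false in the form needed. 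The paper avoids all of this by splitting at an intermediate time $m=m(n)$ with $m/n\to0$ (the same $m$ as in Section \ref{sec:asymptoticstau}), discarding the regions $D_1,D_3$ via \eqref{eq:asymp2}--\eqref{eq:asymp3}, and on $D_2$ using the unconditioned functional limit theorem to replace the walk by $Z$ started at $zn^{-1/\alpha}\to0$, where Subsection \ref{existencemeander} applies: the convergence of $\bbP_w(\,\cdot\mid\tau^Z_K>1)$ as $w\to0$ is itself nontrivial and is built there from the Doob-transform convergence of \cite{MR4415390} together with the entrance law of \cite{MR3771748}. Your proposal simply posits the existence of the meander as an $h$-transform "started from $0$", but that existence is precisely what has to be proved.

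Your tightness discussion is also aimed at the wrong end of the interval. Near $t=1$ there is no special difficulty: the conditioning event is measurable at time $n$, and convergence on every $D[\eta,1]$ already covers it. The delicate point is $t=0$, where the limit process starts at the vertex and the conditioning could in principle push the path to scale $\gg n^{1/\alpha}$ immediately; the paper controls this by the quantitative bound \eqref{eq:flt5}, $\bbP\big(\max_{k\le n}|x+S(k)|>An^{1/\alpha}\mid\tau_x>n\big)\le CA^{\beta-\alpha}$, deduced from Lemmas \ref{lem:UB} and \ref{lem:ETail}. Your "density domination by $M$ plus truncation" on $[0,1-\varepsilon]$ would require exactly such supermartingale and maximal estimates (the density is unbounded since $M$ is), and none are supplied.
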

The existence of the limiting process is 'almost known' in the literature. More precisely, one knows from \cite[Theorem 3.3]{MR4415390} that the Doob $h$-transform  of $Z$ killed at leaving $K$ has a weak limit as the starting point $x$ converges to zero. Combining this with the entrance law obtained in \cite{MR3771748}, one can easily obtain the existence of the meander. We perform the needed calculations in Subsection \ref{existencemeander} preceding the proof of Theorem~\ref{mainthm2}.

\subsection{Comments on the assumptions and the method}\label{rem:local}
    Assumption \eqref{eq:Ass-densities-diff} on the closeness between the densities should be understood as a control of the rate of convergence in \eqref{eq:weak-conv}. In the case of random walks with finite variance one can quantify the rate of convergence towards the Brownian motion by assuming finiteness of higher moments. Since this is not possible in the case $\alpha<2$, we impose \eqref{eq:Ass-densities-diff}. Assuming some closeness between distributions of $X$ and $Z(1)$ is rather standard in the area of asymptotically stable random
    walks, see e.g.\ the book by Christoph and Wolf~\cite{MR1202035}.
    Assumptions about the local behaviour
    of densities are usually imposed
    while analysing the behaviour of
    the Green function in the infinite variance case, see for example
    ~\cite[Theorem 4]{DenisovWachtel24}.
    In fact, if these assumptions are not in place, the behaviour of the Green function
    can be quite irregular as demonstrated
    in~\cite{Williamson68}.
    Since our method is based on analysis of harmonic functions and
    the Green function of the process $Z$, we had to assume that densities are close to each other.
    It seems reasonable to expect that the geometric assumption that
    the cone is right circular can be relaxed.
    We have imposed this assumption since  the estimates for the killed Green function of the $\alpha$-stable processes and for the Martin kernel $M$
    that are currently available in the literature
    are proved only for such cones.
    It is thus justified to forebode that these estimates should  hold in more general cones. We decided, however, not to consider this question in the present paper to keep it less technical
and to concentrate on the discrete-time framework.

Our  strategy stems from the asymptotics~\eqref{eq:tau-asymp-main},
which factorises in the product of $V(x)$ and $n^{-\beta/\alpha}$.
We thus find first positive harmonic function
$V(x)$  and then use it to  study the tails of exit times and conditional
limit theorems.
Similar strategy
was used in~\cite{MR4718377}, where firstly harmonic function
was found differently using recursive  estimates and repulsion from boundaries
and secondly repulsion from boundaries with KMT (Koml\'{o}s-Major-Tusn\'{a}dy)
coupling
were used to transfers asymptotic results for Brownian motion to random walks.

For random walks with bounded increments in~\cite{MR1643806} firstly a
related approach
was used to find suitable sub/superharmonic functions and then
secondly machinery of Dirichlet forms was used
together with these functions
to obtain estimates for the tail probabilities $\bbP(\tau_x>n)$.

We give now a high level description of the methodology used in the paper.
In the first step we prove existence of harmonic function.
For that we start off with bounding  the  error function
$$f(y)=\bbE[M(y+X)]-M(y)$$ with
$\frac{\Lambda(x)}{(1+\delta(x))^{\varepsilon/2}}$, where $\delta(x):={\rm dist}(x,\partial K)$ and 
\begin{align*}
    \Lambda (x) = \frac{M(x)}{(1+\delta (x))^{\alpha +\varepsilon/2}}.
\end{align*}
We then introduce an error compensator
\begin{align}\label{ul}
    U_\Lambda (x) = \int_K G_K(x,y)\Lambda(y)\ud y,
\end{align}
where the killed Green function $G_K(x,y)$ is given in \eqref{eq:killed-Green-stable}.
This allows us to construct the following superharmonic function
\begin{equation*}
    W(x)=M(x)+U_\Lambda(x).
    \end{equation*}
As a result, we
prove that for sufficiently large $R$ and some $x_0 \in K$,
\begin{align*}
Y_n^{(c)}
&=W(x+Rx_0+S(n))
{\rm 1}\{\tau_x>n\}\\
&\hspace{3cm}
+c\sum_{k=0}^{n-1}\Lambda(x+Rx_0+S(k)){\rm 1}\{\tau_x>k\},
\end{align*}
is a supermartingale.
Since the error function $f$ is bounded from above by $\Lambda$,
the existence of the harmonic function $V(x)$ is a consequence of the martinagle property of
$$
L_n=M(x+Rx_0+S(n\wedge\tau_x))
-\sum_{k=0}^{n\wedge\tau_x-1}f(x+Rx_0+S(k)),\quad n\ge0,
$$
the optional stopping theorem and the dominated convergence theorem,
together with bounds that follow from the supermartingale $Y_n^{(c)}$.

In the second step we then proceed to study of tails and conditional limit theorems.
Here, we first obtain sharp upper bound for $\bbP (\tau_x>n)$
and then probabilities of large deviations by using recursive
bounds. These bounds together with functional central limit theorems
deliver the final results.

In our proof the main
difficulty is in proving the supermartingale property of $Y_n^{(c)}$.
To do so we have to show completely new estimates in the context
of the $\alpha$-stable processes and random walks with increments
distribution belonging to the $\alpha$-stable law with $\alpha \in (0,2)\setminus \{1\}$.
In particular, in Lemma \ref{lem:error-estimate} we find the estimates of the error function $f$
and in Lemma \ref{lem:Green-bounds-our} we identify the
upper bound of the killed Green function $G_K(x,y)$.

The main novelty and difficulty are in showing the estimates related
to $U_\Lambda (x)$ given in \eqref{ul}. In particular, a completely new approach is required in finding
the proper upper bound for the gradient of $U_\Lambda(x)$ (see Lemma \ref{gradientestimate}),
for $|U_\Lambda (x+y) - U_\Lambda (x)|$
if $0<\alpha <1$ (see Lemma \ref{lem:U-Lambda-diff-est})
and for $|U_\Lambda (x+y) - U_\Lambda (x) -\nabla U_\Lambda (x)\cdot y|$
if $1< \alpha <2$ (see Lemma \ref{lem:U_Lambda-diff-alpha>1}).

We also proved some other facts that are of own interest, like the existence of the meander $m_K(t)$ (see Theorem \ref{mainthm2}).

\subsection{Literature overview}\label{sec:lit_rev}
The asymptotic behaviour of random walks  with finite variance
in cones  has been
extensively studied in the literature.
The most general result was obtained by
Denisov and Wachtel \cite{MR4718377}
who proved counterparts of Theorems \ref{mainthm1} and \ref{mainthm2}
under additional moment condition $\bbE [|X|^2\log(1+|X|)]<\infty$ which evidently does not fit the scope of the present article.
See also \cite{MR3342657, DenisovWachtel19} for similar result but proved using different method under stronger moment conditions.
In these studies an important role
was played by the KMT coupling of Brownian motion
and random walks with finite variance.
This tool is not available in the infinite-variance case, which required a new approach.
Markov chains in cones have been studied recently in~\cite{DenisovZhang23},
where also Functional Central Limit Theorem was used instead of the KMT coupling.
Raschel and Tarrago in \cite{MR4243159} also studied the asymptotic behaviour of zero-drift random walks
confined to multidimensional convex cones but they focused on the case when the endpoint is close to the boundary and
they derived a local limit theorem in the fluctuation regime.
Invariance principle was obtained in \cite{MR4102254}, which corresponds to our Theorem \ref{mainthm2}, but in this case a Brownian meander is appearing in the limit.
See also \cite{MR4443298} for the asymptotic behaviour of the Green function $G(x,y)$ of the killed random walk on exiting from the cone where $y$ tends to infinity in different ways.

The research question posed in this paper has gained lots of attention in various related contexts.
The asymptotic tail of the distribution of the exit time from a cone has been studied first for Brownian motion,  see \cite{MR0863716, MR1465162}.
For one dimensional L\'evy processes and half-line it has been studied in \cite{MR1303922, MR2248228}. For the result concerning one-dimensional
self-similar processes and half-line see \cite{MR2971725}.
The multivariate case of $\alpha$-stable processes has been in analysed in \cite{MR3771748, MR4415390}.
One can derive similar result under additional assumption that multidimensional stochastic process has a skew-product structure; see \cite{MR4329614} for details.

Strongly related with the results concerning the continuous-time stochastic processes are discrete-time counterparts.
For more general cones though and essentially for random
walks with increments having a finite support, Varopoulos in \cite{MR1643806} gave lower and
upper bounds for the probability $\bbP(\tau_x>n)$.
Behaviour of the Green function of asymptotically stable random walk
on a half-space has been recently considered in~\cite{DenisovWachtel24}.

Weak convergence of a normalized two-dimensional random walk conditioned to stay in a cone
to the analogous conditional distribution of a two-dimensional Wiener process was established in \cite{MR1137264}.
In \cite{MR2674995} a complete representation of the Martin boundary of killed random walks on the quadrant $\mathbb{N}^*\times \mathbb{N}^*$ was derived.

Random walks with drift have been studied in  \cite{MR3163211, MR3283611, MR3512425}.
Different approach is taken in \cite{MR4636896}, where for a random walk having integrable increments with drift in the interior of the closed cone, the asymptotics of $\bbP(\tau_x>n)$ is derived. It is in the form
of the sum of survival probability and a certain sequence $\rho^n B_n$ for some $\rho\in (0,1)$ and $B_n \to 0$ satisfying $B_n^{1/n}\to 1$ as $n\to+\infty$.

The Weyl chamber of the form $K=\{x\in \mathbb{R}^d: x_1<x_2<\ldots<x_d\}$ is a specific cone that
attracted a lot of attention.
In this case there are many papers related to the probability $\bbP(\tau_x>n)$; see for example \cite{DenisovWachtel10, MR1678525, MR2176549, MR2438702}
and references therein.

Finally, the probability  $\bbP(\tau_x>n)$ can determine so-called persistency which corresponds to identifying
the rate of decay of this probability; see \cite{DenisovWachtel10, MR3342657, MR3315616, MR3780696, MR3468226} for details and other related references.

\subsection{Organisation of the paper}
The paper is organised as follows.
In Section \ref{sec:prel} we collect and prove the key estimates of the Martin kernel $M(x)$, of the so-called error function $f(x) = \bbE M(x+X) - M(x)$,  and of
the killed Green function denoted by $G(x,y)$. In Section \ref{sec:supermartingale} we construct the harmonic function $V(x)$ defined in \eqref{harmonicfunction0}.
Using this result, in Section \ref{sec:asymptoticstau}, we find the asymptotic tail of the distribution of $\tau_x$, cf.\ Theorem \ref{mainthm1}.
In the closing Section \ref{sec:copnditionallimit} we display the proof of the conditional
functional limit theorem stated in Theorem \ref{mainthm2}.

\section{Preliminaries}\label{sec:prel}

\subsection{Estimates of the Martin kernel}
We recall that by \eqref{M-harm} the function $M$ is harmonic in the circular cone $K$ for the isotropic $\alpha$-stable process $Z(t)$ killed on the exiting from $K$.
Throughout this paper we assume that $\alpha \in (0,2)\setminus \{1\}$
and write $C$ or $c$ with some possible indexes to denote some constants.
By \cite[Lemma 3.3]{Michalik-2006}, we have
\begin{align}\label{eq:M-Michalik-up}
M(x)\le C  |x|^{\beta-\frac{\alpha}{2}}\delta(x)^{\alpha /2},\quad x\in K,
\end{align}
and
\begin{align}\label{eq:M-Michalik-low}
M(x)\ge c  |x|^{\beta-\frac{\alpha}{2}}\delta(x)^{\alpha /2},\quad x\in K,
\end{align}
where \[\delta (x) = \mathrm{dist}(x,\partial K).\]
Hence, there is a constant $C>0$ such that
\begin{align}\label{eq:M-beta-bound}
M(x)\le C|x|^\beta,\quad x\in \bbR^d.
\end{align}
From \eqref{eq:M-beta-bound} we can immediately conlcude the following lemma.
\begin{lemma}\label{lem:M-bounds}
It holds
\begin{align}\label{eq:M-bound-1}
|M(x+y) - M(x)|\le C \left( |x|\vee |y|\right)^\beta,\quad x,x+y\in K.
\end{align}
\end{lemma}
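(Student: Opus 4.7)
The plan is to deduce the bound directly from the global estimate \eqref{eq:M-beta-bound}, which asserts $M(z) \le C|z|^\beta$ for every $z \in \mathbb{R}^d$, combined with the nonnegativity of $M$ (which comes from the defining property that $M \colon \mathbb{R}^d \to [0,\infty)$). Since $M$ is nonnegative, the triangle inequality gives
\[
|M(x+y) - M(x)| \le M(x+y) + M(x),
\]
so it suffices to bound each term separately by a constant multiple of $(|x| \vee |y|)^\beta$.

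For the first term I would use the crude estimate $|x+y| \le |x| + |y| \le 2(|x| \vee |y|)$, so by \eqref{eq:M-beta-bound},
\[
M(x+y) \le C|x+y|^\beta \le C\, 2^\beta (|x| \vee |y|)^\beta.
\]
For the second term, \eqref{eq:M-beta-bound} directly yields $M(x) \le C|x|^\beta \le C(|x| \vee |y|)^\beta$. Summing the two gives the desired inequality with a new constant $C' = C(2^\beta + 1)$.

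This is really a one-line consequence of the homogeneity of $M$ and its upper bound; there is no genuine obstacle. The only subtle point worth flagging is that the statement is formulated for $x, x+y \in K$ but the argument does not actually need the containment hypothesis — it works verbatim for arbitrary $x, y \in \mathbb{R}^d$ because \eqref{eq:M-beta-bound} holds on all of $\mathbb{R}^d$ (with $M \equiv 0$ off $K$). Thus, no smoothness or harmonicity of $M$ is invoked; the lemma is purely a size estimate, and the containment in $K$ could be dropped if convenient later.
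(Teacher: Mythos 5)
Your proof is correct and is exactly the argument the paper intends: the lemma is stated as an immediate consequence of \eqref{eq:M-beta-bound}, using nonnegativity of $M$, the triangle inequality, and $|x+y|\le 2(|x|\vee|y|)$. Your side remark that the hypothesis $x,x+y\in K$ is not actually needed is also accurate, since $M\equiv 0$ on $K^c$ and \eqref{eq:M-beta-bound} holds on all of $\bbR^d$.
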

We will also need a slightly different estimate.
\begin{lemma}
\label{lem:M-newbound}
There exists a constant $C$ such that
$$
M(x+y)\le C\left(M(x)+|x|^{\beta-\alpha/2}|y|^{\alpha/2}\right),
\quad x,x+y\in K,\ |y|\le|x|.
$$
\end{lemma}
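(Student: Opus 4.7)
The plan is to start from the sharp two-sided Michalik bounds~\eqref{eq:M-Michalik-up}--\eqref{eq:M-Michalik-low} applied at the point $x+y$, and then split into two cases according to whether $|x+y|$ is comparable to $|x|$ or is significantly smaller (which can only happen via substantial cancellation).

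\medskip

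\textbf{Case A: $|x+y|\ge |x|/2$.} Here $|x|/2 \le |x+y| \le 2|x|$, so
\[
|x+y|^{\beta-\alpha/2}\le C\,|x|^{\beta-\alpha/2}
\]
regardless of the sign of $\beta-\alpha/2$. The distance function $\delta$ is $1$-Lipschitz, so $\delta(x+y)\le \delta(x)+|y|$, and since $\alpha/2<1$ the power function $t\mapsto t^{\alpha/2}$ is subadditive, giving
\[
\delta(x+y)^{\alpha/2}\le \delta(x)^{\alpha/2}+|y|^{\alpha/2}.
\]
Plugging into the upper bound~\eqref{eq:M-Michalik-up} at $x+y$,
\[
M(x+y)\le C\,|x|^{\beta-\alpha/2}\delta(x)^{\alpha/2}+C\,|x|^{\beta-\alpha/2}|y|^{\alpha/2},
\]
and the first term is absorbed into $C\,M(x)$ via the matching lower bound~\eqref{eq:M-Michalik-low}.

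\medskip

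\textbf{Case B: $|x+y|<|x|/2$.} Then $|y|\ge |x|-|x+y|>|x|/2$, so $|y|$ is comparable to $|x|$. Using the crude bound $M(x+y)\le C|x+y|^{\beta}\le C|x|^{\beta}$ that follows from~\eqref{eq:M-beta-bound} together with $\beta\ge 0$ and $|x+y|\le |x|$, I then factor $|x|^{\beta}=|x|^{\beta-\alpha/2}\cdot|x|^{\alpha/2}$ and absorb $|x|^{\alpha/2}\le 2^{\alpha/2}|y|^{\alpha/2}$ to conclude
\[
M(x+y)\le C\,|x|^{\beta-\alpha/2}|y|^{\alpha/2}.
\]

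\medskip

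The two cases together yield the claimed estimate. The only mildly delicate point is Case A, where the possibly negative exponent $\beta-\alpha/2$ makes the passage from $|x+y|^{\beta-\alpha/2}$ to $|x|^{\beta-\alpha/2}$ require a lower bound on $|x+y|$; this is precisely why splitting at the threshold $|x|/2$ is essential, with Case B then handled by the coarser $|x|^{\beta}$ bound. Throughout, the inequality $\alpha/2<1$ (since $\alpha<2$) is what makes the subadditivity of $t^{\alpha/2}$ available, which is the technical ingredient allowing the extra $|y|^{\alpha/2}$ term to appear cleanly.
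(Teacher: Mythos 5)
Your proof is correct, and it follows the same basic route as the paper: apply the two-sided Michalik bounds \eqref{eq:M-Michalik-up}--\eqref{eq:M-Michalik-low} at $x+y$, use the $1$-Lipschitz property of $\delta$ together with subadditivity of $t\mapsto t^{\alpha/2}$, and absorb the leading term back into $M(x)$ via the lower bound. The one place where you diverge is in how the factor $|x+y|^{\beta-\alpha/2}$ is compared with $|x|^{\beta-\alpha/2}$: the paper writes $|x+y|^{\beta-\alpha/2}\le |x|^{\beta-\alpha/2}+|y|^{\beta-\alpha/2}\le 2|x|^{\beta-\alpha/2}$, which tacitly uses both subadditivity and monotonicity of $t\mapsto t^{\beta-\alpha/2}$ and hence requires $\beta\ge\alpha/2$ (true for narrow cones, false for wide ones, where $0<\beta<\alpha/2$). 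Your split at $|x+y|=|x|/2$, with the coarse bound $M(x+y)\le C|x|^{\beta}\le C\,2^{\alpha/2}|x|^{\beta-\alpha/2}|y|^{\alpha/2}$ in the cancellation regime, handles the possibly negative exponent cleanly and is therefore actually the more robust argument; it repairs a minor gap in the paper's own proof rather than introducing one.
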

\begin{proof}
Using \eqref{eq:M-Michalik-up} and recalling that
$\beta-\alpha/2<\alpha/2<1$, we have
\begin{align*}
M(x+y)
&\le C|x+y|^{\beta-\alpha/2}\delta(x+y)^{\alpha/2}\\
&\le C\left(|x|^{\beta-\alpha/2}+|y|^{\beta-\alpha/2}\right)\left(\delta(x)^{\alpha/2}+|y|^{\alpha/2}\right)\\
&\le 2C |x|^{\beta-\alpha/2}
\left(\delta(x)^{\alpha/2}+|y|^{\alpha/2}\right).
\end{align*}
Applying now \eqref{eq:M-Michalik-low} we get the desired estimate.
\end{proof}

The following estimate for the gradient of $M$ follows directly from \cite[Lemma 3.2]{Bogdan-Kulczycki-Nowak-2002}.
\begin{lemma}
We have
\begin{align}\label{eq:M-gradient-0}
|\nabla M(x)|\le C \frac{M(x)}{\delta (x)},\quad x\in K.
\end{align}
\end{lemma}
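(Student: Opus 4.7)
The plan is to reduce the estimate to a classical interior gradient bound for nonnegative $\alpha$-harmonic functions, by representing $M$ via the Poisson kernel of a ball contained in the cone. First I would fix $x \in K$ and set $r = \delta(x)/2$, so that $B := B(x, r)$ lies inside $K$. Since $M$ is $\alpha$-harmonic in $K$ by \eqref{M-harm} and locally bounded by \eqref{eq:M-beta-bound}, it is regular $\alpha$-harmonic on $B$ and thus admits the Poisson representation
\begin{equation*}
M(y) = \int_{B^c} P_B(y, z)\, M(z)\, \ud z, \quad y \in B,
\end{equation*}
where $P_B$ is the Poisson kernel of $B$ for the isotropic $\alpha$-stable process, available in closed form from the classical work of Blumenthal, Getoor and Ray.

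Next I would differentiate under the integral sign in $y$ at $y = x$. Using the explicit formula for $P_B$, one verifies the pointwise gradient bound
\begin{equation*}
|(\nabla_y P_B)(x, z)| \le \frac{C}{r}\, P_B(x, z), \quad z \in B^c,
\end{equation*}
with $C$ depending only on $d$ and $\alpha$. Since $M \ge 0$, this would yield
\begin{equation*}
|\nabla M(x)| \le \frac{C}{r} \int_{B^c} P_B(x, z)\, M(z)\, \ud z = \frac{C}{r}\, M(x) = \frac{2C\, M(x)}{\delta(x)},
\end{equation*}
which is the claimed estimate.

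The step I would expect to be the main obstacle, if one were to prove the lemma from scratch, is verifying the pointwise derivative bound on $P_B$ uniformly in $z \in B^c$; this requires a careful calculation based on the explicit Blumenthal-Getoor-Ray formula, together with justification of differentiation under the integral sign at the boundary of $B$. However, this is precisely what is carried out in \cite[Lemma 3.2]{Bogdan-Kulczycki-Nowak-2002}, and since the only properties of $M$ used above are nonnegativity and $\alpha$-harmonicity in $K$, that lemma applies directly; we invoke it to conclude.
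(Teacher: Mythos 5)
Your argument is correct and coincides with the paper's treatment: the paper simply notes that the bound follows directly from \cite[Lemma 3.2]{Bogdan-Kulczycki-Nowak-2002}, which is exactly the result you invoke at the end (your Poisson-kernel sketch is essentially the proof contained in that reference, using only nonnegativity and $\alpha$-harmonicity of $M$ in $K$). No gap.
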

We need another new estimate related to $M(x)$.
\begin{lemma}\label{lem:M-gradient}
For any $0< \eta <1 $ and any $x\in K$ with $\delta(x)>1$ and  $|y|\le\delta(x)^{1-\eta}$ it holds 
\begin{align}\label{eq:M-gradient}
|M(x+y)-M(x) - \nabla M(x)\cdot y|\le C\, |y|^2\frac{M(x)}{\delta(x)^2}.
\end{align}
\end{lemma}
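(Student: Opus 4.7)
The plan is to combine a second-order Taylor expansion of $M$ with a Hessian bound, splitting the argument into two regimes depending on the size of $|y|$ relative to $\delta(x)$. The key preliminary I would prove first is the second-order gradient estimate
\[
|\nabla^2 M(z)| \le C \frac{M(z)}{\delta(z)^2}, \qquad z \in K.
\]
This I would derive by iterating \eqref{eq:M-gradient-0}: each partial derivative $\partial_i M$ is $\alpha$-harmonic in $K$ (since $\Delta^{\alpha/2}$ commutes with differentiation), so one can apply a Cauchy-type interior gradient estimate to $\partial_i M$ on the ball $B(z,\delta(z)/2)\subset K$, and then use \eqref{eq:M-gradient-0} together with the interior Harnack inequality for the non-negative $\alpha$-harmonic function $M$ to bound $\sup_{B(z,\delta(z)/2)} |\partial_i M|$ by $CM(z)/\delta(z)$. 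Equivalently, one can differentiate the Poisson-type representation $M(z) = \int_{B^c} M(w)\, P_B(z,w)\, dw$ on $B=B(z,\delta(z)/2)$ twice in $z$ and invoke known estimates for $\nabla^2_z P_B$.

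\emph{Case 1: $|y|\le \delta(x)/2$.} The segment $\{x+sy:s\in[0,1]\}$ lies inside the ball $B(x,\delta(x)/2)\subset K$, so $\delta(x+sy)\ge \delta(x)/2$, and the interior Harnack inequality yields $M(x+sy)\le C M(x)$ uniformly in $s$. Taylor's formula with integral remainder gives
\[
M(x+y)-M(x)-\nabla M(x)\cdot y = \int_0^1 (1-s)\, y^{\top} \nabla^2 M(x+sy)\, y\, ds,
\]
and plugging in the Hessian bound produces the desired estimate.

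\emph{Case 2: $\delta(x)/2 < |y|\le \delta(x)^{1-\eta}$.} Here $\delta(x)^\eta < 2$, so $1<\delta(x)<2^{1/\eta}$ sits in a compact range depending on $\eta$ and $|y|^2/\delta(x)^2>1/4$; hence it suffices to bound the left-hand side of \eqref{eq:M-gradient} by $C M(x)$. Since $|y|\le \delta(x)\le |x|$, Lemma \ref{lem:M-newbound} gives $M(x+y)\le C(M(x)+|x|^{\beta-\alpha/2}|y|^{\alpha/2})$; the lower bound \eqref{eq:M-Michalik-low} and $\delta(x)>1$ then yield $|x|^{\beta-\alpha/2}\le CM(x)$, so $M(x+y)\le C'M(x)$. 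Similarly, \eqref{eq:M-gradient-0} combined with $|y|/\delta(x)\le \delta(x)^{-\eta}<1$ gives $|\nabla M(x)\cdot y|\le CM(x)$. The main obstacle in this plan is the Hessian estimate used in Case 1: \eqref{eq:M-gradient-0} is stated for non-negative $\alpha$-harmonic functions, whereas the components of $\nabla M$ are signed, so the iteration has to proceed through a Poisson representation rather than a direct reapplication of \eqref{eq:M-gradient-0}.
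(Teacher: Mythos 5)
Your proposal is correct and essentially reproduces the paper's proof: the paper obtains the Hessian bound $|D^2_{z_iz_j}M(z)|\le C\,M(z)/\delta(z)^2$ precisely by differentiating the Poisson-kernel representation of $M$ on a ball contained in $K$ twice (your fallback route, and rightly so, since iterating \eqref{eq:M-gradient-0} on the signed functions $\partial_i M$ does not work), and then concludes via Taylor's formula together with the Michalik estimates \eqref{eq:M-Michalik-up}--\eqref{eq:M-Michalik-low} to compare $M(z)$ with $M(x)$ on the relevant ball. Your additional case split for $\delta(x)/2<|y|\le\delta(x)^{1-\eta}$, settled by the crude bounds from Lemma \ref{lem:M-newbound} and \eqref{eq:M-gradient-0}, is only a minor organizational variant of the paper's single application of Taylor on the ball $B_{\delta(x)^{1-\eta}}(x)$.
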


\begin{proof}[Proof of Lemma \ref{lem:M-gradient}]
We first find an estimate for the second derivative of $M$.
We make use of the following representation (see e.g.\ \cite{Bogdan-Kulczycki-Nowak-2002}) for the function $M$.
For any $z_0\in K$ and any $r>0$ such that $B_r(z_0)\subset K$, it holds
\begin{align}\label{eq:M-Poisson}
M(z) = \int_{B_r(z_0)^c} P_r (z-z_0, w-z_0) M(w) \,\ud w,\quad z\in B_r(z_0),
\end{align}
where by $P_r (\theta,w)$ we denote the Poisson kernel of the centred ball $B_r(0)$ given by
\begin{align*}
P_r (\theta,w) =
\frac{\Gamma(d/2)\sin(\pi \alpha /2)}{\pi^{d/2+1}}
\left( \frac{r^2-|\theta|^2}{|w|^2 - r^2} \right)^{\alpha /2}\!\!
|w-\theta|^d,\quad \mathrm{for}\ |\theta|<r\ \mathrm{and}\  |w|>r.
\end{align*}
Routine calculations yield
\begin{align*}
\nabla_{\theta_i}P_r(\theta,w) = P_r(\theta,w)F_i(\theta,w),\quad \mathrm{where}\
F_i(\theta,w) = \frac{-\alpha \theta_i}{r^2-|\theta|^2}+d\frac{w_i-\theta_i}{|w-\theta|^2}.
\end{align*}
From this we easily derive formulas for the second derivatives:
\begin{align}\label{eq:Poiss-deriv-1}
\begin{split}
&D^2_{\theta_i \theta_i} P_r(\theta,w)
\\
&=
P_r(\theta,w) \left\{ F^2_i(\theta,w)
-
\frac{\alpha (r^2-|\theta|^2)+2\alpha \theta_i^2}{(r^2-|\theta|^2)^2}
+
d\,
\frac{2(\theta_i-w_i)^2-|w-\theta|^2}{|w-\theta|^4}
\right\}
\end{split}
\end{align}
and
\begin{align}\label{eq:Poiss-deriv-2}
\begin{split}
&D^2_{\theta_i \theta_j} P_r(\theta,w)\\
&=
P_r(\theta,w) \left\{ F_i(\theta,w)F_j(\theta,w)
-
\frac{2\alpha \theta_i \theta_j}{(r^2-|\theta|^2)^2}
-
\frac{2d(w_i-\theta_i)(\theta_i - w_i)}{|w-\theta|^4}
\right\}.
\end{split}
\end{align}
Combining identity \eqref{eq:M-Poisson} 
with equation \eqref{eq:Poiss-deriv-1}  implies that for any $z\in K$ such that $ B_r(z)\subset K$ we have
\begin{align*}
&D^2_{z_i z_i}M(z) \\
&=
\int_{B_r(z)^c}
M(w)
P_r(0,w-z)
\left\{ F_i^2(0,w-z) - \frac{\alpha }{r^2} +\frac{2d(w_i-z_i)^2-|w-z|^2}{|w-z|^4}\right\}\ud w.
\end{align*}
Further, we easily verify that $|F_i(0,w-z)|\le d/|w-x|$ and whence, for any $z\in K$ and any $r>0$ such that $B_r(z)\subset K$,
\begin{align}\label{eq:M-sec-deriv-1}
|D^2_{z_i z_i}M(z)| &
\le C
\frac{M(z)}{r^2}
\stackrel{r\to\delta(z)}{\longrightarrow}C\frac{M(z)}{\delta^2 (z)},
\quad i=1,\ldots ,d.
\end{align}
With the aid of \eqref{eq:Poiss-deriv-2} we can similarly show that for any $z\in K$ and any $r>0$ such that $B_r(z)\subset K$,
\begin{align}\label{eq:M-sec-deriv-2}
|D^2_{z_i z_j}M(z)|\le C\frac{M(z)}{r^2}\stackrel{r\to\delta(z)}{\longrightarrow}C\frac{M(z)}{\delta^2 (z)}, \quad i\neq j,\ i,j=1,\ldots ,d.
\end{align}
We now turn to the proof of \eqref{eq:M-gradient}. Let $x\in K$ and let $\delta= \delta (x)>1$. We choose $\rho>0$ such that $B_\rho(x) \subset K$.
By the Taylor formula, for $|y|\le\delta^{1-\eta}$ we have
\begin{align*}
|M(x+y)-M(x) - \nabla M(x)\cdot y|
\leq
\max_{z\in B_\rho (x)}\max_{1\le i,j\le d}|D^2_{z_i z_j}M(z)|\, |y|^2.
\end{align*}
By equations \eqref{eq:M-sec-deriv-1} and \eqref{eq:M-sec-deriv-2} we obtain
\begin{align*}
|M(x+y)-M(x) - \nabla M(x)\cdot y|\le C |y|^2\max_{z\in B_\rho (x)}\frac{M(z)}{\delta(z) ^2},\quad \mathrm{for}\ x\in K,\, |y|\le\delta^{1-\eta}.
\end{align*}
We next take $\rho=\delta^{1-\eta}$ and observe that then for any $z\in B_{\delta ^{1-\eta}}(x)$ we clearly have $c\delta \le\delta (z) \le2\delta $ and $\big||z|-|x|\big|\le\delta^{1-\eta}$. In view of \eqref{eq:M-Michalik-up} and \eqref{eq:M-Michalik-low} this implies that
\begin{align*}
M(z)\le C |z|^{\beta -\alpha /2}\delta(z)^{\alpha/2}\le C_1|x|^{\beta -\alpha/2}\delta^{\alpha /2}\le C_2 M(x)
\end{align*}
and we arrive at
\begin{align*}
\max_{z\in B_{\delta^{1-\eta}} (x)}\frac{M(z)}{\delta(z) ^2}\le C \frac{M(x)}{\delta^{2}},
\end{align*}
which  completes the proof.
\end{proof}

\subsection{Error estimates}

We introduce the following \textit{error} function
\begin{align}\label{errorfunction}
f(x) = \bbE M(x+X) - M(x),\quad x\in K.
\end{align}

\begin{lemma}\label{lem:error-estimate}
There exists $\varepsilon >0$ such that
\begin{align}
|f(x)|\le C \frac{M(x)}{\delta (x)^{\alpha +\varepsilon}},\quad x\in K\text{ with }\delta(x)\ge1.
\end{align}
\end{lemma}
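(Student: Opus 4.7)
The plan is to decompose
$$
f(x) = \underbrace{\bbE[M(x+X)] - \bbE[M(x+Z(1))]}_{I_1(x)} + \underbrace{\bbE[M(x+Z(1))] - M(x)}_{I_2(x)}
$$
and to prove $|I_j(x)|\le CM(x)/\delta(x)^{\alpha+\varepsilon_j}$ for some $\varepsilon_j>0$, $j=1,2$. The piece $I_1$ isolates the discrepancy between the laws of $X$ and $Z(1)$ and is controlled using the closeness assumption \eqref{eq:Ass-densities-diff}; the piece $I_2$ is a purely $\alpha$-stable quantity controlled via the $\alpha$-harmonicity of $M$.

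For $I_2$, I would use that $\{M(Z(t\wedge\tau_K^Z))\}_{t\ge0}$ is a martingale starting at $M(x)$, whose value at $\tau_K^Z$ vanishes since $M\equiv 0$ on $K^c$. Optional stopping at time $1$ therefore gives $\bbE_x[M(Z(1));\tau_K^Z>1]=M(x)$, whence $I_2(x)=\bbE_x[M(Z(1));\tau_K^Z\le 1]$. I would then combine the refined pointwise estimate $M(z)\le C|z|^{\beta-\alpha/2}\delta(z)^{\alpha/2}$ from \eqref{eq:M-Michalik-up} with the tail bound $\bbP_x(\tau_K^Z\le 1)\le C\delta(x)^{-\alpha}$ and the strong Markov property at $\tau_K^Z$, which forces $Z(1)$ to sit close to $\partial K$ on the event $\{\tau_K^Z\le 1,\,Z(1)\in K\}$. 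This extra proximity to the boundary translates into an additional factor $\delta(x)^{-\varepsilon_2}$ beyond the naive $\delta(x)^{-\alpha}$.

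For $I_1$, I would start from $I_1(x)=\int(M(x+y)-M(x))(p_X(y)-p_Z(y))\,dy$, using $\int(p_X-p_Z)\,dy=0$; when $\alpha>1$ I would additionally subtract $\nabla M(x)\cdot y\,\Ind_{\{|y|\le r\}}$ inside the bracket, since $\int y(p_X-p_Z)\,dy=0$ thanks to the common zero-mean assumption. I would then split the integration region at $|y|=\delta(x)^{1-\eta}$ with a small $\eta>0$. On the inner part $x+y$ remains far from $\partial K$, so Lemma~\ref{lem:M-gradient} (when $\alpha>1$) or the Hölder-type bound coming from Lemma~\ref{lem:M-newbound} (when $\alpha<1$) controls the integrand; on the outer part the sharper decay $|p_X-p_Z|(y)\le C(1+|y|)^{-(d+\alpha+\nu)}$ from \eqref{eq:Ass-densities-diff} supplies the essential extra $\nu$-worth of decay. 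Combining both regions yields $|I_1(x)|\le CM(x)/\delta(x)^{\alpha+\varepsilon_1}$.

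The hard part will be the estimate on $I_2$ in the regime $\alpha\in(0,1)$, where there is no centering to exploit and no Taylor expansion for $M$ beyond Hölder regularity. Obtaining the strict improvement of the exponent beyond $\alpha$ forces one to use sharp exit-time asymptotics for $Z$ in the cone together with the boundary decay of $M$; one must, in particular, handle carefully the regime $|x|\gg\delta(x)$ (i.e.\ $x$ close to $\partial K$ but far from the apex), where $M(x)$ is much smaller than $|x|^\beta$ and the two-sided bound \eqref{eq:M-Michalik-low} has to be invoked to re-express the final estimate in terms of $M(x)$ rather than $|x|^\beta$.
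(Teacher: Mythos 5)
Your decomposition is exactly the paper's: using $M(x)=\bbE_x[M(Z(1));\tau_K^Z>1]$ turns $f$ into the law-discrepancy integral $I=\int(M(x+y)-M(x))(p_X(y)-p_Z(y))\,\ud y$ plus $J=\bbE_x[M(Z(1));\tau_K^Z\le 1]$, and your plan for $I$ when $\alpha>1$ (subtract $\nabla M(x)\cdot y$ via the common zero mean, split at $|y|=\delta(x)^{1-\eta}$, Lemma~\ref{lem:M-gradient} inside, the $\nu$-decay of $p_X-p_Z$ outside) and for $J$ (strong Markov at the exit, boundary decay of $M$, converting $|x|^{\beta-\alpha/2}$ back into $M(x)$ via \eqref{eq:M-Michalik-low}) is the same route the paper takes.

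There is, however, a genuine gap in the inner region of $I$ when $\alpha\in(0,1)$. Lemma~\ref{lem:M-newbound} only bounds $M(x+y)$ from above by $C\left(M(x)+|x|^{\beta-\alpha/2}|y|^{\alpha/2}\right)$; it gives no smallness of the difference $M(x+y)-M(x)$, so on $\{|y|\le\delta(x)^{1-\eta}\}$ your estimate degenerates to $CM(x)\int(1+|y|)^{-(d+\alpha+\nu)}\ud y\asymp M(x)$, i.e.\ no decay in $\delta(x)$ at all. Even a genuine boundary-H\"older difference bound $|M(x+y)-M(x)|\le C|x|^{\beta-\alpha/2}|y|^{\alpha/2}$ would only give $CM(x)/\delta(x)^{\alpha/2}$ after invoking \eqref{eq:M-Michalik-low}, strictly weaker than the required $M(x)/\delta(x)^{\alpha+\varepsilon}$. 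What the inner region needs is the interior gradient estimate \eqref{eq:M-gradient-0}, $|\nabla M|\le CM/\delta$: since $\delta(x+\varrho y)\asymp\delta(x)$ and $|x+\varrho y|\asymp|x|$ for $|y|\le\delta(x)^{1-\eta}$, the fundamental theorem of calculus gives $|M(x+y)-M(x)|\le C|y|\,M(x)/\delta(x)$, and then, choosing $\nu$ with $\alpha+\nu<1$, the integral $\int_{|y|\le\delta^{1-\eta}}|y|(1+|y|)^{-(d+\alpha+\nu)}\ud y\le C\delta^{(1-\eta)(1-\alpha-\nu)}$ produces the exponent $\alpha+\nu(1-\eta)+\eta(1-\alpha)>\alpha$, which is how the paper closes the case $\alpha<1$. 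A smaller caveat concerns $J$: the exit does not pathwise force $Z(1)$ to be near $\partial K$ (the path can jump from $K^c$ back deep into $K$); the correct statement is an averaged one, obtained by conditioning on the exit position $y\in K^c$ and integrating the return contribution $\bbE[M(y+Z_{1-s});\,y+Z_{1-s}\in K]$ against the stable density bound \eqref{eq:stable-denisty-bound}. This is where most of the actual work in the paper's proof lies, it requires no case distinction in $\alpha$ (contrary to your closing remark), and your sketch leaves it unproved.
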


\begin{proof}
Recall that the function $M$ is harmonic for $Z(t)$ in $K$.
We denote \begin{equation}\label{Zand tauZ}Z=Z(1),\quad \tau^Z_K=\inf\{t\geq 0: Z(t)\notin K\}.\end{equation}
We start by rewriting the function $f(x)$ as follows
\begin{align}\label{eq:I-J-decomp}
\begin{split}
f(x) &= \bbE M(x+X) - \bbE[ M(x+Z);\, \tau^Z_K> 1]\\
&=
\bbE \left[ M(x+X) - M(x+Z)\right] + \bbE[ M(x+Z);\, \tau^Z_K\le1]\\
&=
\int \left( M(x+y) - M(x)\right) \left( p_X(y) -p_Z(y)\right) \ud y\\
&\qquad \qquad + \bbE[ M(x+Z);\, \tau^Z_K\le1]
=: I + J.
\end{split}
\end{align}
We shall first handle the integral $I$  in the case when $\alpha <1$. We have
\begin{align*}
I  &=\left( \int_{|y|<\delta^{1-\eta}} + \int_{\delta^{1-\eta} <|y|} \right)
 \left( M(x+y) - M(x)\right) \left( p_X(y) -p_Z(y)\right) \ud y
 =:
 I_1 +I_2,
\end{align*}
where we set again $\delta = \delta (x)$.
We start by estimating $I_1$. We use the fundamental theorem for line integrals in the form
\begin{align*}
M(x+y) - M(x) = \int_{0}^1 \nabla M(x+\varrho y)\cdot y\, \ud \varrho.
\end{align*}
We combine this formula with assumption \eqref{eq:Ass-densities-diff} and we obtain that for some $\varrho_0 \in (0,1)$,
\begin{align*}
|I_1|\le C \int_{|y|<\delta^{1-\eta}}|\nabla M(x+\varrho_0y)||y| \frac{1}{(1+|y|)^{\alpha +d+\nu}}\ud y.
\end{align*}
We note that for $|y|\le\delta^{1-\eta}$ we easily check that
$c_1\delta (x)\le\delta(x+\theta y)\le c_2\delta(x)$ and $|x+\theta y|\le c_3|x|$, for some constants $c_1,c_2,c_3>0$. These estimates combined with \eqref{eq:M-gradient-0} and \eqref{eq:M-Michalik-up}-\eqref{eq:M-Michalik-low} imply that
\begin{align*}
|\nabla M(x+\varrho_0 y)|\le\frac{M(x+\varrho_0 y)}{\delta (x+\varrho_0 y)}\le C\frac{M(x)}{\delta (x)},
\end{align*}
and whence,  
\begin{align}\label{eq:I_1-alpha<1}
|I_1|\le C\frac{M(x)}{\delta} \int_{0}^{\delta^{1-\eta}}r^{-\alpha - \nu}\ud r = C \frac{M(x)}{\delta^{\alpha + \nu (1-\eta) + \eta (1-\alpha)}}
,
\end{align}
where $\nu>0$ from \eqref{eq:Ass-densities-diff} is chosen in such a way that $\alpha +\nu <1$.
Note that it can be always done.
To estimate integral $I_2$ we split it into two parts and apply Lemmas~\ref{lem:M-newbound} and \ref{lem:M-bounds} which yields
\begin{align}\label{eq:I_2-est}
\begin{split}
|I_2|&\leq
\left( \int_{\delta^{1-\eta} <|y|<|x|} + \int_{|y|>|x|}\right)
 \left\vert M(x+y) - M(x)\right\vert \left\vert p_X(y) -p_Z(y)\right\vert \ud y\\
 &\leq
 C_1M(x) \int_{\delta^{1-\eta} <|y|<|x|} \frac{\ud y}{(1+|y|)^{\alpha +d+\nu}}
 +
 C_1|x|^{\beta-\alpha/2}
 \int_{\delta^{1-\eta} <|y|<|x|}
 \frac{|y|^{\alpha/2}\ud y}{(1+|y|)^{\alpha+d+\nu}}\\
 &\hspace{1cm}+
 C_2
 \int_{|y|>|x|} |y|^\beta \frac{\ud y}{(1+|y|)^{\alpha +d+\nu}}\\
 &\leq
 C_3 M(x)
 \frac{1}{\delta^{(1-\eta)(\alpha +\nu)}}
 +
 C_4|x|^{\beta-\alpha/2}\frac{1}{\delta^{(1-\eta)(\alpha +\nu)-\alpha/2}}
 +C_5
 \frac{|x|^\beta}{|x|^{\alpha +\nu}}
 \leq
 C_6
  \frac{M(x)}{\delta^{\alpha +\varepsilon}},
 \end{split}
\end{align}
where we used the inequality $\delta(x)\le |x|$ and \eqref{eq:M-Michalik-low}.

We next handle $I$ in \eqref{eq:I-J-decomp} in the case when $1< \alpha <2 $. We rewrite the integral $I$ as follows
\begin{align*}
I = \int&\left( M(x+y) - M(x) -\nabla M(x)\cdot y\right) \left( p_X(y) -p_Z(y)\right) \ud y\\
&\qquad +
\nabla M(x) \cdot \int y \left( p_X(y) -p_Z(y)\right) \ud y
\end{align*}
and we observe that the second integral vanishes as we have assumed $\bbE X = \bbE Z =0$. We then proceed similarly as before, that is we decompose $I$ into two parts
\begin{align*}
I  &=\left( \int_{|y|<\delta^{1-\eta}} + \int_{\delta^{1-\eta} <|y|} \right)
\left( M(x+y) - M(x) -\nabla M(x)\cdot y\right) \left( p_X(y) -p_Z(y)\right) \ud y\\
&=:
 I_1 +I_2.
\end{align*}
To estimate $I_1$ we can directly apply \eqref{eq:M-gradient} which yields
\begin{align*}
|I_1|&\le C_1\frac{M(x)}{\delta^2}\int_{0}^{\delta^{1-\eta}} |y|^2\frac{\ud y}{(1+|y|)^{d+\alpha +\nu}}\le C_2
\frac{M(x)}{\delta^{\alpha + \nu (1-\eta) + \eta (2-\alpha)}}
,
\end{align*}
where $\nu$ from \eqref{eq:Ass-densities-diff} is chosen in such a way that $\alpha +\nu <2$.
To estimate $I_2$ we write
\begin{align*}
|I_2|&\leq
 \int_{\delta^{1-\eta} <|y|}
 \left\vert M(x+y) - M(x)\right\vert \left\vert p_X(y) -p_Z(y)\right\vert \ud y\\
&\qquad +
 |\nabla M(x)| \int_{\delta^{1-\eta} <|y|} |y|\left\vert p_X(y) -p_Z(y)\right\vert \ud y
\end{align*}
and we notice that the first integral can be handled exactly in the same manner as in \eqref{eq:I_2-est}, whereas for the second integral we have
\begin{align*}
 |\nabla M(x)| \int_{\delta^{1-\eta} <|y|} |y|\left\vert p_X(y) -p_Z(y)\right\vert \ud y
 &\leq
 C_1
 \frac{M(x)}{\delta}\int_{\delta^{1-\eta}}^\infty |y|\frac{\ud y}{(1+|y|)^{\alpha +d+\nu}}
 \\
 &\leq
 C_2
 \frac{M(x)}{\delta^{\alpha +\nu +\eta (1-\alpha -\nu)}} .
\end{align*}
We observe that we can choose $\nu>0$ from \eqref{eq:Ass-densities-diff} and $0<\eta <1$ such that $\nu +\eta (1-\alpha -\nu)>0$. Indeed, it trivially holds 
\[
\nu +\eta (1-\alpha -\nu)>0 \Longleftrightarrow \nu \left( \frac{1}{\eta}-1\right) > \alpha -1  .
\]
We conclude that
$|I_2|\le C \frac{M(x)}{\delta^{\alpha + \varepsilon}}$, for some $\varepsilon >0$.

We are left to estimate the integral $J$ from \eqref{eq:I-J-decomp}. We start with the following formula
\begin{align*}
J& =
\int_{0}^1 \int_{K^c}\bbE \left[ M(y+Z_{1-s});\, y+Z_{1-s}\in K\right]\bbP_x (\tau \in \ud s,\, Z_s\in \ud y)\\
&\leq
\int_{0}^1 \int_{K^c}\bbE \left[ M(y+Z_{1-s});\, y+Z_{1-s}\in K\right]\bbP_x (Z_s\in \ud y).
\end{align*}
We shall now estimate the expectation in the last integral. In the first step we consider the event $\{|y+Z_{1-s}|\le|y|/2\}$ on which we have the bound $M(y+Z_{1-s})\le C |y|^\beta$, where we used \eqref{eq:M-beta-bound}. It follows that
\begin{align}\label{eq:J-est-1}
\begin{split}
\bbE &\left[ M(y+Z_{1-s});\, y+Z_{1-s}\in K,\, |y+Z_{1-s}|\le|y|/2 \right]\\
&\leq
C |y|^\beta \,
\bbP \left( |y+Z_{1-s}|\le|y|/2\right)
\leq
C |y|^\beta \, \bbP (|Z_{1-s}|\ge|y|/2)\\
&\leq
C_1(1-s)\frac{|y|^\beta}{(1+|y|)^{\alpha}}
\le C_1 (1-s) \frac{|y|^{\beta -\alpha /2}}{(1+\delta (y))^{\alpha /2}},
\end{split}
\end{align}
where in the last inequality we applied the inequality $\delta (y)\le|y|$.
We next consider the event $\{|y+Z_{1-s}|\ge2|y|\}$ on which it evidently holds  $|Z_{1-s}|\ge|y|$. Hence
\begin{align}\label{eq:J-est-2}
\begin{split}
\bbE &\left[ M(y+Z_{1-s});\, y+Z_{1-s}\in K,\, |y+Z_{1-s}|\ge2|y| \right]\\
&\leq C
\bbE \left[ |Z_{1-s}|^\beta ;\, |Z_{1-s}|\ge|y| \right]
\leq
C_1 (1-s)\frac{1}{(1+|y|)^{\alpha - \beta}}\\
&\leq
C_2
(1-s) \frac{|y|^{\beta - \alpha /2}}{(1+\delta (y))^{\alpha /2}}.
\end{split}
\end{align}
We end up considering the event $\{ |y+Z_{1-s}|\in (|y|/2, 2|y|)\}$ and we apply \eqref{eq:M-Michalik-up} to obtain
\begin{align}\label{eq:J-est-3}
\begin{split}
\bbE &\left[ M(y+Z_{1-s});\, y+Z_{1-s}\in K,\, |y+Z_{1-s}|\in (|y|/2, 2|y|) \right]\\
&\leq
C
|y|^{\beta -\alpha/2}
\bbE \left[ \delta (y + Z_{1-s})^{\alpha /2} ;\,  y+Z_{1-s}\in K\, |y+Z_{1-s}|\in (|y|/2, 2|y|) \right] \\
&\leq
C_1
|y|^{\beta -\alpha/2}
\bbE \left[ |Z_{1-s}|^{\alpha /2};\, |Z_{1-s}|>\delta(y)/2 \right]
\le C_2
(1-s) \frac{|y|^{\beta -\alpha/2}}{(1+\delta(y))^{\alpha /2}}.
\end{split}
\end{align}
Next, by combining estimates \eqref{eq:J-est-1}--\eqref{eq:J-est-3} with the bound for the transition function of the stable process
\begin{align}\label{eq:stable-denisty-bound}
p_Z(s,y)  \asymp \frac{s}{1+|y|^{d+\alpha}},\quad s>0,\  y\in \bbR^d,
\end{align}
we obtain
\begin{align*}
J&\le C
\int_0^1 \int_{K^c}
s(1-s) \frac{1}{1+|y-x|^{d+\alpha}}
 \frac{|y|^{\beta -\alpha/2}}{(1+\delta(y))^{\alpha /2}}\,
 \ud y\, \ud s\\
 &= C_1
  \int_{K^c}
\frac{1}{1+|y-x|^{d+\alpha}}
 \frac{|y|^{\beta -\alpha/2}}{(1+\delta(y))^{\alpha /2}}\,
 \ud y
 = C_1
 \int_{K^c-x}\frac{1}{1+|z|^{d+\alpha}}
 \frac{|z+x|^{\beta -\alpha/2}}{(1+\delta(z+x))^{\alpha /2}}\,
 \ud z\\
 &\le C_2
 |x|^{\beta -\alpha/2}\int_{|z|\ge\delta}\frac{1}{1+|z|^{d+\alpha}}\ud z
 +
 C_2
 \int_{K^c-x}\frac{|z|^{\beta -\alpha/2}}{(1+|z|^{d+\alpha})(1+\delta(z+x))^{\alpha /2}}\,
 \ud z\\
& \le C_3 \frac{|x|^{\beta-\alpha/2}}{(1+\delta)^\alpha}
 +
 C_2
  \int_{|z|\ge\delta}\frac{|z|^{\beta -\alpha/2}}{(1+|z|^{d+\alpha})}\,  \ud z
  \le C_3 \frac{|x|^{\beta-\alpha/2}}{(1+\delta)^\alpha}
 +
 C_4(1+\delta)^{\beta -3\alpha/2}\\
 &\leq
 C_5\frac{|x|^{\beta-\alpha/2}}{(1+\delta)^\alpha}
 \leq
 C_6
 \frac{M(x)}{(1+\delta)^{3\alpha /2}},
\end{align*}
where in the last line we used \eqref{eq:M-Michalik-low}.
\end{proof}


\subsection{Killed Green function estimates}
In what follows we will abandon the index $K$ while working with the killed Green function, whence we will write
\begin{equation*}
G(x,y) = G_K (x,y),\quad x,y\in K,
\end{equation*}
where $G_K(x,y) $ is the function defined at \eqref{eq:killed-Green-stable}. According to \cite[Theorem 3.11]{Michalik-2006}, for any circular cone $K$ centred at the origin, and for any $x,y\in K$ it holds\footnote{We write $f(x)\asymp g(x)$ if there are two constants $c,C>0$ such that $c\, g(x)\le f(x)\le C g(x)$.}
\begin{align}\label{eq:Green-f-killed-bounds}
G(x,y) \asymp
\frac{A_{d,\alpha}}{|x-y|^{d-\alpha}}\wedge
\frac{\delta (x)^{\alpha/2}\delta(y)^{\alpha/2}}{|x-y|^d}\left( \frac{|x|\wedge |y|}{|x|\vee |y|}\right)^{\beta -\alpha /2},
\end{align}
where $A_{d,\alpha} = \frac{\Gamma \left( \frac{d-\alpha}{2}\right)}{2^\alpha \pi^{d/2}\Gamma \left( \frac{\alpha}{2}\right)}$. According to \cite[Corollary 3.3]{Bogdan-Kulczycki-Nowak-2002},  the following estimate is valid for the gradient of $G$,
\begin{align}\label{eq:Green-killed-gradient-est}
|\nabla_x G(x,y)|\le C\frac{G(x,y)}{\delta (x)\wedge |x-y|},\quad x,y\in K,\, x\neq y.
\end{align}
Furthermore, since for each $y\in K$ the function $G(\cdot ,y)$ is $\alpha$-harmonic in $K\setminus \{y\}$, proceeding similarly as in the proof of Lemma \ref{lem:M-gradient} and using the Poisson kernel (cf.\ \eqref{eq:M-Poisson}) we can prove that for all $i,j=1,\ldots ,n$,
\begin{align}\label{eq:Green-killed-second-deriv-est}
|D^2_{x_ix_j}G(x,y)|\le C \frac{G(x,y)}{\left( \delta(x) \wedge |x-y|\right)^2},\quad x,y\in K,\, x\neq y.
\end{align}
We will prove now the key estimate of the killed Green function.
\begin{lemma}\label{lem:Green-bounds-our}
For any $A>0$ there is a constant $C=C_A>0$ such that 
\begin{align}\label{eq:Green-f-killed-our-bounds}
G(x,y) \le C
\begin{cases}
\frac{M(x) M(y)}{|y|^{d-\alpha +2\beta}},& |x|\le|y|,\, |x-y|\ge A|x|;\\
\frac{M(x) M(y)}{|x|^{d-\alpha +2\beta}},& |y|\le|x|,\, |x-y|\ge A|x|;\\
\frac{1}{|x-y|^d}
\frac{M(x) M(y)}{|x|^{\beta -\alpha/2}|y|^{\beta -\alpha/2}}, & \frac{\delta(x)}{2}\le|x-y|\le A|x|;\\
\frac{1}{|x-y|^{d-\alpha}},& |x-y|\le\frac{\delta(x)}{2}.
\end{cases}
\end{align}
\end{lemma}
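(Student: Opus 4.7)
My plan is to deduce all four bounds from the two-sided estimate \eqref{eq:Green-f-killed-bounds} by rewriting its right hand side using the pointwise two-sided bounds \eqref{eq:M-Michalik-up}--\eqref{eq:M-Michalik-low}, which together read $M(x)\asymp|x|^{\beta-\alpha/2}\delta(x)^{\alpha/2}$. The point is that once the $M$-factors in the claimed bounds are unfolded, each of the four right hand sides reduces, up to constants, to $\delta(x)^{\alpha/2}\delta(y)^{\alpha/2}$ times an explicit power of $|x-y|$, $|x|$ or $|y|$, and matching these against the two branches of \eqref{eq:Green-f-killed-bounds} is just exponent bookkeeping.

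Case 4 is immediate, being the first branch $A_{d,\alpha}|x-y|^{\alpha-d}$ of \eqref{eq:Green-f-killed-bounds}. For the other cases I use the second branch
\[
G(x,y)\le C\,\frac{\delta(x)^{\alpha/2}\delta(y)^{\alpha/2}}{|x-y|^d}\left(\frac{|x|\wedge|y|}{|x|\vee|y|}\right)^{\beta-\alpha/2}.
\]
In Case 3, $|x-y|\le A|x|$ combined with the triangle inequality forces $|y|$ comparable to $|x|$ (taking $A<1$ is harmless for the application, otherwise one splits into $|y|\ge|x|/2$ and $|y|<|x|/2$, and in the second sub-regime one uses $|x-y|\ge|x|/2$ so that $|x-y|\asymp|x|$). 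Hence the geometric ratio factor is bounded by a constant $C_A$, and unfolding $\delta(x)^{\alpha/2}=M(x)/|x|^{\beta-\alpha/2}$ (and analogously for $y$) produces the stated expression. In Cases 1 and 2 I use $|x-y|\ge A|x|$ together with $|x-y|\ge \frac{A}{A+1}(|x|\vee|y|)$ from the triangle inequality to replace $|x-y|$ in the denominator by $|x|\vee|y|$; the arithmetic identity
\[
d+(\beta-\alpha/2)+(\beta-\alpha/2)=d-\alpha+2\beta
\]
then lines the exponent of $|x|\vee|y|$ in the denominator up with the stated one, and absorbing the factor $(|x|\wedge|y|)^{\beta-\alpha/2}$ produces the missing $|x|^{\beta-\alpha/2}|y|^{\beta-\alpha/2}$ which combines with $\delta(x)^{\alpha/2}\delta(y)^{\alpha/2}$ to give $M(x)M(y)$.

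The only mildly delicate step is Case 3 when $\beta<\alpha/2$, since then the ratio factor in \eqref{eq:Green-f-killed-bounds} exceeds $1$ and the claimed bound is in fact tighter than the immediate estimate; resolving this forces the comparability $|x|\asymp|y|$ alluded to above, which is the real content of the condition $|x-y|\le A|x|$. Once this is in place, no tools beyond \eqref{eq:Green-f-killed-bounds} and the two-sided Martin kernel bounds \eqref{eq:M-Michalik-up}--\eqref{eq:M-Michalik-low} are required, and the constants $C_A$ may be tracked explicitly through the triangle inequality estimates.
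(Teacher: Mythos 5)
Your proof follows the paper's own argument essentially verbatim: all four bounds are read off from the two--sided estimate \eqref{eq:Green-f-killed-bounds} combined with $M(x)\asymp|x|^{\beta-\alpha/2}\delta(x)^{\alpha/2}$ from \eqref{eq:M-Michalik-up}--\eqref{eq:M-Michalik-low}, and your treatment of Cases 1, 2 and 4 --- in particular the observation $|x-y|\ge\frac{A}{1+A}(|x|\vee|y|)$, obtained by splitting into $|y|\le(1+A)|x|$ and $|y|>(1+A)|x|$ --- is exactly the paper's.

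The one place where your write-up is not watertight is the Case 3 point you flag yourself, and it deserves a concrete warning. When $\beta\ge\alpha/2$ the ratio factor in \eqref{eq:Green-f-killed-bounds} is at most $1$ and may be discarded, and when $A<1$ the constraint $|x-y|\le A|x|$ forces $|y|\asymp|x|$, so the factor is $\asymp1$; in both situations your argument closes. But for $A\ge1$ and $\beta<\alpha/2$ your fallback in the sub-regime $|y|<|x|/2$ does not work: knowing $|x-y|\asymp|x|$ gives no control on $\left(|y|/|x|\right)^{\beta-\alpha/2}$, which tends to infinity as $|y|/|x|\to0$. In fact the stated Case 3 bound is then genuinely false: taking $x=Re_d$ and $y=re_d$ with $r\ll R$, the two-sided estimate \eqref{eq:Green-f-killed-bounds} gives $G(x,y)\ge c\,R^{\alpha-\beta-d}r^{\beta}$, whereas the claimed right-hand side is $\asymp R^{\alpha/2-d}r^{\alpha/2}$, and the ratio $(R/r)^{\alpha/2-\beta}$ is unbounded. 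This is a shared blemish rather than an error of yours alone: the paper's proof of Case 3 silently drops the ratio factor and so has the identical gap, and in the subsequent applications the lemma is effectively used only where $|y|\asymp|x|$ on the third region. A clean statement of Case 3 should therefore carry the restriction $A<1$, or the extra hypothesis $\beta\ge\alpha/2$ or $|y|\ge c|x|$.
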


\begin{proof}
We start with the case $|x|\le|y|$ and $|x-y|\ge A|x|$. Then estimate \eqref{eq:Green-f-killed-bounds} yields
\begin{align}\label{eq:Green-est-help}
G(x,y) &\le C \frac{\delta(x)^{\alpha /2} \delta(y)^{\alpha /2}}{|x-y|^d}\left(\frac{|x|}{|y|}\right)^{\beta -\alpha/2}\nonumber \\
&=
C \frac{\delta(x)^{\alpha /2} |x|^{\beta -\alpha/2}\delta(y)^{\alpha /2}  |y|^{\beta -\alpha/2}}{|x-y|^d\, |y|^{2\beta -\alpha }}
\leq
C_1
\frac{M(x) M(y)}{|x-y|^d\, |y|^{ 2\beta-\alpha }},
\end{align}
where in the last line we applied \eqref{eq:M-Michalik-low}.
We then distinguish between two cases. First we assume that $|x|\le|y| \le(1+A)|x|$, then
$|x-y|\ge A|x|\ge\frac{A}{1+A}|y|$ and
we obtain the desired bound directly from \eqref{eq:Green-est-help}.
Next, we assume $|y|>(1+A)|x|$, and then
$|x-y|\ge|y|-|x|\ge\frac{A}{1+A}|y|$,
 so the final estimate follows again from \eqref{eq:Green-est-help}.

The case when $|y|\le|x|$ and $|x-y|\ge A|x|$ can be handled in the same way as the previous one.

In the case when
$\frac{\delta(x)}{2}\le|x-y|\le A|x|$, we combine \eqref{eq:Green-f-killed-bounds} with \eqref{eq:M-Michalik-up} and we obtain
\begin{align*}
G(x,y)&\le C
\frac{\delta (x)^{\alpha/2}\delta(y)^{\alpha/2}}{|x-y|^d}\le C_1
\frac{1}{|x-y|^d}
\frac{M(x) M(y)}{|x|^{\beta -\alpha/2}|y|^{\beta -\alpha/2}}.
\end{align*}
The last bound for $|x-y|<\frac{\delta(x)}{2}$ follows directly from \eqref{eq:Green-f-killed-bounds} if we drop the second part of the minimum appearing in that estimate.
\end{proof}

\section{Construction of a non-negative supermartingale}
\label{sec:supermartingale}

We start by investigating the following function
\begin{align}\label{eq:U-lambda-f}
U_\Lambda (x) = \int_K G(x,y)\Lambda(y)\ud y,\quad \text{for\ } x\in K,
\end{align}
where
\begin{align}\label{Lambda}
\Lambda (y) = \frac{M(y)}{(1+\delta (y))^{\alpha +\varepsilon/2}},\quad y\in K,
\end{align}
and $\varepsilon >0$ comes from Lemma \ref{lem:error-estimate}.

\subsection{Bounds related to $U_\Lambda (x)$}
\begin{lemma}\label{lem:U_Lambda-o-small-of-M}
In the above notation,
\begin{align}
U_\Lambda(x) \le C \frac{M(x)}{\delta(x)^{\varepsilon/2}},\quad x\in K,\ \delta(x)\ge 1.
\end{align}
\end{lemma}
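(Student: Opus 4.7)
The plan is to split $U_\Lambda(x)=\int_K G(x,y)\Lambda(y)\,\ud y$ according to the four regimes of Lemma~\ref{lem:Green-bounds-our} with $A=1/2$, so that the four pieces form a partition of $K$ and one has $|y|\asymp|x|$ on the intermediate block $\delta(x)/2\le|x-y|\le|x|/2$. The key auxiliary estimate, valid for any $x\in K$ and $R\ge 1$, is the boundary-layer volume bound
\begin{equation*}
\int_{K\cap B_R(x)} (1+\delta(y))^{-\varepsilon/2}\,\ud y \le C\,R^{d-\varepsilon/2},
\end{equation*}
which follows by integration by parts in $t$ from the geometric fact that $|\{y\in K\cap B_R(x):\delta(y)\le t\}|\le C R^{d-1}t$ for $0\le t\le R$ (true for a circular cone).

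On the diagonal block $|x-y|\le\delta(x)/2$ one has $\delta(y)\asymp\delta(x)$, $|y|\asymp|x|$, and hence, by \eqref{eq:M-Michalik-up}--\eqref{eq:M-Michalik-low}, $M(y)\asymp M(x)$; combined with $G(x,y)\le C|x-y|^{\alpha-d}$ from the fourth line of \eqref{eq:Green-f-killed-our-bounds} and $\int_{|z|\le\delta(x)/2}|z|^{\alpha-d}\,\ud z = c\,\delta(x)^{\alpha}$, this contributes $C M(x)/\delta(x)^{\varepsilon/2}$. On the intermediate block we still have $|y|\asymp|x|$; plugging the third line of \eqref{eq:Green-f-killed-our-bounds} into $G(x,y)\Lambda(y)$ and applying \eqref{eq:M-Michalik-up} to both copies of $M(y)$ cancels the factor $|x|^{2\beta-\alpha}$ and yields
\begin{equation*}
G(x,y)\Lambda(y)\le C\,\frac{M(x)}{|x-y|^{d}(1+\delta(y))^{\varepsilon/2}}.
\end{equation*}
A dyadic decomposition over annuli $|x-y|\asymp 2^{k}\delta(x)$, combined with the boundary-layer estimate applied on each annulus (each term becomes of order $M(x)(2^{k}\delta(x))^{-\varepsilon/2}$), produces a convergent geometric series summing to $C M(x)\delta(x)^{-\varepsilon/2}$.

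On the two far-field blocks $|x-y|\ge|x|/2$ (first two lines of \eqref{eq:Green-f-killed-our-bounds}), the same manipulation using \eqref{eq:M-Michalik-up} to bound $M(y)^2\le C|y|^{2\beta-\alpha}\delta(y)^{\alpha}$ reduces the integrand to $C\,M(x)|y|^{-d}(1+\delta(y))^{-\varepsilon/2}$. A dyadic decomposition in $|y|$ (outward shells $|y|\asymp 2^{k}|x|$ for the region $|y|\ge|x|$, and shells $|y|\asymp 2^{-k}|x|$ for $|y|\le|x|$), together with the volume estimate, give a contribution of order $M(x)/|x|^{\varepsilon/2}\le M(x)/\delta(x)^{\varepsilon/2}$; the small-scale part $|y|\le 1$ in the second far-field block is handled separately using integrability of $|y|^{2\beta-\alpha}$ near the origin (which holds as $2\beta-\alpha+d>0$).

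The main obstacle is the intermediate block: neither naive bound is sharp. The Green-kernel factor $|x-y|^{-d}$ is singular at $y=x$, and $\delta(y)$ can be arbitrarily small even while $|y|\asymp|x|$. Simply dropping $(1+\delta(y))^{-\varepsilon/2}$ produces the divergent logarithm $\log(|x|/\delta(x))$, so one must exploit the quantitative fact that $\{\delta(y)\le t\}\cap B_R(x)$ has measure only of order $R^{d-1}t$; the boundary-layer estimate above is precisely the device that converts this geometric input into the $\delta(x)^{-\varepsilon/2}$ decay.
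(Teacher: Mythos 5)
Your decomposition is essentially the paper's: the four regimes of Lemma~\ref{lem:Green-bounds-our}, reduction of the integrand via \eqref{eq:M-Michalik-up}--\eqref{eq:M-Michalik-low}, and a boundary-layer volume bound which is the continuous analogue of the paper's slicing by the sets $A_k(r)$ and of its spherical estimate $\delta(r\theta)\asymp r\,\delta_\Sigma(\theta)$. Your treatment of the diagonal block, the intermediate block and the outward far-field block $\{|y|\ge|x|\}$ is correct. The genuine problem is the inward far-field block $\{|y|\le|x|,\ |x-y|\ge|x|/2\}$. There the Green bound gives the integrand $C\,M(x)\,|x|^{-(d+2\beta-\alpha)}\,|y|^{2\beta-\alpha}(1+\delta(y))^{-\varepsilon/2}$, and you weaken it to $C\,M(x)\,|y|^{-d}(1+\delta(y))^{-\varepsilon/2}$ using $|y|\le|x|$. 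That inequality is true but too lossy: the weakened radial profile is $r^{-1-\varepsilon/2}$, so the inward dyadic sum $\sum_k\bigl(2^{-k}|x|\bigr)^{-\varepsilon/2}$ is dominated by its smallest admissible scale $|y|\asymp1$ and is of order $1$, not $|x|^{-\varepsilon/2}$; equivalently,
\begin{equation*}
\int_{K\cap\{1\le|y|\le|x|\}}|y|^{-d}(1+\delta(y))^{-\varepsilon/2}\,\ud y\;\asymp\;\int_1^{|x|}r^{-1-\varepsilon/2}\,\ud r\;\le\;C .
\end{equation*}
Hence this block only yields the bound $C\,M(x)$, with no decay in $\delta(x)$, and your claimed contribution "of order $M(x)/|x|^{\varepsilon/2}$" is not justified for it; as written the proof does not establish the lemma.

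The repair is simply not to discard the factor $(|y|/|x|)^{d+2\beta-\alpha}$ before summing. Keeping the integrand $M(x)\,|x|^{-(d+2\beta-\alpha)}|y|^{2\beta-\alpha}(1+\delta(y))^{-\varepsilon/2}$, the shell $|y|\asymp2^{-k}|x|$ contributes, by your volume estimate, $C\,M(x)\,2^{-k(d+2\beta-\alpha-\varepsilon/2)}\,|x|^{-\varepsilon/2}$; since $d+2\beta-\alpha-\varepsilon/2>0$ the series converges and gives $C\,M(x)|x|^{-\varepsilon/2}\le C\,M(x)\delta(x)^{-\varepsilon/2}$, and the separate discussion of $|y|\le1$ becomes unnecessary. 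This is precisely the paper's computation for its integral $I_2$, where in polar coordinates $\int_0^{|x|}r^{d+2\beta-\alpha-1-\varepsilon/2}\,\ud r\le C|x|^{d+2\beta-\alpha-\varepsilon/2}$ is dominated by the top scale $r\asymp|x|$. With this correction your argument is sound and coincides, up to the layer-cake versus $A_k(r)$ bookkeeping, with the paper's proof.
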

\begin{proof}
We fix $A\ge1/2$.
For any $x\in K$ we split the integral in \eqref{eq:U-lambda-f} according to the regions from \eqref{eq:Green-f-killed-our-bounds} as follows
\begin{align}
\begin{split}
U_\Lambda (x)&=
\Bigg( \, \,
\int\limits_{
             \begin{subarray}{c}
         y\in K,\,    |x|\le|y|,\\
          |x-y|\ge A|x|
             \end{subarray}}
             +
     \int\limits_{
             \begin{subarray}{c}
          y\in K,\,  |y|\le|x|,\\
          |x-y|\ge A|x|
             \end{subarray}}
             +
        \int \limits_{
             \begin{subarray}{c}
             y\in K\\
             \frac{\delta(x)}{2}\le|x-y|\le A|x|
             \end{subarray}}
        +
        \int\limits_{
        \begin{subarray}{c}
        y\in K\\
        |x-y|\le\frac{\delta(x)}{2}
          \end{subarray}}
            \!\!\!\! \Bigg)
             G(x,y) \Lambda (y) \ud y\\
             &=
             I_1(x) + I_2(x) +I_3(x)+ I_4(x).
\end{split}
\end{align}
We observe that for any $r>0$ and $\theta \in \mathbb{S}^{d-1}$ there is a constant $c>0$ such that
\begin{align}\label{eq:delta-delta-compare}
\delta(r\theta )\le r \delta_\Sigma (\theta) \le c\, \delta (r\theta),
\end{align}
where $\delta_\Sigma (\theta)=\mathrm{dist}(\theta, \Sigma)$ and $\Sigma = K\cap \mathbb{S}^{d-1}$.
\medskip

\noindent \textit{1.\ Estimate of $I_1(x)$}. By using \eqref{eq:Green-f-killed-our-bounds} and \eqref{eq:M-Michalik-up} we obtain
\begin{align}\label{eq:I_2-long-est}
\begin{split}
I_1(x)
&\le C M(x)\!\!\!\!\!
\int\limits_{
             \begin{subarray}{c}
         y\in K,\,    |x|\le|y|,\\
          |x-y|\ge A|x|
             \end{subarray}}
             \frac{M^2(y)}{|y|^{d+2\beta -\alpha}}\frac{\ud y}{(1+\delta (y))^{\alpha +\varepsilon/2}}\\
&\le C_1 M(x)\!\!\!\!\!
\int\limits_{
             \begin{subarray}{c}
         y\in K,\,    |x|\le|y|
             \end{subarray}}
             \frac{\ud y}{|y|^d (1+\delta(y))^{\varepsilon/2}}\\
& \le C_2\, M(x)
	\int_{|x|}^\infty \frac{r^{d-1}}{r^d}\int_{\Sigma}\frac{1}{(1+\delta(r\theta))^{\varepsilon/2}}\, \ud \theta \, \ud r\\
	& \le C_3\, M(x)
	\int_{|x|}^\infty \frac{1}{r^{1+\varepsilon/2}}\, \ud r
\le C_4\, \frac{M(x)}{\delta(x)^{\varepsilon/2}},
\end{split}
\end{align}
where in the penultimate inequality we applied \eqref{eq:delta-delta-compare}.
\medskip

\noindent \textit{2.\ Estimate of $I_2(x)$}. We combine \eqref{eq:Green-f-killed-our-bounds} with \eqref{eq:M-Michalik-up} and this implies
\begin{align*}
I_2 (x) &\le C
        \frac{M(x) }{|x|^{d-\alpha +2\beta}}
\int\limits_{
             \begin{subarray}{c}
          y\in K,\,  |y|\le|x|,\\
          |x-y|\ge A|x|
             \end{subarray}}
    \frac{M^2(y)}{(1+\delta(y))^{\alpha +\varepsilon/2}}\ud y\\
    &\le C_1
            \frac{M(x) }{|x|^{d-\alpha +2\beta}}
\int\limits_{
             \begin{subarray}{c}
          y\in K,\,  |y|\le|x|,\\
          |x-y|\ge A|x|
             \end{subarray}}
    \frac{|y|^{2\beta -\alpha}}{(1+\delta(y))^{\varepsilon/2}}\ud y    \\
    &\le C_2
     \frac{M(x) }{|x|^{d-\alpha +2\beta}}
     \int_{0}^{|x|}r^{d+2\beta -\alpha -1}\int_{\Sigma}\frac{1}{(1+\delta (r\theta))^{\varepsilon/2}}\ud \theta\, \ud r\\
     &\le C_3 \frac{M(x) }{|x|^{d-\alpha +2\beta}}\int_{0}^{|x|} r^{d+2\beta -\alpha -1-\varepsilon/2}\ud r
     \le C_4
     \frac{M(x)}{\delta (x)^{\varepsilon/2}},
\end{align*}
where in the penultimate inequality we again applied \eqref{eq:delta-delta-compare}.
\medskip

\noindent \textit{3.\ Estimate of $I_3(x)$}. We first note that from the inequality $|x-y|\le A|x|$ it follows  that
\begin{align}\label{eq:I_3-help-1}
(1-A)|x| \le|y| \le(1+A)|x|.
\end{align}
Estimate \eqref{eq:Green-f-killed-our-bounds} combined with \eqref{eq:M-Michalik-up} and \eqref{eq:I_3-help-1} yields
\begin{align*}
I_3(x) &\le C
\frac{M(x)}{|x|^{\beta -\alpha /2}}
   \int \limits_{
             \begin{subarray}{c}
             y\in K\\
             \frac{\delta(x)}{2}\le|x-y|\le A|x|
             \end{subarray}}
             \frac{1}{|x-y|^d}
        \frac{M^2(y)}{|y|^{\beta -\alpha/2} (1+\delta (y))^{\alpha +\varepsilon/2}}\, \ud y\\
        &\le C_1
        M(x)    \int \limits_{
             \begin{subarray}{c}
             y\in K\\
             \frac{\delta(x)}{2}\le|x-y|\le A|x|
             \end{subarray}}
    \frac{1}{|x-y|^d}\frac{1}{(1+\delta(y))^{\varepsilon/2}}\ud y
\end{align*}
Next, we note that for any $r>0$ the Lebesgue measure of each of the sets
\begin{align*}
A_k (r) =\{ y:\, r\le|x-y|\le r+1,\, k\le\delta(y)\le k+1\},\quad k=0,1,\ldots ,r
\end{align*}
does not exceed $c_0r^{d-2}$, for a constant $c_0>0$ which does not depend neither  on $r$, nor on $k$.
This allows us to estimate the last integral as follows
\begin{align}\label{eq:U_Lambda-lem-help.i3}
\begin{split}
    \int \limits_{
        \begin{subarray}{c}
        y\in K\\
        \frac{\delta(x)}{2}\le|x-y|\le A|x|
        \end{subarray}}
        \frac{\ud y}{|x-y|^d (1+\delta (y))^{\varepsilon/2}}
&\le C
\int_{\frac{1}{2}\delta(x)}^\infty \frac{r^{d-2}}{r^d}\sum_{k=0}^{r+1} \frac{1}{(1+k)^{\varepsilon/2}}\, \ud r\\
&\le C_1
\int_{\frac{1}{2}\delta(x)}^\infty r^{-1-\varepsilon/2}\ud r = C_2\delta(x)^{-\varepsilon/2}.
\end{split}
\end{align}
This results in the bound $I_3(x) \le C M(x)/\delta(x)^{\varepsilon/2}$.

\noindent \textit{4.\ Estimate of $I_4(x)$}. We observe that
\begin{align}\label{eq:deltas-close}
|x-y|\le\frac{\delta(x)}{2}\, \Longrightarrow
\begin{cases}
\frac{|x|}{2}\le|y| \le\frac{3}{2}|x|;\\
\frac{\delta(x)}{2}\le\delta (y) \le\frac{3}{2}\delta(x).
\end{cases}
\end{align}
In view of  equations \eqref{eq:Green-f-killed-our-bounds}, \eqref{eq:M-Michalik-low}-\eqref{eq:M-Michalik-up} and \eqref{eq:deltas-close} we obtain
\begin{align*}
I_4 (x)&\le C
\frac{1}{\delta(x)^{\alpha+\varepsilon/2}}
	\int_{|x-y|\le\frac{\delta(x)}{2}}\frac{M(y)}{|x-y|^{d-\alpha}}\ud y
	\le C_1
\frac{1}{\delta(x)^{\alpha+\varepsilon/2}}
	\int_{|x-y|\le\frac{\delta(x)}{2}}\frac{\delta(y)^{\alpha/2}|y|^{\beta -\alpha/2}}{|x-y|^{d-\alpha}}\ud y\\
	&\le C_2
	\frac{M(x)}{\delta(x)^{\alpha +\varepsilon/2}}\int_{|x-y|\le\frac{\delta(x)}{2}}\frac{\ud y}{|x-y|^{d-\alpha}}
	\le C_3
	\frac{M(x)}{\delta(x)^{\alpha +\varepsilon/2}}\int_{0}^{\frac{\delta(x)}{2}}r^{\alpha -1}\ud r
	\le C_4 \frac{M(x)}{\delta(x)^{\varepsilon/2}}
\end{align*}
and the proof is completed.
\end{proof}

In the next lemma we find the upper bound for the gradient of $U_\Lambda(x)$.
\begin{lemma}\label{gradientestimate}
Assume that $\alpha>1$.
We have
\begin{align}\label{eq:U_Lambda-gradient-est}
\nabla U_\Lambda (x) \le C \frac{M(x)}{\delta (x)^{1+\varepsilon/2}},\quad x\in K,\quad \delta (x)\ge1.
\end{align}
\end{lemma}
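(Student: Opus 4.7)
The plan is to differentiate under the integral sign and then repeat the four-region splitting from the proof of Lemma~\ref{lem:U_Lambda-o-small-of-M}, but with the extra factor coming from the gradient estimate \eqref{eq:Green-killed-gradient-est}. Concretely, I would write
\[
\nabla U_\Lambda(x) \;=\; \int_K \nabla_x G(x,y)\,\Lambda(y)\,\ud y,
\]
with the interchange of $\nabla$ and $\int$ justified by \eqref{eq:Green-killed-gradient-est} together with local integrability of $|x-y|^{\alpha-d-1}$ near $x$ (valid since $\alpha>1$). Applying \eqref{eq:Green-killed-gradient-est} gives
\[
|\nabla U_\Lambda(x)|\;\le\; C\int_K \frac{G(x,y)}{\delta(x)\wedge|x-y|}\,\Lambda(y)\,\ud y.
\]

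Next I would split the domain of integration into the same four regions $I_1,I_2,I_3,I_4$ used in Lemma~\ref{lem:U_Lambda-o-small-of-M}, namely: $\{|x|\le|y|,\,|x-y|\ge A|x|\}$, $\{|y|\le|x|,\,|x-y|\ge A|x|\}$, $\{\delta(x)/2\le|x-y|\le A|x|\}$, and $\{|x-y|\le\delta(x)/2\}$. In the first three regions one has $|x-y|\ge \delta(x)/2$ (using $\delta(x)\le|x|$ and $A$ chosen so that $A|x|\ge\delta(x)$ in the far-field regions), so $\delta(x)\wedge|x-y|\ge\delta(x)/2$, and the contribution of each such region is bounded by $(2/\delta(x))$ times the bound already obtained in the corresponding estimate in Lemma~\ref{lem:U_Lambda-o-small-of-M}, producing a term of order $M(x)/\delta(x)^{1+\varepsilon/2}$.

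The main obstacle—and where the hypothesis $\alpha>1$ enters—is the inner region $|x-y|\le\delta(x)/2$. There $\delta(x)\wedge|x-y|=|x-y|$, and we use $G(x,y)\le C|x-y|^{\alpha-d}$ together with the estimates \eqref{eq:deltas-close}, $M(y)\asymp M(x)$ and $\delta(y)\asymp\delta(x)$, which give $\Lambda(y)\le C M(x)/\delta(x)^{\alpha+\varepsilon/2}$. Thus the contribution of this region is at most
\[
C\,\frac{M(x)}{\delta(x)^{\alpha+\varepsilon/2}}\int_{|x-y|\le\delta(x)/2}\frac{\ud y}{|x-y|^{d-\alpha+1}}
\;\le\; C\,\frac{M(x)}{\delta(x)^{\alpha+\varepsilon/2}}\int_0^{\delta(x)/2} r^{\alpha-2}\,\ud r.
\]
The last integral is finite precisely because $\alpha>1$, and equals a constant times $\delta(x)^{\alpha-1}$, yielding the desired bound $CM(x)/\delta(x)^{1+\varepsilon/2}$. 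Combining the four regions completes the proof. The hard part is really this inner integrability issue: it is the sole place that fails for $\alpha<1$, which is why the lemma is stated only for $\alpha>1$ and why separate estimates for $|U_\Lambda(x+y)-U_\Lambda(x)|$ are needed when $\alpha<1$ (cf.\ Lemma~\ref{lem:U-Lambda-diff-est}).
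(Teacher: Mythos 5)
Your proposal is correct and follows essentially the same route as the paper: the same four-region decomposition from Lemma~\ref{lem:U_Lambda-o-small-of-M}, the bound $|\nabla_x G(x,y)|\le C G(x,y)/\delta(x)$ on the three outer regions where $|x-y|\ge\delta(x)/2$, and $|\nabla_x G(x,y)|\le C|x-y|^{\alpha-d-1}$ on the inner region, where the integrability of $r^{\alpha-2}$ near $0$ is exactly where $\alpha>1$ enters. Your added remark justifying differentiation under the integral is a small point the paper leaves implicit, but otherwise the arguments coincide.
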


\begin{proof}
The proof is simiilar to the proof of Lemma \ref{lem:U_Lambda-o-small-of-M}. For any fixed $A\ge1/2$ we use the decomposition
\begin{align}
\begin{split}
\nabla U_\Lambda (x)&=
\Bigg( \, \,
\int\limits_{
             \begin{subarray}{c}
         y\in K,\,    |x|\le|y|,\\
          |x-y|\ge A|x|
             \end{subarray}}
             +
     \int\limits_{
             \begin{subarray}{c}
          y\in K,\,  |y|\le|x|,\\
          |x-y|\ge A|x|
             \end{subarray}}
             +
        \int \limits_{
             \begin{subarray}{c}
             y\in K\\
             \frac{\delta(x)}{2}\le|x-y|\le A|x|
             \end{subarray}}
        +
        \int\limits_{
        \begin{subarray}{c}
        y\in K\\
        |x-y|\le\frac{\delta(x)}{2}
          \end{subarray}}
            \!\!\!\! \Bigg)
           \nabla  G(x,y) \Lambda (y) \ud y\\
             &=
             I_1(x) + I_2(x) +I_3(x)+ I_4(x).
\end{split}
\end{align}
To estimate integrals $I_i (x)$ for $i=1,2,3$ we observe that in view of \eqref{eq:Green-killed-gradient-est},
\begin{align*}
\nabla G(x,y) \le C \frac{G(x,y)}{\delta (x)},\quad |x-y|\ge\delta (x)/2.
\end{align*}
Using this estimate in conjunction with Lemma \ref{lem:Green-bounds-our} and by repeating the arguments from the proof of Lemma \ref{lem:U_Lambda-o-small-of-M}, we can easily show that
\begin{align*}
I_i (x) \le C \frac{M(x)}{\delta (x)^{1+\varepsilon/2}},\quad i=1,2,3.
\end{align*}
To estimate $I_4(x)$ it is enough to notice that, by \eqref{eq:Green-killed-gradient-est},
\begin{align}\label{eq:Green-gradient-help}
\nabla G(x,y) \le C \frac{1}{|x-y|^{d-\alpha +1}},\quad |x-y|\le\frac{\delta(x)}{2}.
\end{align}
We can again repeat the procedure for $I_4(x)$ from the proof of Lemma \ref{lem:U_Lambda-o-small-of-M} and combine it with \eqref{eq:Green-gradient-help} to obtain
\begin{align*}
I_4(x) \le C \frac{M(x)}{\delta (x)^{1+\varepsilon/2}},
\end{align*}
and this allows us to conclude the required estimate in \eqref{eq:U_Lambda-gradient-est}.
\end{proof}

In the next lemmas we analyse how
$|U_\Lambda (x+y) - U_\Lambda (x)|$ behaves asymptotically (for large $\delta(x)$).
\begin{lemma}\label{lem:U-Lambda-diff-est}
Assume that $0<\alpha <1$. Then there is a constant $C>0$ such that for sufficiently large $\delta (x)$
we have
\begin{align}\label{eq:U_Lambda-diff-est}
|U_\Lambda (x+y) - U_\Lambda (x)|\le C \Lambda (x) |y|^{\alpha},\quad |y|\le\frac{\delta (x)}{2}.
\end{align}
\end{lemma}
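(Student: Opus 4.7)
The plan is to expand the difference as
\begin{align*}
U_\Lambda(x+y)-U_\Lambda(x)=\int_K\bigl(G(x+y,z)-G(x,z)\bigr)\Lambda(z)\,\ud z
\end{align*}
and split the integration at the threshold $|z-x|=2|y|$. The standing hypothesis $|y|\le\delta(x)/2$ will be used repeatedly: it ensures that the whole segment $[x,x+y]$ lies in $K$ with $\delta(\xi)\asymp\delta(x)$ for $\xi$ on it, and on the ball $B(x,\delta(x)/2)$ one also has $|z|\asymp|x|$, $\delta(z)\asymp\delta(x)$, and hence by \eqref{eq:M-Michalik-up}--\eqref{eq:M-Michalik-low} both $M(z)\asymp M(x)$ and $\Lambda(z)\asymp\Lambda(x)$.

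On the near-diagonal region $\{|z-x|\le 2|y|\}$, both $|z-x|$ and $|z-x-y|$ stay below $3|y|\le\delta(x)/2$, so the last case of \eqref{eq:Green-f-killed-our-bounds} gives $G(x,z)\le C|z-x|^{\alpha-d}$ and similarly for $G(x+y,z)$. Bounding $\Lambda(z)$ by $C\Lambda(x)$ and integrating in polar coordinates yields
\begin{align*}
\int_{|z-x|\le 2|y|}\!\!\bigl[G(x+y,z)+G(x,z)\bigr]\Lambda(z)\,\ud z
\le C\Lambda(x)\int_0^{3|y|}\! r^{\alpha-1}\,\ud r
\le C\Lambda(x)|y|^\alpha,
\end{align*}
which is already of the target order.

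On the far region $\{|z-x|>2|y|\}$, I would apply the mean value theorem $|G(x+y,z)-G(x,z)|\le|y|\,|\nabla_\xi G(\xi,z)|$ for some $\xi\in[x,x+y]$, together with the gradient bound \eqref{eq:Green-killed-gradient-est}, and further split at $|z-x|=\delta(x)/8$. On the intermediate piece $\{2|y|<|z-x|\le\delta(x)/8\}$ one checks $|\xi-z|\asymp|z-x|$ and $|\xi-z|\le\delta(\xi)/2$, so $G(\xi,z)\le C|z-x|^{\alpha-d}$ and $\delta(\xi)\wedge|\xi-z|\asymp|z-x|$. Using $\Lambda(z)\asymp\Lambda(x)$, the contribution is at most
\begin{align*}
C|y|\Lambda(x)\int_{2|y|}^{\delta(x)/8}\! r^{\alpha-2}\,\ud r
\le C\Lambda(x)\,|y|\cdot|y|^{\alpha-1}=C\Lambda(x)|y|^\alpha,
\end{align*}
where the assumption $\alpha<1$ is exactly what makes $r^{\alpha-2}$ non-integrable at the origin, so the integral is controlled by its lower endpoint. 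On the outer piece $\{|z-x|>\delta(x)/8\}$ one has $\delta(\xi)\wedge|\xi-z|\ge c\delta(x)$, hence
\begin{align*}
\frac{C|y|}{\delta(x)}\int_{|z-x|>\delta(x)/8}G(\xi,z)\Lambda(z)\,\ud z
\le \frac{C|y|}{\delta(x)}U_\Lambda(\xi)
\le \frac{C|y|M(x)}{\delta(x)^{1+\varepsilon/2}}
\end{align*}
by Lemma \ref{lem:U_Lambda-o-small-of-M} applied at $\xi$, together with $M(\xi)\asymp M(x)$ and $\delta(\xi)\asymp\delta(x)$. Rewriting the right-hand side as $\Lambda(x)|y|^\alpha\bigl(|y|/\delta(x)\bigr)^{1-\alpha}$ and invoking $|y|\le\delta(x)/2$ and $1-\alpha>0$ bounds it by $C\Lambda(x)|y|^\alpha$, as required.

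The main obstacle is the intermediate piece: the non-integrability of $r^{\alpha-2}$ at the origin for $\alpha<1$ is precisely what ruled out a pointwise gradient estimate in Lemma \ref{gradientestimate} (which was proved only for $\alpha>1$), and it is simultaneously the mechanism generating the Hölder exponent $\alpha$ here. All remaining steps amount to routine applications of the Green function estimates \eqref{eq:Green-f-killed-our-bounds}--\eqref{eq:Green-killed-gradient-est}, the Martin kernel bounds \eqref{eq:M-Michalik-up}--\eqref{eq:M-Michalik-low}, and the already-established Lemma \ref{lem:U_Lambda-o-small-of-M}.
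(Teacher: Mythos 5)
Your overall scheme (free Green function bound near the singularities, mean value theorem plus the gradient bound \eqref{eq:Green-killed-gradient-est} away from them, and $|y|\le\delta(x)/2$, $\alpha<1$ to absorb the far contribution) is essentially the paper's, which splits into five regions $D_1,\dots,D_5$; your shortcut of bounding the outermost piece by $U_\Lambda$ and invoking Lemma \ref{lem:U_Lambda-o-small-of-M} replaces the paper's explicit estimates on $D_4,D_5$. However, your near-region step has a genuine gap. With the threshold $|z-x|\le 2|y|$ and only $|y|\le\delta(x)/2$ assumed, the claim ``$3|y|\le\delta(x)/2$'' is false (one only gets $3|y|\le\tfrac32\delta(x)$), and, more importantly, the bound $\Lambda(z)\le C\Lambda(x)$ fails on part of this region: when $|y|\in(\delta(x)/4,\delta(x)/2]$ the ball $B(x,2|y|)$ reaches all the way to $\partial K$, and for $z$ there with $\delta(z)=O(1)$ one has $\Lambda(z)\asymp M(z)\lesssim |x|^{\beta-\alpha/2}$ while $\Lambda(x)\asymp|x|^{\beta-\alpha/2}\delta(x)^{-\alpha/2-\varepsilon/2}$, so $\Lambda(z)/\Lambda(x)$ can be of order $\delta(x)^{\alpha/2+\varepsilon/2}$. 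Your stated justification (comparability on $B(x,\delta(x)/2)$) does not cover $B(x,2|y|)$. The conclusion of the step is still true, but it needs either smaller near-region radii — the paper takes two balls of radius $\tfrac34|y|$ around $x$ and around $x+y$, which keeps $\delta(z)\gtrsim\delta(x)$ on them — or an additional boundary-layer argument (slicing in $\delta(z)$, as in the sets $A_k(r)$ used elsewhere in the paper) for the thin part of $B(x,2|y|)$ near $\partial K$, where one must instead exploit that $|z-x|$ and $|z-x-y|$ are there of order $\delta(x)$.

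A second, smaller flaw is in the outer piece: the point $\xi$ produced by the mean value theorem depends on $z$, so you cannot bound $\int_{|z-x|>\delta(x)/8}G(\xi,z)\Lambda(z)\,\ud z$ by $U_\Lambda(\xi)$ for a single $\xi$ and then apply Lemma \ref{lem:U_Lambda-o-small-of-M} ``at $\xi$''. This is repairable: for $z$ in that region and $\xi\in[x,x+y]$ one has $|\xi-z|\asymp|x-z|$, $\delta(\xi)\asymp\delta(x)$, $|\xi|\asymp|x|$, so the two-sided estimate \eqref{eq:Green-f-killed-bounds} gives $G(\xi,z)\le C\,G(x,z)$ uniformly, after which your bound by $U_\Lambda(x)\le CM(x)/\delta(x)^{\varepsilon/2}$ goes through; alternatively one follows the paper's direct estimates on $D_4$ and $D_5$. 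With these two repairs your argument is correct and of the same nature as the published proof.
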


\begin{proof}
According to \eqref{eq:U-lambda-f} we have
\begin{align}\label{eq:U_Lambda-diff-est-1}
\left\vert U_\Lambda (x+y) - U_\Lambda (x) \right\vert \le \int_{K} \left\vert G(x+y,z) - G(x,z)\right\vert \Lambda(z)\, \ud z.
\end{align}
To obtain the estimate in \eqref{eq:U_Lambda-diff-est} we split the integral in \eqref{eq:U_Lambda-diff-est-1} into five parts and we consider the corresponding cases below.
\medskip

\noindent \textit{1.\ Estimate in $D_1 = \{z\in K:\, |z-x|\le\frac{3}{4}|y|\}$}.
We start by noting that as $|y|\le\delta(x)/2$ and $|z-x|\le3\delta(x)/8$, we have
\[
|z|\le\left(1+\frac{3}{8}\right)|x|,\quad
\delta (z)\ge\delta(x)-|z-x|\ge\frac{5}{8}\delta (x),\quad z\in D_1.
\]
It follows that $\Lambda (z)\le C\Lambda (x)$, for all $z\in D_1$. We bound the killed Green function with the full Green function of the process $Z(t)$ and we obtain
\begin{align*}
\int_{D_1} \left\vert G(x+y,z) - G(x,z)\right\vert \Lambda(z)\, \ud z
&\leq
C\Lambda (x)
\int_{D_1}\left( \frac{1}{|x+y-z|^{d-\alpha}}+ \frac{1}{|x-z|^{d-\alpha}} \right) \ud z.
\end{align*}
It evidently holds $|x+y-z|\ge|y|/4$, for $z\in D_1$. Hence
\begin{align*}
\int_{D_1} \left\vert G(x+y,z) - G(x,z)\right\vert &\Lambda(z)\, \ud z\\
&\leq
C_1 \Lambda (x)
\int_{0}^{3|y|/4}\left(|y|^{ -d+\alpha} +  r^{-d+\alpha}\right)r^{d-1}\ud r
\le C_2\Lambda(x)|y|^\alpha.
\end{align*}
\noindent \textit{2.\ Estimate in $D_2 = \{z\in K:\, |x+y-z|\le\frac{3}{4}|y|\}$}. For $z\in D_2$ we have
$|z|\le(1+7/8)|x|$ and $\delta(z)\ge\delta (x)-|x-z|\ge\delta(x)/8$, and consequently, $\Lambda(z)\le C\Lambda (x)$, for all $z\in D_2$.
Further, $|x-z|\ge|y|/4$, for $z\in D_2$, thus we can argue similarly as in the case of $D_1$ to show that
\begin{align*}
\int_{D_2} \left\vert G(x+y,z) - G(x,z)\right\vert \Lambda(z)\, \ud z
\le C_1\Lambda(x)|y|^\alpha.
\end{align*}

\noindent \textit{3.\ Estimate in $D_3 = \{z\in K:\, |x-z|\le\frac{{3}\delta(x)}{{4}}\}\cap (D_1\cup D_2)^c$}.
We first notice that the Taylor formula ensures that for $z\in D_3$ and $|y|\le\delta(x)/2$,
\begin{align}\label{eq:Green-killed-Taylor}
\left\vert G(x+y,z) - G(x,z)\right\vert \le|y||\nabla_x G(x+\varrho_0y,z)|,\quad \varrho_0 \in (0,1).
\end{align}
Further, for $z\in D_3$ we have $|z|\le{7}|x|/{4}$ and $\delta(x)/{4}\le\delta(z) \le7\delta(x)/4$, and whence $\Lambda (z)\le C\Lambda (x)$. By \eqref{eq:Green-killed-gradient-est} and by bounding the killed Green function with the full Green function we arrive at
\begin{align*}
\int_{D_3} \left\vert G(x+y,z) - G(x,z)\right\vert &\Lambda(z)\, \ud z\\
&\le C \Lambda (x) |y|\int_{D_3}\frac{1}{|x+\varrho_0 y-z|^{d-\alpha}}\frac{1}{\delta(x+\varrho_0 y)\wedge |x+\varrho_0 y-z|}\ud z.
\end{align*}
Next, we observe that for $z\in D_3$
\begin{align*}
|x+\varrho_0 y-z|\geq
\begin{cases}
\frac{1}{3}|x-z|,& \varrho_0\le\frac{1}{2};\\
\frac{1}{3}|x+y-z|,& \varrho_0 >\frac{1}{2}.
\end{cases}
\end{align*}
Further, for $z\in D_3$, $|x+y-z|\le\delta(x)$ and  $\delta (x+\varrho_0 y)\ge\delta(x)/2$, for $|y|\le\delta(x)/2$. These estimates yield
\begin{align*}
\frac{1}{|x+\varrho_0 y-z|^{d-\alpha}}\frac{1}{\delta(x+\varrho_0 y)\wedge |x+\varrho_0 y-z|}
&
\le C
\frac{1}{\left( |x-z|\wedge |x+y-z|\right)^{d-\alpha +1}}\\
&\le C_1
\begin{cases}
\frac{1}{|y|^{d-\alpha +1}},& |x-z|\le3|y|;\\
\frac{1}{|x-z|^{d-\alpha +1}},& |x-z|>3|y|
\end{cases}
\end{align*}
and this allows us to conclude that
\begin{align}\label{eq:int-over-D_3}
\begin{split}
\int_{D_3} &\left\vert G(x+y,z) - G(x,z)\right\vert \Lambda(z)\, \ud z\\
&\le C_1
\Lambda (x) |y|
\left\{ |y|^{\alpha -1} +
\int_{|x-z|>3|y|}
\frac{1}{|x-z|^{d-\alpha +1}}\ud z
\right\}\le C_2 \Lambda (x) |y|^\alpha .
\end{split}
\end{align}

\noindent \textit{4.\ Estimate in $D_4=\{z\in K:\, \frac{3}{4}\delta(x)<|x-z|\le3|x|\}\cap (D_1\cup D_2 \cup D_3)^c$}. We again apply the Taylor formula from \eqref{eq:Green-killed-Taylor} for $z\in D_4$ and we combine it with \eqref{eq:Green-f-killed-bounds} and \eqref{eq:Green-killed-gradient-est} to bound the gradient of $G$ as follows
\begin{align*}
|\nabla_x G(x+\varrho_0 y,z)|
\le C\,
\frac{\delta(x+\varrho_0 y)^{\alpha/2}\delta(z)^{\alpha/2}}{|x+\varrho_0 y-z|^d}
\frac{1}{\delta(x+\varrho_0 y)\wedge |x+\varrho_0 y -z|}.
\end{align*}
Since for $z\in D_4$ it holds $|x+\varrho_0 y-z|\ge\delta(x)/4$ and $\delta (x+\varrho_0 y)\le3\delta(x)/2$, we infer
\begin{align*}
|\nabla_x G(x+\varrho_0 y,z)|
\le C_1\,
\delta(x)^{\alpha/2 -1} \frac{\delta(z)^{\alpha/2}}{|x+\varrho_0 y-z|^d}
\le C_2\,
\delta(x)^{\alpha/2 -1} \frac{\delta(z)^{\alpha/2}}{|x-z|^d},
\end{align*}
where in the last inequality we used the fact that $|x+\varrho_0 y-z|\ge|x-z|/3$, for $z\in D_4$. This and \eqref{eq:M-Michalik-up}-\eqref{eq:M-Michalik-low} imply
\begin{align*}
\int_{D_4} \left\vert G(x+y,z) - G(x,z)\right\vert \Lambda(z)\, \ud z
&\le C_3
\delta(x)^{\alpha/2-1}|y| \int_{D_4} \frac{\delta(z)^{\alpha/2}}{|x-z|^d} \Lambda (z)\ud z\\
&\le C_4\delta(x)^{\alpha/2-1}|y| \int_{D_4} \frac{|z|^{\beta -\alpha/2}}{|x-z|^d (1+\delta (z))^{\varepsilon/2}}\, \ud z \\
&\le C_5\delta(x)^{\alpha/2-1} |x|^{\beta -\alpha/2} |y| \int_{D_4} \frac{\ud z}{|x-z|^d (1+\delta (z))^{\varepsilon/2}} \\
&\le C_6 \frac{M(x)}{\delta (x)} |y| \int_{D_4} \frac{\ud z}{|x-z|^d (1+\delta (z))^{\varepsilon/2}} .
\end{align*}
Next, we note that for any $r>0$ the Lebesgue measure of each of the sets
\begin{align*}
A_k (r) =\{ z:\, r\le|x-z|\le r+1,\, k\le\delta(z)\le k+1\},\quad k=0,1,\ldots ,r
\end{align*}
does not exceed $c_0r^{d-2}$, for a constant $c_0>0$ which does not depend on $r$, nor on $k$.
This allows us to estimate the last integral as follows
\begin{align}\label{eq:U_Lambda-lem-help}
\begin{split}
\int_{D_4} \frac{\ud z}{|x-z|^d (1+\delta (z))^{\varepsilon/2}}
&\le C
\int_{\frac{3}{4}\delta(x)}^\infty \frac{r^{d-2}}{r^d}\sum_{k=0}^{r+1} \frac{1}{(1+k)^{\varepsilon/2}}\, \ud r\\
&\le C_1
\int_{\frac{3}{4}\delta(x)}^\infty r^{-1-\varepsilon/2}\ud r = C_2\delta(x)^{-\varepsilon/2}
\end{split}
\end{align}
and whence
\begin{align}\label{eq:est-in-D_4-for-Lambda-error}
\int_{D_4} \left\vert G(x+y,z) - G(x,z)\right\vert \Lambda(z)\, \ud z
&\le C
\frac{M(x)}{\delta(x)^{1+\varepsilon/2}}|y|\\
&\le\frac{C}{2}\frac{M(x)}{\delta (x)^{\alpha +\varepsilon/2}}|y|^\alpha
\le C_1
\Lambda (x)|y|^\alpha.\nonumber
\end{align}

\noindent \textit{5.\ Estimate in $D_5=\{z\in K:\, |x-z| > 3|x|\}\cap (D_1\cup D_2 \cup D_3 \cup D_4)^c$}. In this region we use the bound from \eqref{eq:Green-f-killed-our-bounds} in the form
\begin{align*}
G(x+\varrho_0 y,z )\le C \frac{M(x+\varrho_0 y)M(z)}{|z|^{d-\alpha
 +2\beta}}
 \le C_1 \frac{M(x)M(z)}{|z|^{d-\alpha
 +2\beta}},\quad z\in D_5,
\end{align*}
which together with \eqref{eq:Green-killed-gradient-est} yields
\begin{align*}
\left\vert \nabla G(x+\varrho_0 y)\right\vert
\le C
\frac{M(x)M(z)}{\delta(x+\varrho_0y)|z|^{d-\alpha
 +2\beta}}
 \le C_1
\frac{M(x)M(z)}{\delta(x+\varrho_0y)|z|^{d-\alpha
 +2\beta}},\quad z\in D_5.
\end{align*}
We thus obtain
\begin{align*}
\int_{D_5} \left\vert G(x+y,z) - G(x,z)\right\vert \Lambda(z)\, \ud z
&\le C
\frac{M(x)}{\delta(x)	}|y| \int_{D_5}
\frac{M^2(z)}{|z|^{d-\alpha +2\beta}(1+\delta(z))^{\alpha +\varepsilon/2}}\, \ud z\\
&\le C_1
\frac{M(x)}{\delta(x)	}|y|
 \int_{D_5} \frac{\ud z}{|z|^{d}(1+\delta(z))^{\varepsilon/2}}.
\end{align*}
To estimate the last integral one can proceed similarly as in \eqref{eq:U_Lambda-lem-help} by using sets $A_k(r)$ and this would imply
\begin{align*}
 \int_{D_5} \frac{\ud z}{|z|^{d}(1+\delta(z))^{\varepsilon/2}}
\le C
\int_{3|x|}^\infty \frac{1}{r^2}\sum_{k=0}^{r+1}(1+k)^{-\varepsilon/2}\ud r\le C_1|x|^{-\varepsilon/2}.
\end{align*}
We finally conclude that
\begin{align}\label{eq:est-in-D_5-for-Lambda-error}
\int_{D_5} \left\vert G(x+y,z) - G(x,z)\right\vert \Lambda(z)\, \ud z
\leq
C\frac{M(x)}{\delta(x)^{1+\varepsilon/2}}|y|
\le C_1
\Lambda (x)|y|^\alpha
\end{align}
and the proof is finished.
\end{proof}

\begin{lemma}\label{lem:U_Lambda-diff-alpha>1}
Assume that $1< \alpha <2$. Then there is a constant $C>0$ such that for sufficiently large $\delta (x)$
it holds
\begin{align}\label{eq:U_Lambda-diff-est-two}
|U_\Lambda (x+y) - U_\Lambda (x) -\nabla U_\Lambda (x)\cdot y|\le C \Lambda (x) |y|^{\alpha},\quad |y|\le\frac{\delta (x)}{2}.
\end{align}
\end{lemma}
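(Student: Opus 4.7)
Since Lemma \ref{gradientestimate} provides an integrable pointwise bound on $|\nabla_x G(x,z)|\Lambda(z)$, one may differentiate \eqref{eq:U-lambda-f} under the integral sign to obtain $\nabla U_\Lambda(x)=\int_K\nabla_x G(x,z)\Lambda(z)\,\ud z$. The quantity to estimate then admits the integral representation
\[
U_\Lambda(x+y)-U_\Lambda(x)-\nabla U_\Lambda(x)\cdot y\;=\;\int_K R(x,y,z)\,\Lambda(z)\,\ud z,
\]
where $R(x,y,z):=G(x+y,z)-G(x,z)-\nabla_x G(x,z)\cdot y$. The plan is to split this integral over the same five regions $D_1,\ldots,D_5$ used in the proof of Lemma \ref{lem:U-Lambda-diff-est} and to reuse the comparison $\Lambda(z)\le C\Lambda(x)$ already established there on $D_1\cup D_2\cup D_3$.

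On $D_1\cup D_2$, where $|x-z|$ and $|x+y-z|$ are both at most $7|y|/4<\delta(x)$, a Taylor expansion of $G$ is not available (its Hessian blows up too fast), so I estimate each of the three pieces of $R$ separately. The contributions of $G(x+y,z)$ and of $G(x,z)$ yield $C\Lambda(x)|y|^\alpha$ exactly as in the proof of Lemma \ref{lem:U-Lambda-diff-est}. For the new linear piece, the gradient estimate \eqref{eq:Green-killed-gradient-est} combined with $|x-z|<\delta(x)$ gives $|\nabla_x G(x,z)\cdot y|\le C|y|/|x-z|^{d-\alpha+1}$, and integration against $\Lambda(z)\,\ud z$ reduces the bound to $C\Lambda(x)|y|\int_0^{C|y|}r^{\alpha-2}\,\ud r$, which is finite precisely because $\alpha>1$ and equals $C\Lambda(x)|y|^\alpha$.

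On $D_3$, $D_4$, $D_5$, I apply the second-order Taylor remainder inequality $|R(x,y,z)|\le\tfrac12|y|^2\max_{\varrho\in[0,1]}\|D^2_x G(x+\varrho y,z)\|$ and use the Hessian estimate \eqref{eq:Green-killed-second-deriv-est} in place of the gradient estimate. On $D_3$, the geometric inequalities $|x+\varrho y-z|\ge c(|x-z|\wedge|x+y-z|)$ and $\delta(x+\varrho y)\ge\delta(x)/2$ established in the proof of Lemma \ref{lem:U-Lambda-diff-est}, together with $|x-z|\le 3\delta(x)/4$, imply $\delta(x+\varrho y)\wedge|x+\varrho y-z|\ge c(|x-z|\wedge|x+y-z|)$, leading to $\|D^2_x G(x+\varrho y,z)\|\le C/(|x-z|\wedge|x+y-z|)^{d-\alpha+2}$. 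Splitting $D_3$ according to whether $|x-z|\le 3|y|$ or not, the factor $|y|^2$ combines with the radial density $r^{\alpha-3}$ to give $C\Lambda(x)|y|^\alpha$; here $\alpha<2$ is exactly what makes $|y|^2\int_{3|y|}^{\infty}r^{\alpha-3}\,\ud r$ finite and of order $|y|^\alpha$.

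On $D_4\cup D_5$, since $|x-z|\ge 3\delta(x)/4$ and $|y|\le\delta(x)/2$, one has $\delta(x+\varrho y)\wedge|x+\varrho y-z|\ge c\delta(x)$, so the Hessian acquires an extra factor $\delta(x)^{-2}$ compared to the gradient estimate used in the analogous regions of the proof of Lemma \ref{lem:U-Lambda-diff-est}. Repeating verbatim the $A_k(r)$ radial computations carried out there yields
\[
\int_{D_4\cup D_5}|R(x,y,z)|\,\Lambda(z)\,\ud z\;\le\;C\,\frac{M(x)|y|^2}{\delta(x)^{2+\varepsilon/2}},
\]
which, in view of $|y|\le\delta(x)/2$, is dominated by $C\Lambda(x)|y|^\alpha(|y|/\delta(x))^{2-\alpha}\le C\Lambda(x)|y|^\alpha$ because $\alpha<2$. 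The main obstacle is the bookkeeping of integrability exponents in $D_3$: both conditions $\alpha>1$ (inherited from $D_1\cup D_2$) and $\alpha<2$ (from the radial behaviour of $D^2 G$ after the second-order Taylor remainder) have to cooperate, and it is only after absorbing the linear compensator $\nabla U_\Lambda(x)\cdot y$ that the Hessian singularity near $z=x$ becomes integrable against $|y|^2$.
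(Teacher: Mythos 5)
Your proposal is correct and follows essentially the same route as the paper: the same decomposition into $D_1,\ldots,D_5$, a triangle-inequality treatment on $D_1\cup D_2$ (difference part as in Lemma \ref{lem:U-Lambda-diff-est}, linear part via \eqref{eq:Green-killed-gradient-est}, with $\alpha>1$ ensuring integrability of the gradient singularity), and the second-order Taylor remainder combined with the Hessian bound \eqref{eq:Green-killed-second-deriv-est} on $D_3$, $D_4$, $D_5$. You merely spell out the exponent bookkeeping that the paper's sketch leaves implicit.
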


\begin{proof}
The proof goes along the same lines as the proof of Lemma \ref{lem:U-Lambda-diff-est} and we briefly sketch the main steps. We start we the upper bound
\begin{multline*}
|U_\Lambda (x+y) - U_\Lambda (x) -\nabla U_\Lambda (x)\cdot y|\\
\leq
\int_K |G(x+y,z) - G(x,z) -y\cdot \nabla_xG(x,z)|\Lambda(z) \ud z
\end{multline*}
and then we split the last integral over the same regions $D_i$, for $i=1,\ldots ,5$. In regions $D_1$ and $D_2$ we first apply the triangle inequality and then we do the same reasoning as before, separately for $ |G(x+y,z) - G(x,z) |$, and for  $|y||\nabla_xG(x+y,z)|$, where for the part involving the gradient we use the estimate from \eqref{eq:Green-killed-gradient-est}.

In the three remaining regions we  repeat the same arguments as in Lemma \ref{lem:U-Lambda-diff-est}, but this time we use the Taylor expansion in the form
\begin{align*}
\left\vert
G(x+y,z) - G(x,z) -y\cdot \nabla_xG(x,z)
\right\vert
\le C|y|^2 \max_{i,j=1,\ldots n}|D^2_{x_ix_j}G(x+\varrho_0 y,z)|,
\end{align*}
which holds for some $\varrho_0\in (0,1)$, and we combine it with the estimate from \eqref{eq:Green-killed-second-deriv-est}.
\end{proof}

\begin{lemma}\label{lem:U-Lambda-Exp-U-est}
There exist two constants $c_1,c_2>0$ such that
\begin{align}\label{eq:error-U-lambda}
\bbE [U_\Lambda (x+Z)] - U_\Lambda(x) \le-c_1\Lambda(x)
+ c_2 \frac{M(x)}{\delta(x)^{\alpha +\varepsilon}},\quad x\in K\text{ with }\delta(x)\ge1.
\end{align}
\end{lemma}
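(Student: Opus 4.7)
The plan is to exploit the fact that $U_\Lambda$ is, up to boundary corrections, a potential for the source $\Lambda$ under the stable semigroup killed on exiting $K$. Since $G(x,y)=\int_0^\infty p_K(t,x,y)\,\ud t$, writing $\tau(x):=\inf\{t\ge 0:\,x+Z(t)\notin K\}$ one has
\begin{equation*}
U_\Lambda(x)=\int_0^\infty \bbE\bigl[\Lambda(x+Z(s));\,\tau(x)>s\bigr]\,\ud s.
\end{equation*}
Applying the Markov property at $t=1$ to the integral over $[1,\infty)$ gives
\begin{equation*}
\bbE\bigl[U_\Lambda(x+Z);\,\tau(x)>1\bigr]=U_\Lambda(x)-\int_0^1 \bbE\bigl[\Lambda(x+Z(s));\,\tau(x)>s\bigr]\,\ud s,
\end{equation*}
and since $U_\Lambda\equiv 0$ on $K^c$, splitting $\bbE[U_\Lambda(x+Z)]$ according to $\{\tau(x)>1\}$ versus $\{\tau(x)\le 1,\,x+Z\in K\}$ yields the decomposition
\begin{equation*}
\bbE[U_\Lambda(x+Z)]-U_\Lambda(x)=-I_1(x)+I_2(x),
\end{equation*}
with
\begin{align*}
I_1(x)&:=\int_0^1 \bbE\bigl[\Lambda(x+Z(s));\,\tau(x)>s\bigr]\,\ud s,\\
I_2(x)&:=\bbE\bigl[U_\Lambda(x+Z);\,\tau(x)\le 1,\,x+Z\in K\bigr].
\end{align*}

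For the principal (negative) contribution, consider the event $A:=\{\sup_{0\le s\le 1}|Z(s)|\le \delta(x)/2\}$. Since $\delta(x)\le|x|$, on $A$ one has $|x+Z(s)|\in[|x|/2,3|x|/2]$ and $\delta(x+Z(s))\in[\delta(x)/2,3\delta(x)/2]$ for every $s\in[0,1]$, hence $\tau(x)>1$. Combining \eqref{eq:M-Michalik-up}--\eqref{eq:M-Michalik-low} gives $M(x+Z(s))\asymp M(x)$, and therefore $\Lambda(x+Z(s))\asymp \Lambda(x)$ uniformly on $A\times[0,1]$. As $\delta(x)\ge 1$, self-similarity yields $\bbP(A)\ge \bbP(\sup_{0\le s\le 1}|Z(s)|\le 1/2)=:p_0>0$, whence $I_1(x)\ge c_1\Lambda(x)$.

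For the error term, I would condition via the strong Markov property at $\tau(x)$. On the event $\{\tau(x)=s,\,x+Z(s)=y\}$ with $y\in K^c$, the conditional distribution of $x+Z(1)$ is that of $y+Z_{1-s}$ for an independent copy of the stable process at time $1-s$, so
\begin{equation*}
I_2(x)=\int_0^1\!\!\int_{K^c}\bbE\bigl[U_\Lambda(y+Z_{1-s});\,y+Z_{1-s}\in K\bigr]\,\bbP\bigl(\tau(x)\in\ud s,\,x+Z(s)\in\ud y\bigr).
\end{equation*}
On the sub-event where $\delta(y+Z_{1-s})\ge 1$, Lemma~\ref{lem:U_Lambda-o-small-of-M} gives the bound $U_\Lambda(y+Z_{1-s})\le C\,M(y+Z_{1-s})/\delta(y+Z_{1-s})^{\varepsilon/2}$, which allows the three regional estimates \eqref{eq:J-est-1}--\eqref{eq:J-est-3} used to bound the term $J$ in the proof of Lemma~\ref{lem:error-estimate} to be reused almost verbatim, with each step acquiring an additional factor $\delta(\cdot)^{-\varepsilon/2}$. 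Integrating against the stable density bound \eqref{eq:stable-denisty-bound} produces an upper bound of order $M(x)/\delta(x)^{3\alpha/2+\varepsilon/2}$, which is dominated by $M(x)/\delta(x)^{\alpha+\varepsilon}$ for every $\varepsilon\in(0,\alpha)$. The complementary sub-event $\{\delta(y+Z_{1-s})<1\}$ is dealt with by a crude polynomial estimate on $U_\Lambda$ (itself implied by the homogeneity bound on $M$ together with the killed Green function estimates of Lemma~\ref{lem:Green-bounds-our}), combined with the tail bound $\bbP(|Z_1|\ge \delta(x)/2)\le C\delta(x)^{-\alpha}$ and the fact that $\tau(x)\le 1$ forces $\sup_{s\le 1}|Z(s)|\ge\delta(x)$; this contribution is of lower order than $M(x)/\delta(x)^{\alpha+\varepsilon}$.

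The main obstacle is the bookkeeping in $I_2(x)$: one has to re-run the three regional splits of the $J$-term analysis while tracking carefully how the extra factor $\delta^{-\varepsilon/2}$ from Lemma~\ref{lem:U_Lambda-o-small-of-M} propagates through the convolution with the stable density, and separately control the boundary sub-event $\{\delta(y+Z_{1-s})<1\}$ where Lemma~\ref{lem:U_Lambda-o-small-of-M} does not directly apply.
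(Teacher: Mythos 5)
Your proposal is correct and follows essentially the same route as the paper: the identity $\bbE[U_\Lambda(x+Z);\tau>1]=U_\Lambda(x)-\int_0^1\bbE[\Lambda(x+Z(s));\tau>s]\,\ud s$ (which you derive from the potential representation and the Markov property, and the paper derives by the Chapman--Kolmogorov computation with the killed heat kernel), the lower bound $c\Lambda(x)$ obtained by confining the process to the ball of radius $\delta(x)/2$ where $\Lambda\asymp\Lambda(x)$ and using that this event has probability bounded below since $\delta(x)\ge1$, and the exit-before-time-one term controlled by Lemma~\ref{lem:U_Lambda-o-small-of-M} together with the $J$-integral estimate from Lemma~\ref{lem:error-estimate}. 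Your extra splitting on $\{\delta(y+Z_{1-s})<1\}$ is a point the paper silently absorbs into the bound $U_\Lambda\le CM$, so apart from minor bookkeeping (and the harmless misattribution to ``self-similarity'' of what is just monotonicity in the radius) the two arguments coincide.
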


\begin{proof}
Recall  $\tau_x^Z$ defined  in \eqref{Zand tauZ}.
We start by splitting the integral
\begin{align}\label{eq:U-lambda-exp-spli}
\bbE [U_\Lambda (x+Z)]
=
\bbE [U_\Lambda (x+Z); \tau_x^Z \le1] +
\bbE [U_\Lambda (x+Z); \tau_x^Z > 1].
\end{align}
By Lemma \ref{lem:U_Lambda-o-small-of-M} we know that $U_\Lambda (x)\le c M(x)$. This followed by the estimate of the $J$- integral from Lemma \ref{lem:error-estimate} yields
\begin{align}\label{eq:U-lambda-tau-1}
\bbE [U_\Lambda (x+Z); \tau_x^Z \le1]
\le C
\frac{M(x)}{\delta(x)^{\alpha +\varepsilon}}.
\end{align}
Recall that $G(x,y) = G_K (x,y)$. For brevity we use the similar notation for the killed heat kernel $p_K(t,x,y)= p(t,x,y)$ which was defined in \eqref{eq:killed-heat-kernel}.
For the second integral we write
\begin{align*}
\bbE [U_\Lambda (x+Z); \tau_x^Z > 1]
&=
\int_{K} p(1,x,u) U_\Lambda(u)\ud u
=
\int_{K} p(1,x,u) \int_K G(u,y)\Lambda(y)\, \ud y\, \ud u\\
&=
\int_K \Lambda (y) \int_K p(1,x,u)\,  G(u,y)\, \ud u\, \ud y\\
&=
\int_K \Lambda (y) \int_K p(1,x,u)\int_{0}^\infty p(t,u,y)\, \ud t\, \ud u\, \ud y
\\
&=
\int_K \Lambda (y)  \int_{1}^\infty p(t,x,y)\, \ud t\,  \ud y\\
&=
\int_K \Lambda(y) \left( G(x,y) - \int_0^1 p(t,x,y)\, \ud t\right) \ud y\\
&=
U_\Lambda (x)
-
\int_K \left( \int_0^1p(t,x,y)\, \ud t\right) \Lambda (y)\, \ud y.
\end{align*}
Hence, by \eqref{eq:U-lambda-exp-spli} and \eqref{eq:U-lambda-tau-1},
\begin{align}\label{eq:U-lambda-help01}
\bbE [U_\Lambda (x+Z)]
-
U_\Lambda (x)
\leq
-
\int_K \left( \int_0^1p(t,x,y)\, \ud t\right) \Lambda (y)\, \ud y
+
C\frac{M(x)}{\delta(x)^{\alpha + \varepsilon}}
\end{align}
and we are left to show that
\begin{align}\label{eq:U-lambda-help11}
\int_K \left( \int_0^1p(t,x,y)\, \ud t\right) \Lambda (y)\, \ud y
\ge C_1 \Lambda(x).
\end{align}
To do this end, we notice that, by \eqref{eq:M-Michalik-up} and \eqref{eq:M-Michalik-low},
\begin{align*}
\inf_{|y-x|\le\frac{\delta(x)}{2}}\Lambda(y)
\ge C \frac{|x|^{\beta-\alpha/2}\delta(x)^{\alpha/2}}{(1+\delta(x))^{\alpha +\varepsilon/2}}
\ge C_1
\Lambda (x)
\end{align*}
and whence, for all $x\in K $ with $\delta (x)\ge1$,
\begin{align*}
\int_K \left( \int_0^1p(t,x,y)\, \ud t\right) \Lambda (y)\, \ud y
&\geq
C\Lambda (x)
\int_0^1 \bbP \left( \sup_{s\leq t}|Z_s-x|\leq \frac{\delta(x)}{2}\right) \ud t\\
&\geq
C\Lambda (x)
\int_0^1 \bbP \left( \sup_{s\leq t}|Z_s-x|\leq \frac{1}{2}\right) \ud t\geq C_1\Lambda (x).
\end{align*}
Finally, combining \eqref{eq:U-lambda-help01} with \eqref{eq:U-lambda-help11} yield the desired estimate in \eqref{eq:error-U-lambda}.
\end{proof}

\begin{lemma}\label{lem:U-Lambda-Exp-diff-alpha<1}
For $0<\alpha <1$,
\begin{align}
\bbE [U_\Lambda (x+X) - U_\Lambda (x+Z)] = o(\Lambda (x)), \quad \text{as $\delta(x)\to \infty$.}
\end{align}
\end{lemma}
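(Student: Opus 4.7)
The plan is to follow the treatment of the term $I$ in Lemma~\ref{lem:error-estimate}, with $U_\Lambda$ in place of $M$ and Lemma~\ref{lem:U-Lambda-diff-est} playing the role of the gradient estimate \eqref{eq:M-gradient-0}. Since $p_X$ and $p_Z$ are probability densities, $\int(p_X-p_Z)\,\ud y=0$ and the quantity under study becomes
\begin{equation*}
\bbE[U_\Lambda(x+X)-U_\Lambda(x+Z)]=\int_{\bbR^d}\bigl(U_\Lambda(x+y)-U_\Lambda(x)\bigr)\bigl(p_X(y)-p_Z(y)\bigr)\,\ud y.
\end{equation*}
I would then split the integration region at $|y|=\delta(x)^{1-\eta}$ with a small $\eta\in(0,1)$ chosen at the end.

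In the outer region $|y|>\delta(x)^{1-\eta}$, the bound $|U_\Lambda(x+y)-U_\Lambda(x)|\le U_\Lambda(x+y)+U_\Lambda(x)$ together with Lemma~\ref{lem:U_Lambda-o-small-of-M} and the polynomial estimate \eqref{eq:M-beta-bound} controls both summands, while $|p_X-p_Z|(y)\le C(1+|y|)^{-d-\alpha-\nu}$ provides the required tail decay. Since $U_\Lambda(x)\asymp\Lambda(x)\delta(x)^{\alpha}$, the tail measure of $|p_X-p_Z|$ over this region carries an extra factor $\delta(x)^{-(1-\eta)\nu}$, and for $\eta$ small enough compared to $\nu$ this piece is $o(\Lambda(x))$.

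The inner region $|y|\le\delta(x)^{1-\eta}$ is delicate. Lemma~\ref{lem:U-Lambda-diff-est} gives $|U_\Lambda(x+y)-U_\Lambda(x)|\le C\Lambda(x)|y|^\alpha$, and the resulting naive bound is only $O(\Lambda(x))$ because $\int|y|^\alpha(1+|y|)^{-d-\alpha-\nu}\,\ud y$ is a finite positive constant. To promote it to $o(\Lambda(x))$ I would decompose the potential as $U_\Lambda=U_{\Lambda^{\rm near}}+U_{\Lambda^{\rm far}}$, with $\Lambda^{\rm near}(z)=\Lambda(z)\mathbf{1}\{|z-x|\le\delta(x)^{-\gamma}\}$ and $\Lambda^{\rm far}=\Lambda-\Lambda^{\rm near}$ for an auxiliary small $\gamma>0$. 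The truncated potential satisfies $U_{\Lambda^{\rm near}}(w)\le C\Lambda(x)\delta(x)^{-\gamma\alpha}$ uniformly in $w$, so its increment integrated against $|p_X-p_Z|$ is $o(\Lambda(x))$. For the far part one revisits the analysis of regions $D_4$ and $D_5$ in the proof of Lemma~\ref{lem:U-Lambda-diff-est}, where the Green function is smooth at $x$ and yields the sharper Lipschitz-type bound $|U_{\Lambda^{\rm far}}(x+y)-U_{\Lambda^{\rm far}}(x)|\le CM(x)|y|/\delta(x)^{1+\varepsilon/2}$; integrating $|y|$ against $(1+|y|)^{-d-\alpha-\nu}$ up to $|y|\le\delta(x)^{1-\eta}$ and exploiting the choice $\alpha+\nu<1$ produces a contribution of order $M(x)\delta(x)^{(1-\eta)(1-\alpha-\nu)-1-\varepsilon/2}$, which is indeed $o(\Lambda(x))$.

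The principal difficulty is precisely this inner estimate: the H\"older regularity of $U_\Lambda$ furnished by Lemma~\ref{lem:U-Lambda-diff-est} matches the decay rate of $|p_X-p_Z|$ at the borderline scale $|y|\sim 1$, so the H\"older bound alone cannot beat $O(\Lambda(x))$; the two-scale decomposition of the potential, separating the local and non-local contributions of $\Lambda$ with respect to the base point $x$, is what produces the additional power of $\delta(x)^{-1}$ needed. Collecting the inner and outer pieces, and noting that $M(x)/\delta(x)^{\alpha+\varepsilon}=o(\Lambda(x))$ absorbs any remaining lower-order terms, yields the claim.
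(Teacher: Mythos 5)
The decisive step of your inner-region argument is the claimed Lipschitz bound $|U_{\Lambda^{\rm far}}(x+y)-U_{\Lambda^{\rm far}}(x)|\le CM(x)|y|/\delta(x)^{1+\varepsilon/2}$, and this is where the proof breaks down. The estimates \eqref{eq:est-in-D_4-for-Lambda-error} and \eqref{eq:est-in-D_5-for-Lambda-error} that you invoke concern only the regions $D_4\cup D_5$, i.e.\ $|z-x|\gtrsim\delta(x)$; your "far'' part, cut off at the shrinking radius $\delta(x)^{-\gamma}$, still contains all $z$ with $|z-x|$ of order $|y|\sim 1$, which is exactly the scale at which $|p_X-p_Z|$ carries mass of order one. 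For such $z$ one has $|G(x+y,z)-G(x,z)|\asymp |x-z|^{\alpha-d}$ (no cancellation is available once you take absolute values), and integrating over $\{\delta(x)^{-\gamma}\le |x-z|\le C|y|\}$ against $\Lambda(z)\asymp\Lambda(x)$ already produces a contribution of order $\Lambda(x)|y|^{\alpha}$; the intermediate shell $3|y|\le|x-z|\le\tfrac34\delta(x)$ gives, via \eqref{eq:Green-killed-gradient-est}, another $\Lambda(x)|y|\int_{3|y|}^{\delta}r^{\alpha-2}\,\ud r\asymp\Lambda(x)|y|^{\alpha}$ since $\alpha<1$. So the far part obeys only the same H\"older-$\alpha$ bound as $U_\Lambda$ itself, not a Lipschitz bound with a constant decaying in $\delta(x)$, and at $|y|\asymp 1$ your inner integral remains $O(\Lambda(x))$. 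No absolute-value estimate can beat this barrier: the gain must come from a cancellation. The paper obtains it by treating small $|y|$ on a ball of radius $\varepsilon(x)\delta(x)\to\infty$ around $x$ (the set $K_1$), replacing $G$ there by the free Riesz kernel $A_{d,\alpha}|u-z|^{\alpha-d}$, using that the two free potentials of the (locally constant) density $\Lambda(x)$ over the symmetric set $K_1$ cancel exactly by translation invariance, and controlling the defect $\Delta(u,z)=A_{d,\alpha}|u-z|^{\alpha-d}-G(u,z)$ by heat-kernel bounds; your scheme contains no analogue of this mechanism (you never use $\int(p_X-p_Z)\,\ud y=0$ beyond the initial identity), so the key $o(\Lambda(x))$ cannot be reached.

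There is also a secondary, uniformity problem in your outer region $|y|>\delta(x)^{1-\eta}$: bounding $U_\Lambda(x+y)$ by $C|x+y|^{\beta}$ via \eqref{eq:M-beta-bound} gives a term of order $|x|^{\beta}\delta(x)^{-(1-\eta)(\alpha+\nu)}$, while $\Lambda(x)\asymp|x|^{\beta-\alpha/2}\delta(x)^{-\alpha/2-\varepsilon/2}$; the ratio contains the factor $(|x|/\delta(x))^{\alpha/2}$, which is unbounded as $\delta(x)\to\infty$ (in a cone $|x|$ may be arbitrarily large compared with $\delta(x)$), so this piece is not $o(\Lambda(x))$ uniformly, as is needed in Lemma \ref{supermartingale1}. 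This part is repairable (e.g.\ use $M(x+y)\le C|x|^{\beta-\alpha/2}(\delta(x)+|y|)^{\alpha/2}$ from \eqref{eq:M-Michalik-up}, or follow the paper, which avoids any pointwise bound on $U_\Lambda(x+y)$ by rewriting $\int_{|y|>\delta/2}U_\Lambda(x+y)p_Z(y)\,\ud y$ through $\bbE[U_\Lambda(x+Z)]-U_\Lambda(x)$ and the drift estimate of Lemma \ref{lem:U-Lambda-Exp-U-est}), but as written it is a gap; also note that only the upper bound $U_\Lambda(x)\le C\Lambda(x)\delta(x)^{\alpha}$ is available, not the two-sided relation you assert.
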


\begin{proof}
For any fixed $R>0$ we write
\begin{align*}
\bbE &[U_\Lambda (x+X) - U_\Lambda (x+Z)]
=
\int_{\bbR^d} \left( U_\Lambda (x+y) - U_\Lambda(x)\right) \left( p_X(y) - p_Z(y)\right) \ud y\\
&=
\left( \int_{|y|\le R} + \int_{R<|y|\le\frac{\delta(x)}{2}} + \int_{|y|>\frac{\delta(x)}{2}}\right)
\left( U_\Lambda (x+y) - U_\Lambda(x)\right) \left( p_X(y) - p_Z(y)\right) \ud y\\
&=I_1(x) + I_2(x) + I_3(x).
\end{align*}
We first estimate $I_2(x)$.  By Lemma \ref{lem:U-Lambda-diff-est} and our assumption \eqref{eq:Ass-densities-diff} we immediately obtain
\begin{align*}
|I_2(x)|
&\leq
C\Lambda (x)
\int_{R<|y|\le\frac{\delta(x)}{2}}
|y|^\alpha
|p_X(y) - p_Z(y)|\ud y
\le C_1
\Lambda (x)
\int_{R<|y|\le\frac{\delta(x)}{2}}
|y|^\alpha
|y|^{-d-\alpha -\nu} \ud y\\
&\leq
C_2\Lambda (x)
\int_{R}^\infty \frac{\ud r}{r^{1+\nu}}
\leq
C_3
\frac{\Lambda(x)}{R^{\nu}}
\end{align*}
and the last expression is equal to $o(\Lambda (x))$ if we choose $R=R(x)\to\infty$. \\
To estimate $I_3(x)$ we use \eqref{eq:Ass-densities-diff} and \eqref{eq:stable-denisty-bound} which yield
\begin{align*}
|I_3(x)|
&\leq
U_\Lambda (x) \int_{|y|>\frac{\delta(x)}{2}}
|p_X(y) - p_Z(y)|\ud y
+
\int_{|y|>\frac{\delta(x)}{2}}
U_\Lambda (x+y)
|p_X(y) - p_Z(y)|\ud y\\
&\leq
C \frac{U_\Lambda(x)}{\delta(x)^{\alpha + \nu}}
+
C_1\frac{1}{\delta(x)^{\nu}}
\int_{|y|>\frac{\delta(x)}{2}}
U_\Lambda (x+y)
p_Z(y)\ud y.
\end{align*}
We next handle the last integral from the estimate above. By Lemma \ref{lem:U_Lambda-o-small-of-M} and Lemma \ref{lem:U-Lambda-Exp-U-est},
\begin{align*}
\int_{|y|>\frac{\delta(x)}{2}}
U_\Lambda (x+y)
p_Z(y)\ud y
&=
\bbE[U_\Lambda(x+Z)]
-
\int_{|y|\le\frac{\delta(x)}{2}}
U_\Lambda (x+y)
p_Z(y)\ud y\\
&=
\bbE[U_\Lambda(x+Z)]  -U_\Lambda(x)
+
U_\Lambda(x)\bbP (|Z_1|> \delta(x/2))\\
&\qquad \qquad \quad -
\int_{|y|\leq\frac{\delta(x)}{2}}
\left( U_\Lambda (x+y) -U_\Lambda(x)\right)
p_Z(y)\ud y\\
&\leq
-c_1\Lambda (x) +c_2\frac{M(x)}{\delta(x)^{\alpha +\varepsilon}} + C U_\Lambda(x)\frac{1}{\delta(x)^{\alpha}}\\
&\qquad \qquad \quad -
\int_{|y|\leq\frac{\delta(x)}{2}}
\left( U_\Lambda (x+y) -U_\Lambda(x)\right)
p_Z(y)\ud y
\end{align*}
and for the last integral we apply Lemma \ref{lem:U-Lambda-diff-est} to get
\begin{align*}
\int_{|y|\leq\frac{\delta(x)}{2}}
\left( U_\Lambda (x+y) -U_\Lambda(x)\right)
p_Z(y)\ud y
&\le C
\Lambda (x)
\int_{|y|\leq\frac{\delta(x)}{2}}
|y|^\alpha p_Z(y)\ud y
\le C_1
\Lambda(x) \log \delta(x).
\end{align*}
It follows that
\begin{align*}
I_3(x)
\leq
C\frac{\Lambda(x)}{\delta(x)^{\nu}} + C_1\frac{\Lambda (x)\log \delta(x)}{\delta(x)^{\nu}}
\le C_2 \frac{\Lambda(x)}{\delta(x)^{\nu /2}}.
\end{align*}
We finally estimate the integral $I_1(x)$. We split this integral into three parts
\begin{align*}
I_1(x)
&=
\int_{|y|\le R} \left(\int_{K_1}+\int_{K_2}+\int_{K_3}\right) \left( G(x+y,z)-G(x,z)\right) \Lambda (z)
 \left( p_X(y) - p_Z(y)\right) \ud z\, \ud y,
\end{align*}
where
\begin{align*}
K_1 = \mathrm{conv}\left( B(x,\varepsilon(x)\delta(x)) \cup B(x+y,\varepsilon(x)\delta(x))\right)
\end{align*}
and
\begin{align*}
K_2 = \left\{z\in K:\, |x-z|\le\frac{\delta(x)}{2}\right\}\cap K_1^c,\quad
K_3=\left\{z\in K:\, |x-z|>\frac{\delta(x)}{2}\right\}\cap K_1^c,
\end{align*}
for some function $\varepsilon(x)$ such that $\varepsilon(x)\delta(x)\to \infty$. The integral over the region $K_2$ can be handled similarly as the integral over the set $D_3$ in the proof of Lemma \ref{lem:U-Lambda-diff-est} (cf.\ \eqref{eq:int-over-D_3}) and this results in the following bound
\begin{align*}
\int_{K_2}
\left| G(x+y,z)-G(x,z)\right| \Lambda (z) \, \ud z
&\leq
C\Lambda(x)|y| \int_{|x-z|>\varepsilon(x)\delta(x)}\frac{\ud z}{|x-z|^{d-\alpha +1}}\\
&\le C_1|y| \frac{\Lambda(x)}{\left( \varepsilon(x)\delta(x)\right)^{1-\alpha}}.
\end{align*}
To estimate the integral over $K_3$ we apply estimates \eqref{eq:est-in-D_4-for-Lambda-error} and \eqref{eq:est-in-D_5-for-Lambda-error} over sets $D_4$ and $D_5$ from Lemma \ref{lem:U-Lambda-diff-est} which yield
\begin{align}\label{eq:K_3-int-alpha<1}
\begin{split}
\int_{K_3} \left| G(x+y,z)-G(x,z)\right| \Lambda (z) \, \ud z
&\leq
\int_{D_4\cup D_5}\left| G(x+y,z)-G(x,z)\right| \Lambda (z) \, \ud z\\
&\le C \frac{M(x)}{|x|^{1+\varepsilon/2}}|y|
\le C \frac{\Lambda(x)}{\delta(x)^{1-\alpha}}|y|.
\end{split}
\end{align}
We next set
$$
\Delta(u,z)=\frac{A_{d,\alpha}}{|u-z|^{d-\alpha}}-G(u,z).
$$
To estimate the integral over $K_1$ we first notice that $\delta(z)\sim\delta(x)$ uniformly in $z\in K_1$. Furthermore, applying \eqref{eq:M-gradient},
one infers that $M(z)\sim M(x)$ uniformly in $z\in K_1$. As a result, $\Lambda(z)\sim\Lambda(x)$
uniformly in $z\in K_1$. Consequently,
\begin{align}
\label{eq:new-attempt}
\nonumber
&\int_{K_1}
\left( G(x+y,z)-G(x,z)\right) \Lambda (z) \, \ud z\\
\nonumber
&\hspace{1cm} =\Lambda(x)\left[
\int_{K_1}
\left( \frac{A_{d,\alpha}}{|x+y-z|^{d-\alpha}} -
\frac{A_{d,\alpha}}{|x-z|^{d-\alpha}} \right)\ud z
-
\int_{K_1}
\left( \Delta(x+y,z)-\Delta(x,z)\right)\ud z \right]\\
&\hspace{2cm}+o\left(\Lambda(x)\int_{K_1}\frac{1}{|x-z|^{d-\alpha}}\ud z\right).
\end{align}
Observe that the first integral in this estimate disappear in view of symmetry of the set $K_1$ and translation invariance of the full Green function.
To estimate the second integral we fix $\varepsilon_0(x)\ge\varepsilon (x)$ tending to zero.
With $T=(\varepsilon_0(x) \delta(x))^\alpha $ and the full heat kernel $p_Z(t,u,z)$ we have
\[\Delta(u,z)=\int_0^T(p_Z(t,u,z)-p^K_Z(t,u,z))\, \ud t +  \int_T^\infty(p_Z(t,u,z)-p^K_Z(t,u,z))\, \ud t.
\]
This implies that
\begin{align*}
\left\vert
\Delta(u,z)-\int_0^T(p_Z(t,u,z)-p^K_Z(t,u,z))\ud t
\right\vert
&\leq
\int_T^\infty p(t,u,z)\ud t\\
&= \int_T^\infty t^{-d/\alpha} p_Z\left(1,0, \frac{z-u}{t^{1/\alpha}}\right)\ud t
\\&\le C T^{1-\frac{d}{\alpha}}=C (\varepsilon_0(x) \delta(x))^{\alpha-d}
\end{align*}
and consequently,
\begin{align*}
\int_{K_1}
\left( \Delta(x+y,z)-\Delta(x,z)\right)\ud z
\leq
C (\varepsilon_0(x) \delta(x))^{\alpha-d}
(\varepsilon(x)\delta (x))^d.
\end{align*}
The last expression may be done arbitrarily small by a proper choice of $\varepsilon_0(x)$.
Since $K_1$ is a subset of the ball centred at $x$ and with radius $(R+\varepsilon(x)\delta(x)$,
$$
\int_{K_1}\frac{1}{|x-z|^{d-\alpha}}\ud z\le C(R+\varepsilon(x)\delta(x))^\alpha.
$$
This implies that if $R(x)\to\infty$ and
$\varepsilon(x)\delta(x)\to\infty$ sufficiently slow, then the last term on the right hand side of \eqref{eq:new-attempt} is $o(\Lambda(x))$.

To finish the proof it remains to notice that integrating over $\{y:|y|\le R(x)\}$ does not destroy convergence to zero provided that $R(x)\to\infty$ sufficiently slow.
\end{proof}

\begin{lemma}\label{alphabiiger1}
Under above assumptions, for $1<\alpha <2$, it holds 
\begin{align}
\bbE [U_\Lambda (x+X) - U_\Lambda (x+Z)] = o(\Lambda (x)), \quad \delta(x)\to \infty.
\end{align}
\end{lemma}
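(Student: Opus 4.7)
The plan is to follow the same scheme as in the proof of Lemma~\ref{lem:U-Lambda-Exp-diff-alpha<1}, replacing first-order Taylor expansions by second-order ones and exploiting systematically the identities $\int(p_X-p_Z)\,\ud y=0$ and $\int y\,(p_X(y)-p_Z(y))\,\ud y=0$, which follow from $\bbE X=\bbE Z=0$ and are legitimate for $\alpha>1$. These two identities let us rewrite the quantity of interest as
\[
\bbE[U_\Lambda(x+X)-U_\Lambda(x+Z)]
=\int_{\bbR^d}\bigl(U_\Lambda(x+y)-U_\Lambda(x)-\nabla U_\Lambda(x)\cdot y\bigr)\bigl(p_X(y)-p_Z(y)\bigr)\ud y,
\]
and then split the $y$-integration into $I_1=\{|y|\le R\}$, $I_2=\{R<|y|\le \delta(x)/2\}$ and $I_3=\{|y|>\delta(x)/2\}$ for a suitably chosen $R=R(x)\to\infty$.

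For $I_2$ the second-order estimate from Lemma~\ref{lem:U_Lambda-diff-alpha>1} applies, giving $|I_2|\le C\Lambda(x)\int_R^{\delta(x)/2}r^{\alpha-1-\alpha-\nu}\ud r\le C\Lambda(x)R^{-\nu}=o(\Lambda(x))$. For $I_3$, the three terms in the integrand are treated separately: the term $U_\Lambda(x)\int_{|y|>\delta(x)/2}(p_X-p_Z)\ud y$ is controlled by \eqref{eq:Ass-densities-diff} together with Lemma~\ref{lem:U_Lambda-o-small-of-M}; the linear term $\nabla U_\Lambda(x)\cdot\int_{|y|>\delta(x)/2}y\,(p_X-p_Z)\ud y$ is bounded via Lemma~\ref{gradientestimate} and the tail bound $\int_{|y|>\delta(x)/2}|y|\,(1+|y|)^{-d-\alpha-\nu}\ud y\le C\,\delta(x)^{1-\alpha-\nu}$, valid after choosing $\nu>0$ so that $\alpha+\nu>1$; finally, for $\int_{|y|>\delta(x)/2}U_\Lambda(x+y)(p_X-p_Z)\ud y$ one uses $|p_X-p_Z|\le C\,\delta(x)^{-\nu}p_Z(y)$ on this region and then, exactly as in the proof of Lemma~\ref{lem:U-Lambda-Exp-diff-alpha<1}, writes
\[
\int_{|y|>\delta(x)/2}U_\Lambda(x+y)p_Z(y)\ud y=\bbE[U_\Lambda(x+Z)]-\int_{|y|\le\delta(x)/2}U_\Lambda(x+y)p_Z(y)\ud y,
\]
bounding the right-hand side via Lemmas~\ref{lem:U-Lambda-Exp-U-est} and~\ref{lem:U_Lambda-diff-alpha>1}; here the vanishing of $\int_{|y|\le \delta(x)/2}y\,p_Z(y)\ud y$ by rotational symmetry of $p_Z$ is useful for absorbing the new gradient correction. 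Collecting the contributions, $|I_3|=O(\Lambda(x)\log\delta(x)/\delta(x)^{\nu})=o(\Lambda(x))$.

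For $I_1$ we rewrite the integrand using the definition of $U_\Lambda$, exchange the order of integration and split the $z$-integration into the same three regions $K_1,K_2,K_3$ as in the proof of Lemma~\ref{lem:U-Lambda-Exp-diff-alpha<1}, with $K_1=\mathrm{conv}(B(x,\varepsilon(x)\delta(x))\cup B(x+y,\varepsilon(x)\delta(x)))$ for a slowly decreasing $\varepsilon(x)$ with $\varepsilon(x)\delta(x)\to\infty$. On $K_2$ and $K_3$, $|y|\le R\ll\varepsilon(x)\delta(x)$ guarantees $\delta(x+sy)\sim\delta(x)$ and $|x+sy-z|\sim|x-z|$ uniformly in $s\in[0,1]$, so the second-order Taylor remainder of $G$ in the $y$-variable can be bounded by $C|y|^2|D^2_{xx}G(x+sy,z)|$ with the estimate~\eqref{eq:Green-killed-second-deriv-est}. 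Together with $\Lambda(z)\sim\Lambda(x)$ in $K_2$ and the pointwise bounds of Lemma~\ref{lem:Green-bounds-our}, this produces contributions of order $\Lambda(x)(\varepsilon(x)\delta(x))^{\alpha-2}$ from $K_2$ and $M(x)/\delta(x)^{2+\varepsilon/2}$ from $K_3$, both $o(\Lambda(x))$ once $\varepsilon(x)\delta(x)\to\infty$.

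The real obstacle is the innermost region $K_1$, because the singularity at $z=x$ forbids a naive second-order Taylor bound on the translation-invariant part of $G$. As in Lemma~\ref{lem:U-Lambda-Exp-diff-alpha<1}, after replacing $\Lambda(z)$ by $\Lambda(x)$ (with error $o(\Lambda(x))$ absorbable via \eqref{eq:M-gradient}) we write $G=A_{d,\alpha}|\cdot|^{-(d-\alpha)}-\Delta$. The symmetry of $K_1$ under the reflection $z\mapsto 2x+y-z$ kills the pair $A_{d,\alpha}/|x+y-z|^{d-\alpha}-A_{d,\alpha}/|x-z|^{d-\alpha}$; the new feature compared to the $\alpha<1$ case is that one must also neutralise the linear term $\nabla_x(A_{d,\alpha}/|x-z|^{d-\alpha})\cdot y$. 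This is achieved by integrating in $y$ first, using $\int y\,(p_X-p_Z)\ud y=0$ and the tail estimate $\int_{|y|>R}y\,(p_X-p_Z)\ud y=O(R^{1-\alpha-\nu})$, which for $R=R(x)\to\infty$ suitably coupled with $\varepsilon(x)\delta(x)\to\infty$ gives an $o(\Lambda(x))$ error. For the $\Delta$-part we use its heat-kernel representation $\Delta(u,z)=\int_0^T+\int_T^\infty(p_Z-p_Z^K)(t,u,z)\ud t$ with $T=(\varepsilon_0(x)\delta(x))^\alpha$ and $\varepsilon_0(x)\ge\varepsilon(x)$ tending to zero, bounding the tail by $T^{1-d/\alpha}$ and applying second-derivative-in-$y$ estimates to the bounded-time part. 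Stitching the $K_1,K_2,K_3$ contributions yields $|I_1|=o(\Lambda(x))$, which combined with the estimates for $I_2$ and $I_3$ concludes the proof.
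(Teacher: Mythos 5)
Your plan is correct and follows essentially the same route as the paper's proof: the same zero-mean subtraction of $\nabla U_\Lambda(x)\cdot y$, the same split into $|y|\le R$, $R<|y|\le\delta(x)/2$, $|y|>\delta(x)/2$ with Lemma \ref{lem:U_Lambda-diff-alpha>1} for the middle range and Lemmas \ref{lem:U_Lambda-o-small-of-M}, \ref{gradientestimate}, \ref{lem:U-Lambda-Exp-U-est} for the tail, and the same $K_1,K_2,K_3$ decomposition for $|y|\le R$ with the second-derivative Green bound \eqref{eq:Green-killed-second-deriv-est}, the symmetry of $K_1$ for the Riesz kernel, the heat-kernel representation of $\Delta$, and the choice $R\asymp\varepsilon(x)\delta(x)$. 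The only (harmless) cosmetic deviations are that you kill $\int_{|y|\le\delta(x)/2}y\,p_Z(y)\,\ud y$ by rotational symmetry where the paper uses $\bbE Z=0$ plus a tail bound, and that you split off the Riesz gradient on $K_1$ explicitly, whereas the paper bounds $\int_{K_1}|\nabla G(x,z)|\Lambda(z)\,\ud z$ as a whole before invoking the same cancellation $\int_{|y|\le R}y\,(p_X-p_Z)\,\ud y=O(R^{1-\alpha-\nu})$.
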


\begin{proof}
For any fixed $R>0$ we write
\begin{align*}
\bbE &[U_\Lambda (x+X) - U_\Lambda (x+Z)]\\
&=
\int_{\bbR^d} \left( U_\Lambda (x+y) - U_\Lambda(x)
-y \nabla U_\Lambda(x)
\right) \left( p_X(y) - p_Z(y)\right) \ud y\\
&\hspace*{1cm}
+\nabla U_\Lambda (x) \int_{\bbR^d} y \left( p_X(y) - p_Z(y)\right) \ud y\\
&=
\left( \int_{|y|\le R} \!\!+ \int_{R<|y|\le\frac{\delta(x)}{2}} \!\!+ \int_{|y|>\frac{\delta(x)}{2}}\right)
\left( U_\Lambda (x+y) - U_\Lambda(x)
 -y \nabla U_\Lambda(x)
\right) \left( p_X(y) - p_Z(y)\right) \ud y\\
&=I_1(x) + I_2(x) + I_3(x),
\end{align*}
where we used the fact that $\bbE [X] =\bbE[Z]=0$.
We first estimate $I_2(x)$.  By Lemma \ref{lem:U_Lambda-diff-alpha>1}  and our assumption \eqref{eq:Ass-densities-diff} we  obtain
\begin{align*}
|I_2(x)|
&\leq
C\Lambda (x)
\int_{R<|y|\le\frac{\delta(x)}{2}}
|y|^\alpha
|p_X(y) - p_Z(y)|\ud y
\le C_1
\Lambda (x)
\int_{R<|y|\le\frac{\delta(x)}{2}}
|y|^\alpha
|y|^{-d-\alpha -\nu} \ud y\\
&\leq
C_2\Lambda (x)
\int_{R}^\infty \frac{\ud r}{r^{1+\nu}}
\leq
C_3
\frac{\Lambda(x)}{R^{\nu}}
\end{align*}
and the last expression is equal to $o(\Lambda (x))$ if we take $R=R(x)\to\infty$ as
$\delta(x)\to\infty$ appropriately large.

To estimate $I_3(x)$ we use \eqref{eq:Ass-densities-diff} and \eqref{eq:stable-denisty-bound} which yield
\begin{align*}
|I_3(x)|
&\leq
U_\Lambda (x) \int_{|y|>\frac{\delta(x)}{2}}
|p_X(y) - p_Z(y)|\ud y
+
|\nabla U_\Lambda (x)|
\int_{|y|>\frac{\delta(x)}{2}}
|y| |p_X(y) - p_Z(y)|\ud y\\
&\hspace*{1cm} +
\int_{|y|>\frac{\delta(x)}{2}}
U_\Lambda (x+y)
|p_X(y) - p_Z(y)|\ud y\\
&\leq
C \frac{U_\Lambda(x)}{\delta(x)^{\alpha + \nu}}
+
C_1 \frac{|\nabla U_\Lambda(x)|}{\delta(x)^{\alpha + \nu -1}}
+
C_2\frac{1}{\delta(x)^{\nu}}
\int_{|y|>\frac{\delta(x)}{2}}
U_\Lambda (x+y)
p_Z(y)\ud y.
\end{align*}
We next handle the last integral from the estimate above.
We have
\begin{align}\label{eq:U-Lambda-alpha>1-I_2-help}
\begin{split}
\int_{|y|>\frac{\delta(x)}{2}}&
U_\Lambda (x+y)
p_Z(y)\ud y
=
\bbE[U_\Lambda(x+Z)]
-
\int_{|y|\le\frac{\delta(x)}{2}}
U_\Lambda (x+y)
p_Z(y)\ud y\\
&=
\bbE[U_\Lambda(x+Z)]  -U_\Lambda(x)
+
U_\Lambda(x)\bbP (|Z_1|> \delta(x/2))\\
&\hspace*{1cm} \quad -
\int_{|y|\leq\frac{\delta(x)}{2}}
\left( U_\Lambda (x+y) -U_\Lambda(x) - y\nabla U_\Lambda (x)\right)
p_Z(y)\ud y\\
& \hspace*{2cm} -
\nabla U_\Lambda (x)
\int_{|y|\leq\frac{\delta(x)}{2}}
y\, p_Z(y)\, \ud y.
\end{split}
\end{align}
Since $\bbE [Z]=0$, we have
\begin{align*}
\left \vert \int_{|y|\leq\frac{\delta(x)}{2}}
y\, p_Z(y)\, \ud y \right \vert
=
\left \vert \int_{|y|\ge\frac{\delta(x)}{2}}
y\, p_Z(y)\, \ud y \right \vert \le C \frac{1}{\delta (x)^{\alpha -1}}.
\end{align*}
To estimate the penultimate integral in \eqref{eq:U-Lambda-alpha>1-I_2-help}
 we apply Lemma \ref{lem:U_Lambda-diff-alpha>1} which yield
\begin{multline*}
\int_{|y|\leq\frac{\delta(x)}{2}}
\left( U_\Lambda (x+y) -U_\Lambda(x) -y\nabla U_\Lambda (x)\right)
p_Z(y)\ud y\\
\le C
\Lambda (x)
\int_{|y|\leq\frac{\delta(x)}{2}}
|y|^\alpha p_Z(y)\ud y
\le C_1
\Lambda(x) \log x.
\end{multline*}
By Lemma \ref{lem:U_Lambda-o-small-of-M} and Lemma \ref{lem:U-Lambda-Exp-U-est},
it follows from \eqref{eq:U-Lambda-alpha>1-I_2-help} that
\begin{align*}
|I_3(x)|
\leq
C\frac{\Lambda(x)}{\delta(x)^{\nu /2}}+
C_1\frac{\Lambda(x)}{\delta(x)^{\nu}} + C_2\frac{\Lambda (x)\log x}{\delta(x)^{\nu}}
\le C_3 \frac{\Lambda(x)}{\delta(x)^{\nu /2}}.
\end{align*}
We finally estimate the integral $I_1(x)$. We consider the three following integrals
\begin{align*}
\left(\int_{K_1}+\!\! \int_{K_2}+ \!\!\int_{K_3}\right) \left( G(x+y,z)-G(x,z) - y\cdot \nabla_x G(x,z)\right) \Lambda (z),
\end{align*}
where
\begin{align*}
K_1 = \mathrm{conv}\left( B(x,\varepsilon(x)\delta(x)) \cup B(x+y,\varepsilon(x)\delta(x))\right)
\end{align*}
and
\begin{align*}
K_2 = \left\{z\in K:\, |x-z|\le\frac{\delta(x)}{2}\right\}\cap K_1^c,\quad
K_3=\left\{z\in K:\, |x-z|>\frac{\delta(x)}{2}\right\},
\end{align*}
for some function $\varepsilon(x)\to 0$ such that $\varepsilon(x)\delta(x)\to \infty$. For the integral over $K_2$ we proceed similarly as in the proof of Lemma \ref{lem:U-Lambda-Exp-diff-alpha<1}. By \eqref{eq:Green-killed-second-deriv-est}, we have
\begin{multline*}
\int_{K_2} \left| G(x+y,z)-G(x,z) - y\nabla_x G(x,z)\right| \Lambda (z)\, \ud z \\
\le C
\Lambda (x) |y|^2 \int_{|x-z|\ge\varepsilon(x)\delta(x)}\frac{\ud z}{|x-z|^{d-\alpha +2}}\\
\le C_1
\Lambda (x) |y|^2 \left( \varepsilon(x)\delta(x)\right)^{\alpha -2}.
\end{multline*}
To use this inequality to estimate the integral $I_1(x)$, we need to integrate the obtained upper bound over all $y$ in the ball of radius $R$. We have
\begin{align*}
\int_{|y|\le R} |y|^2 \left( p_X(y) - p_Z(y)\right)\, \ud y\le C R^{2-\alpha -\nu},
\end{align*}
and thus we can choose $R=R(x) = c\varepsilon(x)\delta (x)$, for a constant $c>0$, so that this part of the integral $I_1(x)$ is equal to $o(\Lambda (x))$.
For the integral over $K_3$ we use similar bound as in \eqref{eq:K_3-int-alpha<1}, but this time we base upon Lemma \ref{lem:U_Lambda-diff-alpha>1} and this implies
\begin{multline*}
\int_{K_3} \left| G(x+y,z)-G(x,z) - y\cdot \nabla_x G(x,z)\right| \Lambda (z)\, \ud z \\
\leq
C \frac{M(x)}{\delta (x)^{2+\nu /2}}|y|^2\le C_1
\Lambda (x) |y|^2\delta (x)^{\alpha -2}
\end{multline*}
and we conclude that in this case the same choice of the function $R(x)$ will force the right rate of decay.

In the last integral over $K_1$ the part involving the difference $G(x+y,z) - G(x,z)$ can be handled in the same way as in the proof of Lemma \ref{lem:U-Lambda-Exp-diff-alpha<1}. It remains to estimate the part involving the gradient, that is
\begin{align*}
\int_{K_1} |\nabla G(x,z)| \Lambda (z) \, \ud z
&\le C
\Lambda (x) \int_{K_1}\frac{\ud z}{|x-z|^{d-\alpha +1}}
\le C_1
\Lambda (x) \delta (x)^{\alpha -1}.
\end{align*}
If we integrate the last upper bound over $y$'s belonging to the ball of radius $R$ and use the fact that $\bbE [Z] = \bbE [X]=0$ combined with \eqref{eq:Ass-densities-diff}, we obtain
\begin{align*}
\left\vert \int_{|y|\le R}y (p_X(y) - p_Z(y))\ud y \right\vert
&=
\left\vert \int_{|y|\ge R}y (p_X(y) - p_Z(y))\ud y \right\vert
\le C R^{1-\alpha -\nu}
\end{align*}
 and we again conclude that the choice $R(x) = \varepsilon (x) \delta (x)$ will suffice to make all the parts of the integral $I_1(x)$ to be $o(\Lambda (x))$, as desired.
\end{proof}
Let
\begin{equation}\label{fLambda}
    f_U(x)=\bbE [U_\Lambda (x+X)] - U_\Lambda(x)
\end{equation}
and recall that $f$ is the error function defined in \eqref{errorfunction}.
\begin{lemma}\label{supermartingale1}
There exists constant  $c_0>0$ and $R>0$
    such that for
    $x\in K$ with  $\delta(x)>R$ the following bound holds
    \[
        f_U(x)+f(x)
        \le-c_0 \Lambda(x).
    \]
\end{lemma}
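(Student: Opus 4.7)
The plan is simply to combine the three preceding estimates in the right order. Specifically, I would write
\begin{align*}
f_U(x)+f(x)
&=\bigl(\bbE[U_\Lambda(x+X)]-\bbE[U_\Lambda(x+Z)]\bigr)
+\bigl(\bbE[U_\Lambda(x+Z)]-U_\Lambda(x)\bigr)+f(x),
\end{align*}
and then handle the three summands individually. The first summand is $o(\Lambda(x))$ as $\delta(x)\to\infty$ by Lemma~\ref{lem:U-Lambda-Exp-diff-alpha<1} when $0<\alpha<1$ and by Lemma~\ref{alphabiiger1} when $1<\alpha<2$. The second summand is bounded above by $-c_1\Lambda(x)+c_2\frac{M(x)}{\delta(x)^{\alpha+\varepsilon}}$ by Lemma~\ref{lem:U-Lambda-Exp-U-est}. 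The third summand is bounded in absolute value by $C\frac{M(x)}{\delta(x)^{\alpha+\varepsilon}}$ by Lemma~\ref{lem:error-estimate}.

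The key observation that closes the argument is that the ``error terms'' $\frac{M(x)}{\delta(x)^{\alpha+\varepsilon}}$ are of strictly smaller order than the leading term $-c_1\Lambda(x)$. Indeed, since $\delta(x)\le 1+\delta(x)$, one has
\[
\frac{M(x)}{\delta(x)^{\alpha+\varepsilon}}
=\frac{M(x)}{(1+\delta(x))^{\alpha+\varepsilon/2}}\cdot\frac{(1+\delta(x))^{\alpha+\varepsilon/2}}{\delta(x)^{\alpha+\varepsilon}}
\le C\,\Lambda(x)\,\delta(x)^{-\varepsilon/2}
\]
for $\delta(x)\ge 1$, so this term is $o(\Lambda(x))$ as $\delta(x)\to\infty$.

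Putting the three bounds together yields
\[
f_U(x)+f(x)\le -c_1\Lambda(x)+o(\Lambda(x)),\qquad\text{as }\delta(x)\to\infty.
\]
Therefore there exists $R>0$ such that for all $x\in K$ with $\delta(x)>R$ we have
\[
f_U(x)+f(x)\le-\tfrac{c_1}{2}\Lambda(x),
\]
and the claim follows with $c_0=c_1/2$.

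There is no real obstacle here since every ingredient has been prepared in the preceding lemmas; the only thing to be careful about is the bookkeeping between the exponents $\alpha+\varepsilon$ (appearing in the error estimates) and $\alpha+\varepsilon/2$ (appearing in the definition of $\Lambda$), which is precisely what guarantees that the remainder terms are absorbed into the dominant term $-c_1\Lambda(x)$.
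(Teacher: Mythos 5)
Your proposal is correct and matches the paper's own argument: the paper likewise bounds $f(x)$ by $CM(x)/\delta(x)^{\alpha+\varepsilon}\le C\Lambda(x)/\delta(x)^{\varepsilon/2}$ and splits $f_U(x)$ as $\bigl(\bbE[U_\Lambda(x+Z)]-U_\Lambda(x)\bigr)+\bbE[U_\Lambda(x+X)-U_\Lambda(x+Z)]$, invoking Lemmas \ref{lem:U-Lambda-Exp-U-est}, \ref{lem:U-Lambda-Exp-diff-alpha<1} and \ref{alphabiiger1} to get $-c_1\Lambda(x)+o(\Lambda(x))$. Your explicit exponent bookkeeping showing the error terms are $o(\Lambda(x))$ is exactly the absorption step the paper performs implicitly.
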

\begin{proof}
From Lemma \ref{lem:error-estimate} and definition of $\Lambda(x)$ given in \eqref{Lambda}
it follows that there exists sufficiently large $\delta(x)$ such that
\begin{align}\label{fosmallLambda}
f(x)\le C \frac{M(x)}{\delta (x)^{\alpha +\varepsilon}}\le C \frac{\Lambda(x)}{\delta (x)^{\varepsilon/2}}\le\varepsilon \Lambda(x).
\end{align}
Similarly, from  Lemmas \ref{lem:U-Lambda-Exp-U-est}, \ref{lem:U-Lambda-Exp-diff-alpha<1} and  \ref{alphabiiger1}
we can conclude that there exist two constants $c_1,c_2>0$ such that
\begin{align*}
f_U(x)&= \bbE [U_\Lambda (x+Z)] - U_\Lambda(x) + \bbE [U_\Lambda (x+X) - U_\Lambda (x+Z)]\\&\le-c_1\Lambda(x)
+ c_2 \frac{M(x)}{\delta(x)^{\alpha +\varepsilon}} +o(\Lambda (x))=-c_1\Lambda(x) +o(\Lambda (x))\le-c_0\Lambda(x)
\end{align*}
for some $c_0>0$ which completes the proof.
\end{proof}

\subsection{Supermartingale and harmonic function}
We denote
\begin{equation}\label{W}
W(x)=M(x)+U_\Lambda(x),\quad x\in K.
\end{equation}
We fix $x_0\in K$ such that $\mathrm{dist}(x_0,x_0+K)>0$. As the cone is circular, we could evidently choose $x_0=e_d$.
For any $R\geq 0$ and  $c\ge 0$ we set
\begin{align}
\label{Y_n}
\nonumber
Y_n^{(c)}
&=W(x+Rx_0+S(n))
{\rm 1}\{\tau_x>n\}\\
&\hspace{3cm}
+c\sum_{k=0}^{n-1}\Lambda(x+Rx_0+S(k)){\rm 1}\{\tau_x>k\},
\end{align}
and for $c=0$ we use the following simplified notation
\begin{equation}\label{Ynzero}
Y_n=Y_n^{(0)}.
\end{equation}
We denote by $\mathcal{F}_n$ the natural filtration  generated by the random walk $S(n)$.
The key result for our further analysis is the following fact.
\begin{lemma}
\label{lem:submart}
For every $0\le c<c_0$ with $c_0$ specified in Lemma \ref{supermartingale1} there exists $R>0$ such that the sequence
$Y_n^{(c)}$
is a $\mathcal{F}_n$-supermartingale.
\end{lemma}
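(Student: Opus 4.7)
The plan is to verify the supermartingale inequality $\bbE[Y_{n+1}^{(c)}-Y_n^{(c)}\mid\mathcal{F}_n]\le 0$ pointwise, splitting on $\{\tau_x\le n\}$ and $\{\tau_x>n\}$. On $\{\tau_x\le n\}$, both terms in the definition of $Y_n^{(c)}$ carry indicators of the form $\mathbf{1}\{\tau_x>k\}$ with $k\ge n$, and the only term added in passing from $n$ to $n+1$ is $c\Lambda(z_n)\mathbf{1}\{\tau_x>n\}$, which vanishes. So $Y_{n+1}^{(c)}=Y_n^{(c)}$ identically on that event, and the substance lies on $\{\tau_x>n\}$.

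On $\{\tau_x>n\}$ the key geometric point is that $z_n:=x+Rx_0+S(n)$ lies deep inside $K$. Since $K$ is a convex cone with apex at $0$, it is closed under addition, and for any $a,b\in K$ we have $\delta(a+b)\ge\delta(a)$: the ball $B(a,\delta(a))\subset K$ gives $B(a+b,\delta(a))=B(a,\delta(a))+b\subset K+K\subset K$. Applying this with $a=Rx_0$ and $b=x+S(n)\in K$ yields $\delta(z_n)\ge R\,\delta(x_0)$ on $\{\tau_x>n\}$. Hence choosing $R$ so that $R\,\delta(x_0)$ exceeds the threshold on $\delta(\cdot)$ appearing in Lemma~\ref{supermartingale1} places $z_n$ in the regime where that lemma is available.

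Still on $\{\tau_x>n\}$, by definition
\[
\bbE[Y_{n+1}^{(c)}-Y_n^{(c)}\mid\mathcal{F}_n]
=\bbE\bigl[W(z_n+X)\mathbf{1}\{\tau_x>n+1\}\mid\mathcal{F}_n\bigr]-W(z_n)+c\Lambda(z_n).
\]
Extending $W$ by zero off $K$ (consistent with $M\equiv 0$ there and with setting $U_\Lambda\equiv 0$ on $K^c$), one has $W\ge 0$ everywhere. The same cone additivity gives $\{\tau_x>n+1\}=\{x+S(n+1)\in K\}\subset\{z_n+X\in K\}$, so $W(z_n+X)\mathbf{1}\{\tau_x>n+1\}\le W(z_n+X)$. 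Splitting $W=M+U_\Lambda$ turns the expectation difference into $f(z_n)+f_U(z_n)$, with $f$ defined in \eqref{errorfunction} and $f_U$ in \eqref{fLambda}, and Lemma~\ref{supermartingale1} bounds this sum by $-c_0\Lambda(z_n)$. Combining,
\[
\bbE[Y_{n+1}^{(c)}-Y_n^{(c)}\mid\mathcal{F}_n]\le(c-c_0)\Lambda(z_n)\le 0
\]
whenever $c<c_0$, which is the required inequality.

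The main technical point is the cone inequality $\delta(a+b)\ge\delta(a)$ for $a,b\in K$, used twice: once to push $\delta(z_n)$ above the threshold in Lemma~\ref{supermartingale1}, and once to strip off $\mathbf{1}\{\tau_x>n+1\}$ against the non-negative $W$. Everything else is a mechanical assembly of lemmas already proved. Integrability of $Y_n^{(c)}$, which is needed to make sense of the conditional expectations, follows from $W(y)\le C(1+|y|)^\beta$ (via Lemma~\ref{lem:U_Lambda-o-small-of-M} and \eqref{eq:M-beta-bound}) and $\Lambda(y)\le M(y)\le C|y|^\beta$, together with $\beta<\alpha$ and the existence of $\beta$-moments for increments in the normal domain of attraction of an $\alpha$-stable law.
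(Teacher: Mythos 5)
Your proof is correct and follows essentially the same route as the paper: drop the indicator $\mathbf{1}\{\tau_x>n+1\}$ using $W\ge 0$, identify the conditional increment of $W(x+Rx_0+S(\cdot))$ with $f+f_U$, and invoke Lemma~\ref{supermartingale1} to get the bound $(c-c_0)\Lambda\le 0$. Your explicit verification that $\delta(x+Rx_0+S(n))\ge R\,\delta(x_0)$ on $\{\tau_x>n\}$ (so that Lemma~\ref{supermartingale1} is applicable) and your integrability remark only make explicit points the paper leaves implicit.
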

\begin{proof}
We have
\begin{align*}
&\bbE[Y_n^{(c)}-Y_{n-1}^{(c)}|\mathcal{F}_{n-1}]\\
&\hspace{1cm}=\bbE[W(x+Rx_0+S(n))\mathbf{1}\{\tau_x>n\}
-W(x+Rx_0+S(n-1))\mathbf{1}\{\tau_x>n-1\}|\mathcal{F}_{n-1}]\\
&\hspace{2cm}+c\Lambda(x+Rx_0+S(n-1))\mathbf{1}\{\tau_x>n-1\}.
\end{align*}
Since the functions $M$ and $U_\Lambda$ are nonnegative (and hence $W(x) \ge 0$), then for the error function $f$ defined in \eqref{errorfunction} and
for the function $f_\Lambda$ given in \eqref{fLambda}
we can get the following upper bound
\begin{align*}
&\bbE[Y_n^{(c)}-Y_{n-1}^{(c)}|\mathcal{F}_{n-1}]\\
&\hspace{1cm}\le\mathbf{1}\{\tau_x>n-1\}\bbE[W(x+Rx_0+S(n))
-W(x+Rx_0+S(n-1))|\mathcal{F}_{n-1}]\\
&\hspace{2cm}+c\Lambda(x+Rx_0+S(n-1))\mathbf{1}\{\tau_x>n-1\}\\
&\hspace{1cm}=\big(f(x+Rx_0+S(n-1))+f_U(x+Rx_0+S(n-1))\big)
\mathbf{1}\{\tau_x>n-1\}\\
&\hspace{2cm}+c\Lambda(x+Rx_0+S(n-1))\mathbf{1}\{\tau_x>n-1\}.
\end{align*}
According to Lemma \ref{supermartingale1}
$$
\bbE[Y_n^{(c)}-Y_{n-1}^{(c)}|\mathcal{F}_{n-1}]
\le -\left(c_0-c\right)
\Lambda(x+Rx_0+S(n-1))\mathbf{1}\{\tau_x>n-1\}.
$$
This completes the proof.
\end{proof}
To construct a positive harmonic function
we need
the following auxiliary estimate.
\begin{lemma}
\label{lem:beta_bound}
There exists $C>0$, such that for sufficiently large $R$,
\begin{equation}
\label{eq:beta_1}
\bbE\left[\sum_{k=0}^{\tau_x-1}\Lambda(x+Rx_0+S(k))\right]
\le C\, W(x+Rx_0).
\end{equation}
Furthermore, there exists $\varepsilon(R)\downarrow 0$ as $R\rightarrow+\infty$ such that
\begin{equation}
\label{eq:beta_2}
\bbE\left[\sum_{k=0}^{\tau_x-1}|f(x+Rx_0+S(k))|\right]
\le\varepsilon(R)\, M (x+Rx_0).
\end{equation}
\end{lemma}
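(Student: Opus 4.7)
The first estimate \eqref{eq:beta_1} is an immediate consequence of the supermartingale property. Fix any $c\in(0,c_0)$ and pick $R$ large enough that Lemma \ref{lem:submart} applies. Since $Y_n^{(c)}$ is a nonnegative $\mathcal{F}_n$-supermartingale with $Y_0^{(c)}=W(x+Rx_0)$, taking expectations and dropping the nonnegative boundary term $\bbE[W(x+Rx_0+S(n))\mathbf{1}\{\tau_x>n\}]$ yields
\[
c\,\bbE\sum_{k=0}^{n-1}\Lambda(x+Rx_0+S(k))\mathbf{1}\{\tau_x>k\}\le W(x+Rx_0).
\]
Monotone convergence as $n\to\infty$ gives \eqref{eq:beta_1} with $C=1/c$; moreover Lemma \ref{lem:U_Lambda-o-small-of-M} together with the inequality $\delta(x+Rx_0)\ge c_1 R$ implies $W(x+Rx_0)\le C_1 M(x+Rx_0)$ for $R$ large, which is the bound we will repeatedly use in the second part.

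For \eqref{eq:beta_2}, Lemma \ref{lem:error-estimate} combined with the definition \eqref{Lambda} of $\Lambda$ yields
\[
|f(y)|\le C\,\frac{M(y)}{\delta(y)^{\alpha+\varepsilon}}\le C\,\frac{\Lambda(y)}{\delta(y)^{\varepsilon/2}},\qquad \delta(y)\ge 1,
\]
so $|f|$ is dominated by $\Lambda$ with an extra decaying factor in $\delta$. My plan is to choose a threshold $T=T(R)$ with $T\to\infty$ and $T/R\to 0$ (for concreteness $T=R^{1/2}$), and split the expected sum based on whether $\delta(x+Rx_0+S(k))\ge T$ or not. On the interior region $\{\delta\ge T\}$ the inequality above gives $|f|\le CT^{-\varepsilon/2}\Lambda$, so part (1) combined with $W\le C_1 M$ produces a contribution of order $T^{-\varepsilon/2}M(x+Rx_0)$, which is $o(M(x+Rx_0))$.

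The main obstacle is the complementary region $\{\delta<T\}$, where the pointwise bound above degenerates. The idea is to exploit that $\delta(x+Rx_0)\ge c_1 R$: to reach $\delta<T$ the walk must travel a distance of order $R$, a rare event in view of the polynomial tails of the $\alpha$-stable-type increments. Using the universal estimate $|f(y)|\le C\Lambda(y)$ valid for $\delta(y)\ge 1$, together with Lemma \ref{lem:Green-bounds-our} to control the expected occupation time of the killed walk inside the boundary strip (with $T=R^{1/2}$ chosen to balance the two contributions), one shows that this piece is also $o(M(x+Rx_0))$. Combining both estimates yields \eqref{eq:beta_2} with some $\varepsilon(R)\downarrow 0$ as $R\to+\infty$.
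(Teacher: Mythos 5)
Your proof of \eqref{eq:beta_1} is correct and is exactly the paper's argument: the supermartingale property of $Y_n^{(c)}$ from Lemma \ref{lem:submart}, dropping the nonnegative $W$-term, and monotone convergence. The observation $W(x+Rx_0)\le C_1 M(x+Rx_0)$ via Lemma \ref{lem:U_Lambda-o-small-of-M} is also the right ingredient for the second part.

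For \eqref{eq:beta_2}, however, there is a genuine gap, and it stems from overlooking the whole point of the shift by $Rx_0$. The exit time $\tau_x$ is defined through the \emph{unshifted} walk $x+S(n)$, so on the event $\{\tau_x>k\}$ one has $x+S(k)\in K$, and since $x_0$ lies on the axis of the right circular cone, every point of $K+Rx_0$ is at distance at least $c_1R$ from $\partial K$; hence $\delta(x+Rx_0+S(k))\ge c_1R$ for \emph{all} $k<\tau_x$. Consequently your ``boundary strip'' $\{\delta<T\}$ with $T=R^{1/2}\ll c_1R$ is never visited before $\tau_x$ — it is empty for large $R$ — and the uniform bound \eqref{fosmallLambda}, i.e.\ $|f(y)|\le C\Lambda(y)\delta(y)^{-\varepsilon/2}\le C(c_1R)^{-\varepsilon/2}\Lambda(y)$ from Lemma \ref{lem:error-estimate}, applies along the entire trajectory; combined with \eqref{eq:beta_1} and $W(x+Rx_0)\le C_1M(x+Rx_0)$ this immediately gives \eqref{eq:beta_2} with $\varepsilon(R)=CR^{-\varepsilon/2}$, which is precisely how the paper concludes. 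Because you missed this, you were forced to treat a near-boundary contribution, and the argument you offer for it is not a proof: you assert that reaching $\{\delta<T\}$ is a rare event and propose to control ``the expected occupation time of the killed walk inside the boundary strip'' via Lemma \ref{lem:Green-bounds-our} — but that lemma bounds the Green function of the killed \emph{stable process}, not of the random walk, and no estimate for random-walk occupation times near $\partial K$ is available in (or needed by) the paper. As written, the second estimate therefore remains unproven; the fix is the single geometric observation above, after which your interior-region computation already yields the full claim.
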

\begin{proof}
Let $R$ be sufficiently large and $c$ sufficiently small such that $(Y_n^{(c)})_{n\ge 0}$ defined in \eqref{Y_n}
is a supermartingale.
Then,
$$
\bbE [Y_n^{(c)}]\le Y_0^{(c)}=W(x+Rx_0).
$$
Due to non-negativity of $M$ and $U_\Lambda$ we have
$$
\bbE\left[\sum_{k=0}^{n-1}\Lambda(x+Rx_0+S(k))
\mathbf{1}\{\tau_x>k\}\right]
\le C W(x+Rx_0).
$$
Letting here $n\to\infty$ completes the proof of \eqref{eq:beta_1}.
Estimate \eqref{eq:beta_2} follows from \eqref{eq:beta_1}, Lemma \ref{lem:U_Lambda-o-small-of-M}
and estimate \eqref{fosmallLambda}.
\end{proof}

In the next result we establish the existence of the 'shifted' harmonic
function  $V_R(x)$ for killed random walk on exiting from the cone $K$.
\begin{proposition}\label{harmonicfunction}
For  $R> 0$ sufficiently large, the following function
\begin{equation}\label{defW}
V_R(x)=\lim_{n\to\infty}\bbE[M(x+Rx_0+S(n));\tau_x>n]
\end{equation}
is finite and harmonic for the random walk $S(n)$ killed on exiting from $K$, that is,
\begin{equation}\label{Wharmonic}
    V_R(x)=\bbE\left[V_R(x+S(n));\tau_x>n\right],\quad x\in K,\ n\ge1.
\end{equation}
\end{proposition}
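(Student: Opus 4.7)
The plan is to follow the strategy sketched in the introduction: build a natural martingale from $M$ and the error function $f$, pass to the limit via dominated convergence relying on Lemma~\ref{lem:beta_bound}, and then extract harmonicity via the Markov property. Concretely, for fixed $x\in K$ and $R$ large enough so that both Lemma~\ref{lem:submart} and Lemma~\ref{lem:beta_bound} apply, introduce
\[
L_n = M\bigl(x+Rx_0+S(n\wedge\tau_x)\bigr) - \sum_{k=0}^{n\wedge\tau_x -1} f\bigl(x+Rx_0+S(k)\bigr),\qquad n\ge 0.
\]
A direct computation using the very definition $f(y)=\bbE[M(y+X)]-M(y)$ and the identity $(n+1)\wedge\tau_x - n\wedge\tau_x=\mathbf{1}\{\tau_x>n\}$ shows $\bbE[L_{n+1}-L_n\mid\mathcal F_n]=0$, so $(L_n)$ is an $\mathcal F_n$-martingale with $L_0=M(x+Rx_0)$.

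Taking expectations and using the crucial fact that $M\equiv 0$ on $K^c$, which ensures $M(x+Rx_0+S(\tau_x))=0$ on $\{\tau_x<\infty\}$, yields
\begin{equation}\label{planeq1}
\bbE\bigl[M(x+Rx_0+S(n));\tau_x>n\bigr]
= M(x+Rx_0)+\bbE\!\left[\sum_{k=0}^{n\wedge\tau_x-1} f\bigl(x+Rx_0+S(k)\bigr)\right].
\end{equation}
By estimate \eqref{eq:beta_2} of Lemma~\ref{lem:beta_bound}, the random variable $\sum_{k=0}^{\tau_x-1}|f(x+Rx_0+S(k))|$ has finite expectation, which provides the dominating majorant needed to apply the dominated convergence theorem to the right-hand side of \eqref{planeq1}. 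Letting $n\to\infty$ gives
\[
V_R(x) = M(x+Rx_0) + \bbE\!\left[\sum_{k=0}^{\tau_x-1} f\bigl(x+Rx_0+S(k)\bigr)\right],
\]
which is finite. This establishes the existence claim.

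For the harmonicity identity \eqref{Wharmonic}, I will apply the Markov property at time $n$ to the explicit representation above. On $\{\tau_x>n\}$ the shifted walk $S'(k):=S(n+k)-S(n)$ is independent of $\mathcal F_n$, distributed as $S$, and satisfies $\tau_{x+S(n)}^{S'}=\tau_x-n$. Hence
\[
\bbE\bigl[V_R(x+S(n));\tau_x>n\bigr]
= \bbE\bigl[M(x+Rx_0+S(n));\tau_x>n\bigr]
+ \bbE\!\left[\sum_{k=n}^{\tau_x-1} f\bigl(x+Rx_0+S(k)\bigr)\right].
\]
Substituting \eqref{planeq1} (with $n$ fixed) into the first term on the right and recombining the two resulting sums as $\sum_{k=0}^{\tau_x-1}$, I obtain exactly $V_R(x)$, which is the desired identity.

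The main obstacle is the justification of the dominated convergence step used both to define $V_R(x)$ and to rearrange the sums in the Markov computation; this is precisely what the two bounds in Lemma~\ref{lem:beta_bound} are designed to supply, and those bounds in turn rely on the supermartingale property of $Y_n^{(c)}$ from Lemma~\ref{lem:submart}, so no new estimates are required here. A minor technical check is that the identity $M(x+Rx_0+S(\tau_x))=0$ on $\{\tau_x\le n\}$ is valid for all realisations, since $M$ is defined to vanish on $K^c$; this is what allows us to translate the optional-stopping identity into the expression \eqref{planeq1} involving the conditional expectation on $\{\tau_x>n\}$.
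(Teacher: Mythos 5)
Your construction of the martingale $L_n$ and your appeal to Lemma \ref{lem:beta_bound} for the limit passage follow the intended strategy, but there is a genuine error at the key step: the claim that $M(x+Rx_0+S(\tau_x))=0$ on $\{\tau_x<\infty\}$ is false. The stopping time $\tau_x$ is the exit time of the \emph{unshifted} walk $x+S(n)$ from $K$, whereas $M$ is evaluated at the point shifted by $Rx_0$ towards the interior of the cone (here $x_0=e_d$, so $K+Rx_0\subset K$). At time $\tau_x$ one only knows $x+S(\tau_x)\notin K$; whenever the overshoot beyond $\partial K$ is smaller than the inward displacement $Rx_0$, the shifted point $x+Rx_0+S(\tau_x)$ still lies in $K$, where $M$ is strictly positive by \eqref{eq:M-Michalik-low}. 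Consequently your displayed identity $\bbE[M(x+Rx_0+S(n));\tau_x>n]=M(x+Rx_0)+\bbE\bigl[\sum_{k=0}^{n\wedge\tau_x-1}f(x+Rx_0+S(k))\bigr]$ is missing the term $-\bbE[M(x+Rx_0+S(\tau_x));\tau_x\le n]$, and both your closed formula for $V_R(x)$ and the Markov-property verification of \eqref{Wharmonic} built on it are incorrect as stated.

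The omission is repairable, but not for free. Retaining the boundary term one obtains, as in the paper's \eqref{eq:proof.1}, the identity with the extra summand $-\bbE[M(x+Rx_0+S(\tau_x));\tau_x\le n]$; its limit $\bbE[M(x+Rx_0+S(\tau_x))]$ is obtained by monotone convergence and its finiteness has to be argued (it follows from the nonnegativity of the left-hand side together with \eqref{eq:beta_2}, i.e.\ ultimately from the supermartingale bound of Lemma \ref{lem:submart}); your proof contains no substitute for this step because you declared the term to vanish. With the corrected representation your Markov-property computation of $\bbE[V_R(x+S(n));\tau_x>n]$ can in fact be pushed through (the boundary terms recombine), which would give a route to \eqref{Wharmonic} somewhat different from the paper's: the paper instead proves harmonicity by passing to the limit in the one-step identity $\bbE[M(x+Rx_0+S(n+1));\tau_x>n+1]=\int_K\bbP(x+S(1)\in \ud y)\,\bbE[M(y+Rx_0+S(n));\tau_y>n]$, using the domination $\bbE[M(y+Rx_0+S(n));\tau_y>n]\le C|y+Rx_0|^{\beta}$ and the finiteness of the $\beta$-moment of $X$ (since $\beta<\alpha$). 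As written, however, your argument rests on the false vanishing claim and does not establish the proposition.
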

\begin{proof}
We consider the following process
$$
L_n=M(x+Rx_0+S(n\wedge\tau_x))
-\sum_{k=0}^{n\wedge\tau_x-1}f(x+Rx_0+S(k)),\quad n\ge0.
$$
Recalling that $f(y)=\bbE[M(y+X)]-M(y)$, we can conclude that $L_n$ is a martingale.
By the optional stopping theorem applied to this martingale,
\begin{align*}
M(x+Rx_0)
&=\bbE[L_0]=\bbE[L_n]\\
&=\bbE\left[M(x+Rx_0+S(n))
-\sum_{k=0}^{n-1}f(x+Rx_0+S(k));\tau_x>n\right]\\
&\hspace{1cm}
+\bbE\left[M(x+Rx_0+S(\tau_x))
-\sum_{k=0}^{\tau_x-1}f(x+Rx_0+S(k));\tau_x\le n\right].
\end{align*}
Consequently,
\begin{align}\label{eq:proof.1}
\begin{split}
\bbE[M(x+Rx_0+S(n));\tau_x&>n]\\
&=M(x+Rx_0)-\bbE[M(x+Rx_0+S(\tau_x));\tau_x\le n]\\
&\qquad +\bbE\left[\sum_{k=0}^{\tau_x-1}f(x+Rx_0+S(k));\tau_x\le n\right]\\
&\qquad +\bbE\left[\sum_{k=0}^{n-1}f(x+Rx_0+S(k));\tau_x>n\right].
\end{split}
\end{align}
By the monotone convergence theorem, we obtain
\begin{align*}
\lim_{n\to \infty}
\bbE[M(x+Rx_0+S(\tau_x));\tau_x\le n]
=
\bbE [M(x+Rx_0 +S(\tau_x))].
\end{align*}
Further, by \eqref{eq:beta_2} for $R>0$ large enough the random variable $\sum_{k=0}^{\tau_x-1}f(x+Rx_0+S(k))$ is integrable and whence we can apply the dominated convergence theorem to  conclude that
\begin{equation*}
\lim_{n\to\infty}
\bbE\left[\sum_{k=0}^{\tau_x-1}f(x+Rx_0+S(k));\tau_x\le n\right]
=\bbE\left[\sum_{k=0}^{\tau_x-1}f(x+Rx_0+S(k))\right]
\end{equation*}
and
\begin{align*}
&\limsup_{n\to\infty}\left|\bbE\left[\sum_{k=0}^{n-1}f(x+Rx_0+S(k));\tau_x>n\right]\right|\\
&\hspace{2cm}\le \limsup_{n\to\infty}\bbE\left[\sum_{k=0}^{\tau_x-1}|f(x+Rx_0+S(k))|;\tau_x>n\right]=0.
\end{align*}
We arrive at
\begin{align}\label{eq:proof.5}
\begin{split}
&\lim_{n\to\infty}\bbE[M(x+Rx_0+S(n));\tau_x>n]\\
&
\hspace{5mm}=M(x+Rx_0)-\bbE[M(x+Rx_0+S(\tau_x))]
+\bbE\left[\sum_{k=0}^{\tau_x-1}f(x+Rx_0+S(k))\right].
\end{split}
\end{align}
By \eqref{eq:beta_2}, the limit above is finite and thus we are left to prove that the function
\begin{align*}
V_R(x) = \lim_{n\to\infty}\bbE[M(x+Rx_0+S(n));\tau_x>n]
\end{align*}
is harmonic for the killed random walk.
To do this end, we use the following identity
\begin{multline}\label{eq:bound-for-Leb}
\bbE[M(x+Rx_0+S(n+1));\tau_x>n+1]\\
= \int_{K} \bbP (x+S(1)\in \ud y)\, \bbE [M(y+Rx_0 +S(n)); \tau_x>n].
\end{multline}
Applying \eqref{eq:proof.1} and the fact that the function $M$ is non-negative and bounded by the function $W$ we obtain
\begin{align*}
\bbE[M(y+Rx_0+S(n));\tau_x>n]
&\leq
W(y+Rx_0)
+
\bbE\left[\sum_{k=0}^{\tau_x-1}f(x+Rx_0+S(k))\right]\\
&\leq
C\, W(y+Rx_0)\leq C_1 |y+Rx_0|^\beta,
\end{align*}
where we applied \eqref{eq:beta_1} and the fact that $W(x)\leq C|x|^\beta$. Since $\beta <\alpha$, the $\beta$-moment of $X$ is finite and whence we are allowed to use the dominated convergence theorem in \eqref{eq:bound-for-Leb} which yields
\begin{align*}
V_R(x) = \int_{K} \bbP (x+S(1)\in \ud y)\, V_R(y)\, \ud y
=
\bbE [V_R(x+X); \tau_x >1],
\end{align*}
 and this completes the proof.
\end{proof}

\section{Asymptotic behaviour of $\bbP(\tau_x>n)$}\label{sec:asymptoticstau}
\subsection{Preliminary estimates}
We first prove a uniform upper bound for the tail of $\tau_x$.
We recall that function $W$ is defined in \eqref{W}.
\begin{lemma}
\label{lem:UB}
There exist finite constants $C>0$ and $R>0$ such that
$$
\bbP(\tau_x>n)\le C\frac{W(x+Rx_0)}{n^{\beta/\alpha}},
\quad n\ge1.
$$
\end{lemma}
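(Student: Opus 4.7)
The plan is to combine the supermartingale property from Lemma \ref{lem:submart} with a matching lower bound obtained via the functional central limit theorem \eqref{eq:weak-conv}. First, I would fix $R>0$ so large that Lemma \ref{lem:submart} applies with $c=0$, so that $Y_n = W(x+Rx_0+S(n))\mathbf{1}\{\tau_x>n\}$ is a non-negative $\mathcal{F}_n$-supermartingale. Taking expectations at time $n$ and using that $W\ge M$ gives
\begin{align*}
\bbE[M(x+Rx_0+S(n));\tau_x>n] \le \bbE[W(x+Rx_0+S(n));\tau_x>n] \le W(x+Rx_0).
\end{align*}

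To convert this into the stated estimate on $\bbP(\tau_x>n)$, it then suffices to establish the matching lower bound
\begin{align*}
\bbE[M(x+Rx_0+S(n));\tau_x>n] \ge c_0 \, n^{\beta/\alpha}\, \bbP(\tau_x>n)
\end{align*}
for some $c_0>0$ independent of $n$. The idea is to restrict the expectation to a \emph{good} event $A_n=\{S(n)\in n^{1/\alpha}D\}$, where $D\subset K^{\circ}$ is a fixed compact set at positive distance from both the origin and $\partial K$. On $A_n\cap\{\tau_x>n\}$ the asymptotics $x+Rx_0+S(n)\sim n^{1/\alpha} d$ with $d\in D$ combined with \eqref{eq:M-Michalik-low} force $M(x+Rx_0+S(n))\ge c\, n^{\beta/\alpha}$. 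It remains to prove the ratio bound
\begin{align*}
\bbP\bigl(\tau_x>n,\,S(n)/n^{1/\alpha}\in D\bigr) \ge c'\, \bbP(\tau_x>n).
\end{align*}

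The main obstacle is precisely this ratio estimate: a positive fraction of the surviving trajectories must land in the interior of $K$ at scale $n^{1/\alpha}$. My plan for this step is to apply the Markov property at time $n$ to write $\bbP(\tau_x>2n)=\bbE[\bbP_{x+S(n)}(\tau>n);\tau_x>n]$ and then to exploit \eqref{eq:weak-conv}---namely, to show that $\bbP_y(\tau>n)$ is bounded below by a positive constant when $y$ lies in the rescaled interior $n^{1/\alpha}D$, while it decays rapidly near $\partial K$. A doubling bootstrap (from survival up to $n$ to survival up to $2n$) converts this dichotomy into the desired ratio bound. Since the FCLT alone is not quantitative near the boundary, the delicate point is ruling out concentration of surviving walks in a thin layer close to $\partial K$; this is where the Martin-kernel estimates of Section \ref{sec:prel} are needed to transfer entrance-law information from the stable process $Z$ to the random walk $S$.
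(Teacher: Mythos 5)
Your first step (the supermartingale bound $\bbE[W(x+Rx_0+S(n));\tau_x>n]\le W(x+Rx_0)$) is fine, but the conversion into a bound on $\bbP(\tau_x>n)$ hinges entirely on the ratio estimate $\bbP(\tau_x>n,\,S(n)/n^{1/\alpha}\in D)\ge c'\,\bbP(\tau_x>n)$, and this is exactly where the proposal has a genuine gap. The FCLT \eqref{eq:weak-conv} is an unconditional statement; it says nothing about the law of $S(n)/n^{1/\alpha}$ on the vanishing-probability event $\{\tau_x>n\}$, so it cannot by itself rule out that the surviving mass concentrates in a thin boundary layer or at distances $\gg n^{1/\alpha}$ (the latter is needed because your argument uses a \emph{compact} $D$, and when $\beta<\alpha/2$ the bound \eqref{eq:M-Michalik-low} does not give $M\ge c\,n^{\beta/\alpha}$ far away). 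Your proposed fix, writing $\bbP(\tau_x>2n)=\bbE[\bbP_{x+S(n)}(\tau>n);\tau_x>n]$ and bootstrapping, implicitly needs a doubling inequality of the type $\bbP(\tau_x>2n)\ge c\,\bbP(\tau_x>n)$ (or some other a priori control of the ratio of survival probabilities at scales $n$ and $2n$), which is not available at this stage; in the paper such non-concentration statements (e.g.\ Lemma~\ref{lem:ETail}) are proved \emph{after} and \emph{using} Lemma~\ref{lem:UB}, so your route is close to circular: the ratio bound you need is essentially the tightness of the conditioned walk, i.e.\ part of what the upper bound is later used to establish.

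Note also that the lower bound you aim for is not needed. The paper's proof avoids it entirely: split $\{\tau_x>n\}$ at time $n/2$ according to whether $\delta(x+S(n/2))$ is smaller or larger than $\varepsilon n^{1/\alpha}$; in the thin layer, compare $\tau_z$ with the exit time from a half-space (convexity of $K$) to get $\bbP(\tau_z>n/2)\le C_1\varepsilon^{\alpha/2}$, contributing $C_1\varepsilon^{\alpha/2}\bbP(\tau_x>n/2)$; in the interior, use $W(z+Rx_0)\ge c(\varepsilon n^{1/\alpha})^{\beta}$ together with Chebyshev and the supermartingale from Lemma~\ref{lem:submart} to get $C_2\varepsilon^{-\beta}W(x+Rx_0)n^{-\beta/\alpha}$; then iterate the resulting recursion in $n\mapsto n/2$ with $\varepsilon$ fixed small enough that $2^{\beta/\alpha}C_1\varepsilon^{\alpha/2}<1$. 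If you want to salvage your plan, you would have to supply a quantitative boundary-layer estimate and an iteration of exactly this kind before the ratio bound can be claimed, at which point the lower-bound detour becomes superfluous.
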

\begin{proof}
 For every $\varepsilon>0$ one has
\begin{equation}
\label{eq:ub1}
\bbP(\tau_x>n)=
\bbP(\tau_x>n,\delta(x+S(n/2))\ge\varepsilon n^{1/\alpha})
+\bbP(\tau_x>n,\delta(x+S(n/2))<\varepsilon n^{1/\alpha}).
\end{equation}
Set $D_1=\{z\in K: \delta(z)<\varepsilon n^{1/\alpha}\}$.
By the Markov property,
$$
\bbP(\tau_x>n,\delta(x+S(n/2))<\varepsilon n^{1/\alpha})
=\int_{D_1}\bbP(x+S(n/2))\in dz,\tau_x>n/2)\bbP(\tau_z>n/2).
$$
Since $K$ is convex, we can upper bound the tail of $\tau_z$ by the tail of exit time from the half space. Therefore, for every $z\in D_1$ we have the following bound: there exist $n_0=n_0(\varepsilon)$ and an absolute constant $C_1$ such that
$$
\bbP(\tau_z>n/2)\le C_1\varepsilon^{\alpha/2},\quad
z\in D_1,\ n\ge n_0.
$$
This implies that
\begin{equation}
\label{eq:ub2}
\bbP(\tau_x>n,\delta(x+S(n/2))<\varepsilon n^{1/\alpha})
\le C_1\varepsilon^{\alpha/2} \bbP(\tau_x>n/2).
\end{equation}
Set now $D_2=K\setminus D_1$.
It follows from homogeneity \eqref{M-homog} of $M(x)$ and \eqref{eq:M-Michalik-low} that there exists constant $C$ such that
$$
W(z+Rx_0)\ge C M(z)\ge c(\varepsilon n^{1/\alpha})^\beta,
\quad z\in D_2.
$$
Consequently, by the Markov inequality,
\begin{align*}
\bbP(\tau_x>n,\delta(x+S(n/2)) \ge\varepsilon n^{1/\alpha})
\le \frac{1}{c\varepsilon^\beta n^{\beta/\alpha}}
\bbE\left[W(x+Rx_0+S(n/2));\tau_x>n/2\right].
\end{align*}
Recalling that $Y_k=W(x+Rx_0+S(k))\mathbf{1}\{\tau_x>k\}$ defined in \eqref{Ynzero} is a supermartingale (see Lemma \ref{lem:submart}), we obtain
\begin{equation}
\label{eq:ub3}
\bbP(\tau_x>n,\delta(x+S(n/2)\ge\varepsilon n^{1/\alpha})
\le C_2\varepsilon^{-\beta}\frac{W(x+Rx_0)}{n^{\beta/\alpha}}.
\end{equation}
Plugging \eqref{eq:ub2} and \eqref{eq:ub3} into \eqref{eq:ub1}, we have, for all $n\ge n_0$,
$$
\bbP(\tau_x>n)\le
C_2\varepsilon^{-\beta}\frac{W(x+Rx_0)}{n^{\beta/\alpha}}
+C_1\varepsilon^{\alpha/2} \bbP(\tau_x>n/2).
$$
If $\frac{n}{2^{m-1}}\ge n_0$ then we can repeat this procedure
$m-1$ times and get
$$
\bbP(\tau_x>n)\le
C_2\varepsilon^{-\beta}\frac{W(x+Rx_0)}{n^{\beta/\alpha}}
\sum_{j=0}^{m-1}
\left(2^{\beta/\alpha}C_1\varepsilon^{\alpha/2}\right)^j
+(C_1\varepsilon^{\alpha/2})^m\bbP(\tau_x>n/2^m).
$$
Choosing $\varepsilon$ so small that $2^{\beta/\alpha}C_1\varepsilon^{\alpha/2}<1$, we infer that
$$
\bbP(\tau_x>n)\le
\frac{C_2\varepsilon^{-\beta}}{1-2^{\beta/\alpha}C_1\varepsilon^{\alpha/2}}
\frac{W(x+Rx_0)}{n^{\beta/\alpha}}
+(C_1\varepsilon^{\alpha/2})^m.
$$
To bound the second summand on the right hand side we choose
$m$
such that
$\frac{n}{2^{m-1}}\ge n_0$ and $\frac{n}{2^{m}}< n_0$.
Combining the latter inequality with $C_1\varepsilon^{\alpha/2}\le 2^{-\beta/\alpha}$, we conclude that
$$
(C_1\varepsilon^\alpha)^m\le (2^m)^{-\beta/\alpha}
\le (n_0/n)^{-\beta/\alpha}.
$$
Recalling that $W(x+Rx_0)$ is separated from zero, we get the desired inequality.
\end{proof}
 We next prove a bound for the tail of the expectation of the supermartingale
 \begin{align*}
 Y_k=W(x+Rx_0+S(k))\mathbf{1}\{\tau_x>k\}.
 \end{align*}
\begin{lemma}
\label{lem:ETail}
For every $x\in K$ we have, uniformly in $n$,
$$
\bbE\left[W(x+Rx_0+S(n));
\max_{k\le n}|x+S(k)|>An^{1/\alpha}\right]
\le C(x,R)A^{\beta-\alpha}.
$$
\end{lemma}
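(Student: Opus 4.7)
The plan is to combine the supermartingale property of $Y_k := W(x+Rx_0+S(k))\mathbf{1}\{\tau_x>k\}$ (Lemma \ref{lem:submart}) with the tail bound of Lemma \ref{lem:UB} and the heavy-tailed estimate $\bbP(|X|>r)\le Cr^{-\alpha}$, which follows from $X$ lying in the normal domain of attraction of the $\alpha$-stable law. Introduce the stopping time
\[
\sigma:=\inf\{k\ge 0:|x+S(k)|>An^{1/\alpha}\}.
\]
For $A$ bounded the claim is immediate from $\bbE[Y_n]\le Y_0=W(x+Rx_0)$, so I may assume $A$ is so large that $|x|\le An^{1/\alpha}/2$. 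By optional stopping applied to the nonnegative supermartingale $Y_k$ and the bounded stopping time $\sigma\wedge n$,
\[
\bbE[Y_n;\sigma\le n]\le \bbE[Y_\sigma;\sigma\le n].
\]

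Next I decompose the expectation on the right according to the overshoot. For $j\ge 0$ let
\[
B_j:=\{|x+S(\sigma)|\in(2^jAn^{1/\alpha},2^{j+1}An^{1/\alpha}]\}.
\]
By \eqref{eq:M-beta-bound} and Lemma \ref{lem:U_Lambda-o-small-of-M}, $W(y)\le C|y|^\beta$, so $Y_\sigma\le C(2^jAn^{1/\alpha})^\beta$ on $B_j$. For $j\ge 1$ the inequalities $|x+S(\sigma-1)|\le An^{1/\alpha}$ and $|x+S(\sigma)|>2^jAn^{1/\alpha}$ force the single jump $|X(\sigma)|\ge(2^j-1)An^{1/\alpha}\ge 2^{j-1}An^{1/\alpha}$. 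Conditioning on $\mathcal{F}_{k-1}$ and using the heavy-tail bound for $X$ together with Lemma \ref{lem:UB} yield
\[
\bbP(\sigma\le n,B_j)\le C(2^{j-1}An^{1/\alpha})^{-\alpha}\sum_{k=0}^{n-1}\bbP(\tau_x>k)\le C(2^jA)^{-\alpha}n^{-\beta/\alpha},
\]
where I used $\sum_{k=1}^n k^{-\beta/\alpha}\le Cn^{1-\beta/\alpha}$ (valid since $\beta<\alpha$). Multiplying by the pointwise bound on $Y_\sigma$ gives $\bbE[Y_\sigma;\sigma\le n,B_j]\le C(2^jA)^{\beta-\alpha}$, and summing the geometric series in $j\ge 1$ provides the required contribution $CA^{\beta-\alpha}$.

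The main obstacle is the piece $B_0$ (small overshoot), where the final jump need not be large: the walk may crawl into the annulus $(An^{1/\alpha}/2,An^{1/\alpha}]$ through many moderate steps. The crude bound $(An^{1/\alpha})^\beta\bbP(\sigma\le n)\le CA^{\beta-\alpha}n^{\beta/\alpha}$ obtained from Pruitt's maximal inequality contains a spurious $n^{\beta/\alpha}$. To remove it, for the $B_0$ contribution I do not pass to $Y_\sigma$; instead I apply the strong Markov property at $\sigma$ and invoke Lemma \ref{lem:UB} once more to the walk shifted to $y=x+S(\sigma)$: the factor $\bbP_y(\tau^y>n-\sigma)\le CW(y+Rx_0)/(n-\sigma)^{\beta/\alpha}$ carries the missing $(n-\sigma)^{-\beta/\alpha}$ decay. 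I then split the time window into $\{\sigma\le n/2\}$, where $n-\sigma\ge n/2$ and Lemma \ref{lem:UB} directly yields the $n^{-\beta/\alpha}$ factor needed to absorb $(An^{1/\alpha})^\beta$, and $\{\sigma>n/2\}$, where the walk must remain in $K$ for the initial $n/2$ steps so that Lemma \ref{lem:UB} gives $\bbP(\tau_x>n/2)\le Cn^{-\beta/\alpha}$ directly; combined with the maximal bound on $|x+S(k)|$ over $k\in(n/2,n]$ this closes the argument. The delicate interplay between the two time scales and the loss of the "large jump" structure on $B_0$ is the principal technical difficulty.
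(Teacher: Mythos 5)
Your treatment of the large-overshoot pieces $B_j$, $j\ge 1$, is fine (and somewhat different in flavour from the paper: a single-big-jump argument at the crossing time combined with $\sum_k\bbP(\tau_x>k)\le C(x,R)n^{1-\beta/\alpha}$ from Lemma~\ref{lem:UB}), but the small-overshoot piece $B_0$ — which you correctly identify as the heart of the lemma — is not actually handled, and the mechanism you propose for it does not work. After the strong Markov property at $\sigma$ the inner quantity is $h(y,l)=\bbE\bigl[W(y+Rx_0+S(l));\tau_y>l\bigr]$ with $y=x+S(\sigma)$, $l=n-\sigma$; Lemma~\ref{lem:UB} only bounds the \emph{probability} $\bbP(\tau_y>l)$, not $h(y,l)$, and you cannot pull the weight out: $W(y+Rx_0+S(l))$ is unbounded (the walk can travel far beyond $An^{1/\alpha}$ after time $\sigma$), so it is not $\le C(An^{1/\alpha})^\beta$ pointwise, while the supermartingale bound $h(y,l)\le W(y+Rx_0)$ carries no decay in $l$ — indeed $h(y,l)\ge\bbE[M(y+Rx_0+S(l));\tau_y>l]\to V_R(y)>0$ by Proposition~\ref{harmonicfunction}, so the advertised factor $(n-\sigma)^{-\beta/\alpha}$ simply is not there. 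Thus on $\{\sigma\le n/2\}\cap B_0$ your bound reduces to the same expectation at a shifted time, i.e.\ you are back where you started, and without an iteration scheme this loop never closes.

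The $\{\sigma>n/2\}\cap B_0$ sub-case has a second gap: knowing only that the first-half maximum is $\le An^{1/\alpha}$ while the crossing level is the same $An^{1/\alpha}$ gives no quantitative lower bound on the second-half increments, so "the maximal bound on $|x+S(k)|$ over $k\in(n/2,n]$" yields nothing. This is exactly why the paper's proof introduces a parameter $\gamma<1$: it splits on $\widehat S(n/2)\gtrless\gamma An^{1/\alpha}$, so that on the small-first-half event one gets $\widetilde S(n)=\max_{k\in[n/2,n]}|S(k)-S(n/2)|>(1-\gamma)An^{1/\alpha}$, the endpoint comparison $|x+Rx_0+S(n)|\le C\widetilde S(n)$, independence of $\widetilde S(n)$ from $\{\tau_x>n/2\}$, and the truncated moment bound $\bbE[\widetilde S(n)^\beta;\widetilde S(n)>(1-\gamma)An^{1/\alpha}]\le Cn^{\beta/\alpha}A^{\beta-\alpha}$, which together with Lemma~\ref{lem:UB} gives $CA^{\beta-\alpha}W(x+Rx_0)$; on the large-first-half event the supermartingale property passes from $Y_n$ to $Y_{n/2}$ with the threshold rescaled to $2^{1/\alpha}\gamma A(n/2)^{1/\alpha}$, and since $2^{1/\alpha}\gamma>1$ this can be iterated $m\asymp\log n$ times, the residual term being killed by $Y_k\le C|x+Rx_0+S(k)|^\beta$. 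To repair your proof you would need to import both of these ingredients (the $\gamma$-separation with the $\beta$-moment estimate for $\widetilde S(n)$, and the geometric iteration), at which point you essentially recover the paper's argument; the overshoot decomposition alone only disposes of the easy part.
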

\begin{proof}
For brevity, we use the following notation
\begin{align*}
\widehat{S}(j)=\max_{k\le j}|x+S(k)|.
\end{align*}
Fix some $\gamma\in(0,1)$ and split the expectation into two parts:
\begin{align}
\label{eq:et1}
\nonumber
&\bbE\left[Y_n;\max_{k\le n}|x+S(k)|>An^{1/\alpha}\right]
=\bbE\left[Y_n; \widehat{S}(n)>An^{1/\alpha}\right]\\
\nonumber
&\hspace{0.5cm}=\bbE\left[Y_n; \widehat{S}(n)>An^{1/\alpha},\, \widehat{S}(n/2)>A\gamma n^{1/\alpha}\right]\\
&\hspace{1cm}+\bbE\left[Y_n; \widehat{S}(n)>An^{1/\alpha},
\, \widehat{S}(n/2)\le A\gamma n^{1/\alpha}\right].
\end{align}
Using the supermartingale property of $Y_n$, one can easily see that
\begin{align}
\label{eq:et2}
\nonumber
&\bbE\left[Y_n; \widehat{S}(n)>An^{1/\alpha},
\, \widehat{S}(n/2)>A\gamma n^{1/\alpha}\right]\\
\nonumber
&\hspace{0.5cm}\le \bbE\left[Y_n; \widehat{S}(n/2)>A\gamma n^{1/\alpha}\right]\\
&\hspace{1cm}\le \bbE\left[Y_{n/2}; \widehat{S}(n/2)
>2^{1/\alpha}A\gamma (n/2)^{1/\alpha}\right].
\end{align}
It follows easily from the definition of $Y_n$ that
\begin{align*}
&\bbE\left[Y_n; \widehat{S}(n)>An^{1/\alpha},
\, \widehat{S}(n/2)\le A\gamma n^{1/\alpha}\right]\\
&\hspace{0.5cm}\le C\bbE\left[|x+Rx_0+S(n)|^\beta;\tau_x>n,\, \widehat{S}(n)>An^{1/\alpha},\, \widehat{S}(n/2)\le A\gamma n^{1/\alpha}\right].
\end{align*}
On the event $\{\widehat{S}(n)>An^{1/\alpha},\,
\widehat{S}(n/2)\le A\gamma n^{1/\alpha}\}$ we have
$$
\widetilde{S}(n)=\max_{k\in[n/2,n]}|S(k)-S(n/2)|>(1-\gamma)An^{1/\alpha}.
$$
Therefore,
$$
|x+Rx_0+S(n)|\le C\widetilde{S}(n)
$$
and, consequently,
\begin{align*}
&\bbE\left[Y_n; \widehat{S}(n)>An^{1/\alpha},\,
\widehat{S}(n/2)\le A\gamma n^{1/\alpha}\right]\\
&\hspace{1cm}\le C\bbE\left[\widetilde{S}(n)^\beta;\tau_x>n,
\widetilde{S}(n)>(1-\gamma)An^{1/\alpha}\right]\\
&\hspace{1cm}\le C\bbE\left[\widetilde{S}(n)^\beta;\tau_x>n/2,
\widetilde{S}(n)>(1-\gamma)An^{1/\alpha}\right]\\
&\hspace{1cm}= C\bbP(\tau_x>n/2)\bbE\left[\widetilde{S}(n)^\beta;\,
\widetilde{S}(n)>(1-\gamma)An^{1/\alpha}\right].
\end{align*}
Noting that, due to the functional limit theorem for the process $S(nt)/n^{n^{1/\alpha}}$,
$$
\bbE\left[\widetilde{S}(n)^\beta;
\widetilde{S}(n)>(1-\gamma)An^{1/\alpha}\right]
\le Cn^{\beta/\alpha}((1-\gamma)A)^{\beta-\alpha}
$$
and applying Lemma~\ref{lem:UB}, we conclude that
\begin{align}
\label{eq:et3}
\bbE\left[Y_n; \widehat{S}(n)>An^{1/\alpha},\, \widehat{S}(n/2)\le A\gamma n^{1/\alpha}\right]
\le C_1A^{\beta-\alpha}W(x+Rx_0),
\end{align}
where $C_1$ depends on $\gamma$. Plugging \eqref{eq:et2} and
\eqref{eq:et3} into \eqref{eq:et1}, we obtain
\begin{align*}
&\bbE\left[Y_n;\widehat{S}(n)>An^{1/\alpha}\right]\\
&\hspace{1cm}\le \bbE\left[Y_{n/2};
\widehat{S}(n/2)>2^{1/\alpha}A\gamma (n/2)^{1/\alpha}\right]+C\frac{W(x+Rx_0)}{A^{\alpha-\beta}}.
\end{align*}
Iterating this procedure $m$ times, we get
\begin{align*}
&\bbE\left[Y_n; \widehat{S}(n)>An^{1/\alpha}\right]\\
&\hspace{1cm}\le \bbE\left[Y_{n/2^m};
\widehat{S}(n/2^m)>2^{m/\alpha}A\gamma^m (n/2^m)^{1/\alpha}\right]\\
&\hspace{2cm}+C\frac{W(x+Rx_0)}{A^{\alpha-\beta}}
\sum_{j=0}^{m-1}(2^{1/\alpha}\gamma)^{-(\alpha-\beta)j}.
\end{align*}
Take $\gamma<1$ so that $2^{1/\alpha}\gamma>1$. Thus,
\begin{align*}
&\bbE\left[Y_n; \widehat{S}(n)>An^{1/\alpha}\right]\\
&\hspace{1cm}\le \bbE\left[Y_{n/2^m}; \widehat{S}(n/2^m)
>2^{m/\alpha}A\gamma^m (n/2^m)^{1/\alpha}\right]+C\frac{W(x+Rx_0)}{A^{\alpha-\beta}}.
\end{align*}
Using the fact that $Y_k\le C|x+Rx_0+S(k)|^\beta$, we conclude that
\begin{align*}
&\bbE\left[Y_{n/2^m}; \widehat{S}(n/2^m)>2^{m/\alpha}A\gamma^m (n/2^m)^{1/\alpha}\right]\\
&\hspace{1cm}\le C(x,R)
\bbE\left[ \widehat{S}(n/2^m)^\beta;\, \widehat{S}(n/2^m) >2^{m/\alpha}A\gamma^m (n/2^m)^{1/\alpha}\right]\\
&\hspace{1cm}\le C(x,R)
(n/2^m)^{\beta/\alpha}\left(2^{m/\alpha}A\gamma^m\right)^{\beta-\alpha}.
\end{align*}
If we can choose $m=m(n)$ so that $n/2^m\to\infty$ and $(n/2^m)^{\beta/\alpha}\left(2^{m/\alpha}\gamma^m\right)^{\beta-\alpha}\le 1$, then we get the desired bound.
Set $\theta=\frac{\beta}{\alpha\log(2\gamma^{\alpha-\beta})}$.
It is easy then to check that any integer from the interval
$(\theta\log n+2,\theta\log n+4)$ possesses the properties mentioned above.
\end{proof}

\subsection{Asymptotic behaviour of $\bbP(\tau_x>n)$}\label{sec:tau-asymp}
\begin{proof}[Proof of Theorem \ref{mainthm1}]
Let $m=m(n)\le n/2$. Later we shall impose further restrictions on this sequence. We split the cone $K$ into three subsets:
\begin{align*}
&D_1=\{z\in K:\delta (z)\le\varepsilon m^{1/\alpha},
|z|\le Am^{1/\alpha}\}, \\
&D_2=\{z\in K:\delta (z)>\varepsilon m^{1/\alpha},
|z|\le Am^{1/\alpha}\},\\
&D_3=\{z\in K:|z|> Am^{1/\alpha}\}.
\end{align*}
By the Markov property at time $m$,
\begin{align}
\label{eq:asymp1}
\nonumber
\bbP(\tau_x>n)
&=\int_K\bbP(x+S(m)\in dz,\tau_x>m)\bbP(\tau_z>n-m)\\
&=\left(\int_{D_1}+\int_{D_2}+\int_{D_3}\right)
\bbP(x+S(m)\in dz,\tau_x>m)\bbP(\tau_z>n-m).
\end{align}
We estimate above integrals separately.
Applying Lemma~\ref{lem:UB}, we have on $D_1$
$$
\bbP(\tau_z>n-m)\le CW(z+Rx_0)n^{-\beta/\alpha}
\le C\varepsilon^{\alpha/2}A^{\beta-\alpha/2}
\frac{m^{\beta/\alpha}}{n^{\beta/\alpha}};
$$
in the last step we have used the bound
$W(z+Rx_0)\le CM(z+Rx_0)$ and \eqref{eq:M-Michalik-low}.
Therefore,
$$
\int_{D_1}\bbP(x+S(m)\in dz,\tau_x>m)\bbP(\tau_z>n-m)
\le C\varepsilon^{\alpha/2}A^{\beta-\alpha/2}
\frac{m^{\beta/\alpha}}{n^{\beta/\alpha}}\bbP(\tau_x>m).
$$
Using once again Lemma~\ref{lem:UB}, we conclude that
for sufficiently large $R$
\begin{align}
\label{eq:asymp2}
\int_{D_1}\bbP(x+S(m)\in dz,\tau_x>m)\bbP(\tau_z>n-m)
\le C\varepsilon^{\alpha/2}A^{\beta-\alpha/2}
\frac{W(x+Rx_0)}{n^{\beta/\alpha}}.
\end{align}

Combining Lemma~\ref{lem:UB} and Lemma~\ref{lem:ETail}, we obtain
\begin{align}
\label{eq:asymp3}
\nonumber
&\int_{D_3}\bbP(x+S(m)\in dz,\tau_x>m)\bbP(\tau_z>n-m)\\
\nonumber
&\hspace{1cm}
\le Cn^{-\beta/\alpha}\int_{D_3}\bbP(x+S(m)\in dz,\tau_x>m)W(z+Rx_0)\\
\nonumber
&\hspace{1cm}=Cn^{-\beta/\alpha}
\bbE[W(x+Rx_0+S(m));\tau_x>m,|x+S(m)|>Am^{1/\alpha}]\\
&\hspace{1cm}\le C(x,R)A^{\beta-\alpha}n^{-\beta/\alpha}.
\end{align}
If we choose $m=m(n)$ so that $\frac{m(n)}{n}\to0$ sufficiently slow then, combining the functional limit theorem for $S(n)$ with Theorem 3.1 in \cite{MR3771748}, we conclude that, for every $R^\prime \geq 0$,
$$\bbP(\tau_z>n-m)\sim
\bbP_{zn^{-1/\alpha}}(\tau^Z_K>1)
\sim\varkappa M(zn^{-1/\alpha})
\sim\varkappa\frac{M(z+R^\prime x_0)}{n^{\beta/\alpha}},\quad \mathrm{as}\ n\to \infty,
$$
uniformly in $z\in D_2$.
Therefore,
\begin{align}
\label{eq:asymp4}
\nonumber
 &\int_{D_2}\bbP(x+S(m)\in dz,\tau_x>m)\bbP(\tau_z>n-m)\\
 &\hspace{1cm}=\frac{\varkappa+o(1)}{n^{\beta/\alpha}}
 \int_{D_2}\bbP(x+S(m)\in dz)M(z+ R^\prime  x_0).
\end{align}
We next notice that
\begin{align*}
&\int_{D_2}\bbP(x+S(m)\in dz,\tau_x>m)M(z+R^\prime  x_0)\\
&\hspace{0.5cm}=\bbE[M(x+ R^\prime  x_0+S(m));\tau_x>m]\\
&\hspace{1cm}-\int_{D_1}\bbP(x+S(m)\in dz,\tau_x>m)M(z+ R^\prime  x_0)\\
&\hspace{1.5cm}-\int_{D_3}\bbP(x+S(m)\in dz,\tau_x>m)M(z+ R^\prime  x_0).
\end{align*}
Recalling that $M(z+R^\prime x_0)\le M(z+Rx_0)\le W(z+Rx_0)$ for all $z\in K$, $R^\prime\le R$ and applying \eqref{eq:asymp3}, we conclude that
\begin{align}
\label{eq:asymp5}
\int_{D_3}\bbP(x+S(m)\in dz,\tau_x>m)M(z+ R^\prime  x_0)
\le C(x, R  )A^{\beta-\alpha}.
\end{align}
Furthermore, using \eqref{eq:M-Michalik-up} and applying then Lemma~\ref{lem:UB}, for sufficiently large $R$ we have
\begin{align}
\label{eq:asymp6}
\int_{D_1}\bbP(x+S(m)\in dz,\tau_x>m)M(z+R^\prime x_0)
\le C\varepsilon^{\alpha/2}A^{\beta-\alpha/2}W(x+R x_0).
\end{align}
Combining \eqref{eq:asymp1}---\eqref{eq:asymp6} we conclude that, for sufficiently large $R$ and for all $0\leq R^\prime  \leq R$,
\begin{align*}
&\left|n^{\beta/\alpha}\bbP(\tau_x>n)
-(\varkappa+o(1))\bbE[M(x+R^\prime x_0+S(m));\tau_x>m]\right|\\
&\hspace{1cm}\le C(x,R)A^{\beta-\alpha}
+C\varepsilon^{\alpha/2}A^{\beta-\alpha/2}W(x+Rx_0).
\end{align*}
Letting first $\varepsilon\to0$ and then $A\to\infty$, we conclude that, for all $0\leq R^\prime  \leq R$,
\begin{align}
\label{eq:interm.step}
 n^{\beta/\alpha}\bbP(\tau_x>n)
=(\varkappa+o(1))\bbE[M(x+R^\prime x_0+S(m));\tau_x>m]+o(1).
\end{align}

Taking first $R^\prime=R$ and using fact that $\bbE[M(x+Rx_0+S(m));\tau_x>m]$ converges to $V_R(x)$
(see Proposition \ref{harmonicfunction}), we have
$$
n^{\beta/\alpha}\bbP(\tau_x>n)\to V_R(x),\quad \mathrm{as}\ n\to \infty.
$$
Comparing this with \eqref{eq:interm.step}, we conclude that, for all $0\leq R^\prime  \leq R$,
$$
\bbE[M(x+  R^\prime x_0+S(m));\tau_x>m]\to V_R(x),\quad \mathrm{as}\ m\to \infty.
$$
This implies that
$$
V(x)=\lim_{n\to\infty}\bbE[M(x+S(n));\tau_x>n]
=V_R(x)
$$
and the limit does not depend on $R$.
In particular, by Proposition \ref{harmonicfunction} the function $V(x)$ is harmonic and
$$n^{\beta/\alpha}\bbP(\tau_x>n)\to V(x),\quad \mathrm{as}\ n\to \infty,
$$
for every fixed $x$.
Thus, the proof is complete.
\end{proof}

\section{Conditional  functional limit theorem}\label{sec:copnditionallimit}
\subsection{Existence of the meander.}\label{existencemeander}
Let $\widehat{\bbP}$ denote the distribution of the Doob $h$-transform of the process $Z$ killed at leaving $K$. It was shown in \cite[Theorem 3.3]{MR4415390} that $\widehat{\bbP}_x$ converges, as $x\to0$, to a probability measure $\widehat{\bbP}_0$ in the Skorohod space. We now transfer this into convergence of conditional distributions.

Let $g$ be a bounded continuous functional on $D[0,1$]. Without loss of generality we may assume that $g$ takes nonnegative values only.

It is immediate from the definition of the Doob transform that
$$
\bbE_x[g(Z);\tau^Z_K>1]
=M(x)\widehat{\bbE}_x\left[\frac{g(Z)}{M(Z(1))}\right].
$$
Consequently,
$$
\bbE_x[g(Z)\mid \tau^Z_K>1]
=\frac{M(x)}{\bbP_x(\tau^Z_K>1)}
\widehat{\bbE}_x\left[\frac{g(Z)}{M(Z(1))}\right].
$$
Applying Theorem 3.1 from \cite{MR3771748}, we have
\begin{equation*}\label{limitmeander}
\bbE_x[g(Z) \mid \tau^Z_K>1]
\sim\varkappa^{-1}
\widehat{\bbE}_x\left[\frac{g(Z)}{M(Z(1))}\right],
\quad\text{as }x\to0.
\end{equation*}
Thus, it remains to show that $\widehat{\bbE}_x\left[\frac{g(Z)}{M(Z(1))}\right]$ converges.
Since the functional $\frac{g(Z)}{M(Z(1))}$ is not bounded we cannot apply the convergence of $\widehat{\bbP}_x$ directly. Fix some $\varepsilon>0$. Then, combining the weak convergence of $\widehat{\bbP}_x$ with the absolute continuity of $Z(1)$ under $\widehat{\bbP}_0$, we conclude that
$$
\widehat{\bbE}_x\left[\frac{g(Z)}{M(Z(1))}; |Z(1)|>\varepsilon\right]\to
\widehat{\bbE}_0\left[\frac{g(Z)}{M(Z(1))}; |Z(1)|>\varepsilon\right],\quad \mathrm{as}\ x\to 0.
$$
Furthermore, by Theorem 3.3 from \cite{MR3771748},
\begin{align*}
\widehat{\bbE}_x\left[\frac{g(Z)}{M(Z(1))}; |Z(1)|\le\varepsilon\right]
&=\frac{1}{M(x)}\bbE_x[g(Z);|Z(1)|\le\varepsilon,\tau^Z_K>1]\\
&\le \frac{C_g}{M(x)}\bbP_x(|Z(1)|\le\varepsilon,\tau^Z_K>1)\\
&\to C_g\varkappa\int_{|y|\le\varepsilon}n_1(y) \ud y,\quad\text{as }x\to0,
\end{align*}
where the limit
  \begin{equation*}
    \label{e:blimit}
    n_1(y) = \lim_{K \ni x \to 0} \frac{p_K(1,x,y)}{\bbP(\tau_x > 1)}
  \end{equation*}
exists and it is a finite and continuous in $y$
for the heat kernel $p_K(t,x,y)$ of the process $Z$ killed on
leaving cone $K$ and defined formally in \eqref{eq:killed-heat-kernel}.

Moreover, by the definition of $\widehat{\bbP}_0$,
\begin{align*}
\widehat{\bbE}_0\left[\frac{g(Z)}{M(Z(1))}; |Z(1)|\le\varepsilon\right]
\le C_g \varkappa\int_{|y|\le\varepsilon}n_1(y)\ud y.
\end{align*}
Using (3.8) from \cite{MR3771748} it is easy to see that $\int_{|y|\le\varepsilon}n_1(y)dy$ converges to zero as $\varepsilon\to0$. This gives the desired convergence:
$$
\bbE_x[g(Z) \mid \tau^Z_K>1]
\to \varkappa^{-1}\widehat{\bbE}_0\left[\frac{g(Z)}{M(Z(1))}\right],\quad \text{as }x\to0,
$$
for every continuous bounded $g$.
Having above limiting law well-defined,
we can define the corresponding stochastic process $me_K(t)$ via Kolmogorov Existence Theorem.

\subsection{Proof of Theorem \ref{mainthm2}.}
As we have just proved the existence of the meander $me_K(t)$ we are left with proving the weak convergence.
Fix some $\eta>0$ and consider a bounded continuous functional $\varphi$ on the space $D[\eta,1]$. Without loss of generality we may assume that $\varphi$ takes values in the interval $[0,1]$.
Let $m=m(n)$ be the sequence which has been used in Subsection \ref{sec:tau-asymp}. By the Markov property
at time $m$,
\begin{align}
\label{eq:flt1}
\nonumber
&\bbE\left[\varphi\left(\frac{x+S(nt)}{n^{1/\alpha}}\right);\tau_x>n\right]\\
&=\int_K\bbP(x+S(m)\in dz,\tau_x>m)
\bbE\left[\varphi\left(\frac{z+S((nt-m)^+)}{n^{1/\alpha}}\right);\tau_z>n-m\right].
\end{align}
We shall use the same decomposition as in Subsection \ref{sec:tau-asymp}. Since $\varphi$ is bounded by $1$, we can use
\eqref{eq:asymp2} and \eqref{eq:asymp3} to get
\begin{align}
\label{eq:flt2}
\nonumber
&\int_{D_1}\bbP(x+S(m)\in dz,\tau_x>m)
\bbE\left[\varphi\left(\frac{z+S((nt-m)^+)}{n^{1/\alpha}}\right);\tau_z>n-m\right]\\
\nonumber
&\hspace{1cm}\le \int_{D_1}\bbP(x+S(m)\in dz,\tau_x>m)\bbP(\tau_z>n-m)\\
&\hspace{1cm}
\le C\varepsilon^{\alpha/2}A^{\beta-\alpha/2}
\frac{W(x+Rx_0)}{n^{\beta/\alpha}}
\end{align}
and
\begin{align}
\label{eq:flt3}
\nonumber
&\int_{D_3}\bbP(x+S(m)\in dz,\tau_x>m)
\bbE\left[\varphi\left(\frac{z+S((nt-m)^+)}{n^{1/\alpha}}\right);\tau_z>n-m\right]\\
\nonumber
&\hspace{1cm}\le
\int_{D_3}\bbP(x+S(m)\in dz,\tau_x>m)\bbP(\tau_z>n-m)\\
&\hspace{1cm}\le C(x,R)A^{\beta-\alpha}n^{-\beta/\alpha}.
\end{align}
On the set $D_2$ we may apply the functional limit theorem for $S(nt)/n^{1/\alpha}$ to conclude that
\begin{align*}
 \bbE\left[\varphi\left(\frac{z+S((nt-m)^+)}{n^{1/\alpha}}\right);\tau_z>n-m\right]
 =(1+o(1))\bbE_{z/n^{1/\alpha}}\left[\varphi(Z);\tau>1\right]
\end{align*}
uniformly in $z\in D_2$.
Using the aruments from the previous subsection, we infer that, uniformly in $z\in D_2$,
$$
\bbE_{z/n^{1/\alpha}}\left[\varphi(Z)|\tau>1\right]
\sim \bbE_0[\varphi(me_K)].
$$
As a result we have
\begin{align*}
 \bbE\left[\varphi\left(\frac{z+S((nt-m)^+)}{n^{1/\alpha}}\right);\tau_z>n-m\right]
 =(\varkappa+o(1))\bbE_0[\varphi(me_K)]\frac{M(z)}{n^{\beta/\alpha}}.
\end{align*}
Consequently,
\begin{align}
\label{eq:flt4}
\nonumber
&\int_{D_2}\bbP(x+S(m)\in dz,\tau_x>m)\,
\bbE\left[\varphi\left(\frac{z+S((nt-m)^+)}{n^{1/\alpha}}\right);\tau_z>n-m\right]\\
&\hspace{1cm}
=(\varkappa+o(1))\bbE_0[\varphi(me_K)]\, n^{-\beta/\alpha}
\int_{D_2}\bbP(x+S(m)\in dz,\tau_x>m)M(x+Rx_0).
\end{align}
Combining \eqref{eq:flt1}---\eqref{eq:flt4} and repeating the arguments from the previous subsection, we conclude that
\begin{align*}
&\bbE\left[\varphi\left(\frac{x+S(nt)}{n^{1/\alpha}}\right);\tau_x>n\right]=
(\varkappa+o(1))V_R(x)\bbE_0[\varphi(me_K)]\, n^{-\beta/\alpha}.
\end{align*}
Clearly, this gives the convergence of conditional expectations.
Thus, the proof of the functional convergence on $D[\eta,1]$ is complete. Since we have this for every $\eta>0$, we have convergence of finite dimensional distribution on the whole interval $[0,1]$ and the tightness on every interval $[\eta,1]$.
Thus, it remains to prove the tightness at zero. But this property will follow if we show that
\begin{equation}
\label{eq:flt5}
\bbP\left(\widehat{S}(n)>An^{1/\alpha}\mid \tau_x>n\right)\le \frac{C}{A^{\alpha-\beta}},
\end{equation}
where $\widehat{S}(n)=\max_{k\le n}|x+S(k)|$.

Similar to the proof of Lemma~\ref{lem:ETail}, we have for every $\gamma<1$ the inequality
\begin{align}
\label{eq:flt6}
\nonumber
&\bbP\left(\widehat{S}(n)>An^{1/\alpha},\tau_x>n\right)\\
\nonumber
&\hspace{1cm}=\bbP\left(\widehat{S}(n)>An^{1/\alpha},
\widehat{S}(n/2)>\gamma An^{1/\alpha},\tau_x>n\right)\\
\nonumber
&\hspace{2cm}+\bbP\left(\widehat{S}(n)>An^{1/\alpha},\widehat{S}(n/2)\le\gamma An^{1/\alpha},\tau_x>n\right)\\
\nonumber
&\hspace{1cm}\le\bbP\left(\widehat{S}(n/2)>\gamma An^{1/\alpha},\tau_x>n\right)\\
&\hspace{2cm}+\bbP\left(\max_{k\in[n/2,n]}|S(k)-S(n/2)|>(1-\gamma)An^{1/\alpha},\tau_x>n/2\right).
\end{align}
Applying Lemma~\ref{lem:UB} and the asymptotic stability of $S(n)$
we have
\begin{align}
\label{eq:flt7}
\nonumber
&\bbP\left( \max_{k\in[n/2,n]}|S(k)-S(n/2)|>(1-\gamma)An^{1/\alpha},\tau_x>n/2\right)\\
\nonumber
&\hspace{1cm}=\bbP\left(\max_{k\in[n/2,n]}|S(k)-S(n/2)|>(1-\gamma)An^{1/\alpha}\right)\bbP(\tau_x>n/2)\\
&\hspace{1cm}\le \frac{C}{A^{\alpha}}\frac{W(x+Rx_0)}{n^{\beta/\alpha}}.
\end{align}
Furthermore, using the Markov property and applying once again Lemma~\ref{lem:UB}, we have
\begin{align*}
&\bbP\left( \widehat{S}(n/2)>\gamma An^{1/\alpha},\tau_x>n\right) \\
&\hspace{1cm}\le\frac{C}{n^{\beta/\alpha}}
\bbE\left[W(x+Rx_0+S(n/2));\widehat{S}(n/2)>\gamma An^{1/\alpha}\right].
\end{align*}
Taking into account Lemma~\ref{lem:ETail}, we finally get
\begin{align}
\label{eq:flt8}
\bbP\left(\widehat{S}(n/2)>\gamma An^{1/\alpha},\tau_x>n\right)
\le \frac{C}{A^{\alpha-\beta}}\frac{W(x+Rx_0)}{n^{\beta/\alpha}}.
\end{align}
Plugging \eqref{eq:flt7} and \eqref{eq:flt8} into \eqref{eq:flt6} and recalling that
$\bbP(\tau_x>n)\sim\varkappa V(x)n^{-\beta/\alpha}$, we obtain \eqref{eq:flt5}.

\bibliographystyle{abbrv}
\bibliography{stable-rw-cones-biblio}

\begin{thebibliography}{10}

\bibitem{MR3780696}
F.~Aurzada, N.~Guillotin-Plantard, and F.~c. P\`ene.
\newblock Persistence probabilities for stationary increment processes.
\newblock {\em Stochastic Process. Appl.}, 128(5):1750--1771, 2018.

\bibitem{MR3468226}
F.~Aurzada and T.~Simon.
\newblock Persistence probabilities and exponents.
\newblock In {\em L\'evy matters. {V}}, volume 2149 of {\em Lecture Notes in
  Math.}, pages 183--224. Springer, Cham, 2015.

\bibitem{Banuelos-Bogdan-2004}
R.~Ba\~{n}uelos and K.~Bogdan.
\newblock Symmetric stable processes in cones.
\newblock {\em Potential Anal.}, 21(3):263--288, 2004.

\bibitem{MR1465162}
R.~Ba\~nuelos and R.~G. Smits.
\newblock Brownian motion in cones.
\newblock {\em Probab. Theory Related Fields}, 108(3):299--319, 1997.

\bibitem{MR2176549}
P.~Biane, P.~Bougerol, and N.~O'Connell.
\newblock Littelmann paths and {B}rownian paths.
\newblock {\em Duke Math. J.}, 130(1):127--167, 2005.

\bibitem{Bogdan-Kulczycki-Nowak-2002}
K.~Bogdan, T.~Kulczycki, and A.~Nowak.
\newblock Gradient estimates for harmonic and {$q$}-harmonic functions of
  symmetric stable processes.
\newblock {\em Illinois J. Math.}, 46(2):541--556, 2002.

\bibitem{MR3771748}
K.~Bogdan, Z.~Palmowski, and L.~Wang.
\newblock Yaglom limit for stable processes in cones.
\newblock {\em Electron. J. Probab.}, 23:Paper No. 11, 19, 2018.

\bibitem{Bolthausen}
E.~Bolthausen.
\newblock On a functional central limit theorem for random walks conditioned to
  stay positive.
\newblock {\em Ann. Probab.}, 4:480--485, 1976.

\bibitem{Caravenna}
F.~Caravenna and L.~Chaumont.
\newblock Invariance principles for random walks conditioned to stay positive.
\newblock {\em Ann. Inst. Henri Poincar{\'e}, Probab. Stat.}, 44(1):170--190,
  2008.

\bibitem{MR1202035}
G.~Christoph and W.~Wolf.
\newblock {\em Convergence theorems with a stable limit law}, volume~70 of {\em
  Mathematical Research}.
\newblock Akademie-Verlag, Berlin, 1992.

\bibitem{MR0863716}
R.~D. DeBlassie.
\newblock Exit times from cones in {${\bf R}^n$} of {B}rownian motion.
\newblock {\em Probab. Theory Related Fields}, 74(1):1--29, 1987.

\bibitem{DenisovWachtel10}
D.~Denisov and V.~Wachtel.
\newblock {Conditional Limit Theorems for Ordered Random Walks}.
\newblock {\em Electronic Journal of Probability}, 15:292 -- 322, 2010.

\bibitem{MR3342657}
D.~Denisov and V.~Wachtel.
\newblock Random walks in cones.
\newblock {\em Ann. Probab.}, 43(3):992--1044, 2015.

\bibitem{DenisovWachtel19}
D.~Denisov and V.~Wachtel.
\newblock Alternative constructions of a harmonic function for a random walk in
  a cone.
\newblock {\em Elec. J. Probab.}, 24:1--26, 2019.

\bibitem{DenisovWachtel24}
D.~Denisov and V.~Wachtel.
\newblock Green function for an asymptotically stable random walk in a half
  space.
\newblock {\em Journal of Theoretical Probability}, 37(2):1745--1786, 2024.

\bibitem{MR4718377}
D.~Denisov and V.~Wachtel.
\newblock Random walks in cones revisited.
\newblock {\em Ann. Inst. Henri Poincar\'e{} Probab. Stat.}, 60(1):126--166,
  2024.

\bibitem{DenisovZhang23}
D.~Denisov and K.~Zhang.
\newblock Markov chains in the domain of attraction of {B}rownian motion in
  cones.
\newblock {\em arXiv:2309.16311}, 2023.

\bibitem{Doney-conditional}
R.~A. Doney.
\newblock Conditional limit theorems for asymptotically stable random walks.
\newblock {\em Z. Wahrscheinlichkeitstheor. Verw. Geb.}, 70:351--360, 1985.

\bibitem{MR3283611}
J.~Duraj.
\newblock On harmonic functions of killed random walks in convex cones.
\newblock {\em Electron. Commun. Probab.}, 19:no. 80, 10, 2014.

\bibitem{MR3163211}
J.~Duraj.
\newblock Random walks in cones: the case of nonzero drift.
\newblock {\em Stochastic Process. Appl.}, 124(4):1503--1518, 2014.

\bibitem{MR4443298}
J.~Duraj, K.~Raschel, P.~Tarrago, and V.~Wachtel.
\newblock Martin boundary of random walks in convex cones.
\newblock {\em Ann. H. Lebesgue}, 5:559--609, 2022.

\bibitem{MR4102254}
J.~Duraj and V.~Wachtel.
\newblock Invariance principles for random walks in cones.
\newblock {\em Stochastic Process. Appl.}, 130(7):3920--3942, 2020.

\bibitem{MR3512425}
R.~Garbit and K.~Raschel.
\newblock On the exit time from a cone for random walks with drift.
\newblock {\em Rev. Mat. Iberoam.}, 32(2):511--532, 2016.

\bibitem{MR4636896}
R.~Garbit and K.~Raschel.
\newblock Random walks with drift inside a pyramid: convergence rate for the
  survival probability.
\newblock {\em ALEA Lat. Am. J. Probab. Math. Stat.}, 20(2):973--988, 2023.

\bibitem{MR1678525}
D.~J. Grabiner.
\newblock Brownian motion in a {W}eyl chamber, non-colliding particles, and
  random matrices.
\newblock {\em Ann. Inst. H. Poincar\'e{} Probab. Statist.}, 35(2):177--204,
  1999.

\bibitem{MR2971725}
B.~Haas and V.~Rivero.
\newblock Quasi-stationary distributions and {Y}aglom limits of self-similar
  {M}arkov processes.
\newblock {\em Stochastic Process. Appl.}, 122(12):4054--4095, 2012.

\bibitem{Iglehart}
D.~L. Iglehart.
\newblock Functional central limit theorems for random walks conditioned to
  stay positive.
\newblock {\em Ann. Probab.}, 2:608--619, 1974.

\bibitem{MR2674995}
I.~Ignatiouk-Robert and C.~Loree.
\newblock Martin boundary of a killed random walk on a quadrant.
\newblock {\em Ann. Probab.}, 38(3):1106--1142, 2010.

\bibitem{MR2248228}
A.~E. Kyprianou and Z.~Palmowski.
\newblock Quasi-stationary distributions for {L}\'evy processes.
\newblock {\em Bernoulli}, 12(4):571--581, 2006.

\bibitem{MR4415390}
A.~E. Kyprianou, V.~Rivero, and W.~Satitkanitkul.
\newblock Stable {L}\'evy processes in a cone.
\newblock {\em Ann. Inst. Henri Poincar\'e{} Probab. Stat.}, 57(4):2066--2099,
  2021.

\bibitem{MR1303922}
S.~Mart\'inez and J.~San~Mart\'in.
\newblock Quasi-stationary distributions for a {B}rownian motion with drift and
  associated limit laws.
\newblock {\em J. Appl. Probab.}, 31(4):911--920, 1994.

\bibitem{Michalik-2006}
K.~Michalik.
\newblock Sharp estimates of the {G}reen function, the {P}oisson kernel and the
  {M}artin kernel of cones for symmetric stable processes.
\newblock {\em Hiroshima Math. J.}, 36(1):1--21, 2006.

\bibitem{MR4329614}
Z.~Palmowski and L.~Wang.
\newblock On the exact asymptotics of exit time from a cone of an isotropic
  {$\alpha$}-self-similar {M}arkov process with a skew-product structure.
\newblock {\em Probab. Math. Statist.}, 41(1):25--38, 2021.

\bibitem{MR2438702}
Z.~Pucha\l~a and T.~Rolski.
\newblock The exact asymptotic of the collision time tail distribution for
  independent {B}rownian particles with different drifts.
\newblock {\em Probab. Theory Related Fields}, 142(3-4):595--617, 2008.

\bibitem{MR4243159}
K.~Raschel and P.~Tarrago.
\newblock Boundary behavior of random walks in cones.
\newblock {\em Markov Process. Related Fields}, 26(4):711--756, 2020.

\bibitem{MR1137264}
M.~Shimura.
\newblock A limit theorem for two-dimensional random walk conditioned to stay
  in a cone.
\newblock {\em Yokohama Math. J.}, 39(1):21--36, 1991.

\bibitem{MR1643806}
N.~T. Varopoulos.
\newblock Potential theory in conical domains.
\newblock {\em Math. Proc. Cambridge Philos. Soc.}, 125(2):335--384, 1999.

\bibitem{MR3315616}
V.~Vysotsky.
\newblock Limit theorems for random walks that avoid bounded sets, with
  applications to the largest gap problem.
\newblock {\em Stochastic Process. Appl.}, 125(5):1886--1910, 2015.

\bibitem{Williamson68}
J.~A. Williamson.
\newblock {Random walks and Riesz kernels.}
\newblock {\em Pacific Journal of Mathematics}, 25(2):393 -- 415, 1968.

\end{thebibliography}

\end{document}